\def\dom{\mathrm{dom}}
\DeclareMathOperator*{\esssup}{ess\,sup}
\begin{document}

\title{Maximum Lebesgue Extension of Monotone Convex Functions}
\runtitle{Maximum Lebesgue Extension}
\author{Keita Owari}%
\runauthor{K. Owari}

 
\address{Graduate School of Economics, The University of Tokyo\newline
  7-3-1 Hongo, Bunkyo-ku, Tokyo 113-0033, Japan}

\email{owari@e.u-tokyo.ac.jp}

\keyWords{Monotone Convex Functions, Lebesgue Property,
  Order-Continuity, Order-Continuous Banach Lattices, Uniform
  Integrability, Convex Risk Measures}

\FrstVer{30 Apr. 2013, Accepted: 8 Jan.
  2014}

\ToAppear{Journal of Functional Analysis}
\DOI{10.1016/j.jfa.2014.01.002}


\abstract{%
  Given a monotone convex function on the space of essentially bounded
  random variables with the Lebesgue property (order continuity), we
  consider its extension preserving the Lebesgue property to as big
  solid vector space of random variables as possible. We show that
  there exists a maximum such extension, with explicit construction,
  where the maximum domain of extension is obtained as a (possibly
  proper) subspace of a natural Orlicz-type space, characterized by a
  certain uniform integrability property. As an application, we
  provide a characterization of the Lebesgue property of monotone
  convex function on arbitrary solid spaces of random variables in
  terms of uniform integrability and a ``nice'' dual representation of
  the function.
}
\maketitle

\section{Introduction}
\label{sec:Intro}


Motivated by the study of convex risk measures in financial
mathematics, we address a ``regular'' extension problem of monotone
convex functions.  Let $L^0$ be the space of all finite random
variables (measurable functions) on a given probability space
$(\Omega,\FC,\PB)$ modulo $\PB$-almost sure (a.s.)  equality, and we
say that a linear subspace $\Xs\subset L^0$ is solid if $X\in \Xs$ and
$|Y|\leq |X|$ a.s. imply $Y\in \Xs$. By a monotone convex function on
a solid space $\Xs\subset L^0$, we mean a convex function
$\varphi:\Xs\rightarrow (-\infty,\infty]$ which is monotone increasing
w.r.t. the a.s. pointwise order.

We are interested in monotone convex functions on some solid space
$\Xs$ having the following regularity property called the
\emph{Lebesgue property}: for any sequence $(X_n)_n\subset \Xs$,
\begin{equation}
  \label{eq:LebesgueIntro1}
  \exists Y\in \Xs,\,|X_n|\leq Y\, (\forall n)\text{  and }X_n\rightarrow X\in \Xs\text{ a.s. }
  \Rightarrow \varphi(X)=\lim_n\varphi(X_n).
\end{equation}
Note that all $L^p$ spaces are solid, and when
$\Xs=L^1:=L^1(\Omega,\FC,\PB)$ and $\varphi(X)=\EB[X]$, this is
nothing but the dominated convergence theorem. When $\Xs=L^\infty$,
(\ref{eq:LebesgueIntro1}) reduces to
\begin{equation}
  \label{eq:LebesgueIntroLinfty}
  \sup_n\|X_n\|_\infty<\infty 
  \text{ and }X_n\rightarrow X\text{ a.s. }
  \Rightarrow\, \varphi(X)=\lim_n \varphi(X_n),
\end{equation}
and a number of practically important monotone convex functions on
$L^\infty$ satisfy this. 

Now given a monotone convex function $\varphi_0$ on $L^\infty$ with
the Lebesgue property (\ref{eq:LebesgueIntroLinfty}), we consider its
extension to some big solid space \emph{preserving the Lebesgue
  property} in the form of (\ref{eq:LebesgueIntro1}) (such extensions
do make sense). Of course there may be several such extensions, but we
are interested in the maximum one. So the central question of the
paper is:
\begin{question}
  \label{ques:1}
  Given a monotone convex function $\varphi_0$ on $L^\infty$ with the
  Lebesgue property (\ref{eq:LebesgueIntroLinfty}), does there exist a
  maximum extension preserving the Lebesgue property in the sense of
  (\ref{eq:LebesgueIntro1})? i.e., is there a pair
  $(\hat\varphi,\widehat\Xs)$ of a solid space $\widehat\Xs\subset L^0$ and
  a monotone convex function $\hat\varphi$ with the Lebesgue property
  on $\widehat\Xs$ such that $\hat\varphi|_{L^\infty}=\varphi_0$ and for
  any such pair $(\varphi,\Xs)$, one has $\Xs\subset \widehat\Xs$ and
  $\varphi=\hat\varphi|_\Xs$?
\end{question}

As a first (trivial) example, we briefly see what happens when
$\varphi_0$ is linear.
\begin{example}
  \label{ex:PosLin}
  Let $\varphi_0$ be a \emph{positive} (monotone) linear functional on
  $L^\infty$.  Then it is finite-valued and identified with a
  \emph{finitely additive} measure $\nu_0(A):=\varphi_0(\mathds1_A)$
  as $\varphi_0(X)=\int_\Omega Xd\nu_0$, while
  (\ref{eq:LebesgueIntroLinfty}) is equivalent to saying that $\nu_0$
  is $\sigma$-additive. If the latter is the case, the ``usual''
  integral $\hat\varphi(X):=\int_\Omega Xd\nu_0$ defines a
  Lebesgue-preserving extension of $\varphi_0$ to
  $\LC^1(\nu_0):=\{X\in L^0:\, \int_\Omega|X|d\nu_0<\infty\}$. On the
  other hand, if $\varphi$ is a monotone convex function on a solid
  space $\Xs\subset L^0$ with (\ref{eq:LebesgueIntro1}) and
  $\varphi|_{L^\infty}=\varphi_0$, it is easy that $\varphi$ must be
  positive, linear and finite on $\Xs$.  Then $\int
  |X|d\nu_0=\lim_n\hat\varphi(|X|\wedge n)=\lim_n\varphi_0(|X|\wedge
  n)=\lim_n\varphi(|X|\wedge n)=\varphi(|X|)<\infty$ if $X\in\Xs$,
  hence $\Xs\subset \LC^1(\nu_0)$, where the first equality follows
  from the monotone convergence theorem, and the fourth from the
  Lebesgue property of $\varphi$ on $\Xs$. Similarly, but with
  $X\mathds1_{\{|X|\leq n\}}$ instead of $|X|\wedge n$, we see also
  that $\varphi=\hat\varphi|_\Xs$. Namely,
  $(\hat\varphi,\LC^1(\nu_0))$ is the maximum Lebesgue-preserving
  extension of $\varphi_0$.
\end{example}

This is just an exercise of measure theory, and we see that
Question~\ref{ques:1} is well-posed at least when $\varphi_0$ is
linear. Slight surprisingly, the main result
(Theorem~\ref{thm:MainLebExt1}) of this paper states that the answer
to Question~\ref{ques:1} is YES as long as the original function
$\varphi_0$ is \emph{finite everywhere} on $L^\infty$ (this is
automatic when $\varphi$ is linear by definition). Moreover, the
maximum extension $(\hat\varphi,\widehat\Xs)$ is \emph{explicitly
  constructed}.

We first construct a candidate of $\hat\varphi$ in a rather ad-hoc way
on a certain convex cone of $L^0$ containing $L^\infty$ and the
positive cone $L^0_+$. Then based on a simple observation
(Lemma~\ref{lem:Observation2}), we introduce an \emph{Orlicz-type
  space} associated to $\hat\varphi$, that we denote by
$M^{\hat\varphi}_u$, beyond which Lebesgue-preserving extension is not
possible. After checking that the candidate $\hat\varphi$ is
well-defined on this space as a \emph{finite} monotone convex
function, we finally verify that the space $M^{\hat\varphi}_u$ can be
made into an \emph{order-continuous} Banach lattice with respect to a
natural gauge norm (Theorem~\ref{thm:MphiUOrderContiBanach}) with a
suitable change of measure, which together with an extended
Namioka-Klee theorem by \citep{MR2648595} eventually yields that
$\hat\varphi$ is Lebesgue on $M^{\hat\varphi}_u$ and the pair
$(\hat\varphi,M^{\hat\varphi}_u)$ is the desired maximum extension.
The space $M^{\hat\varphi}_u$ is, as the notation suggests, a subspace
of the ``Orlicz heart'' $M^{\hat\varphi}$ of $\hat\varphi$, and the
subscript ``$u$'' stands for the ``uniform integrability'' that
characterizes the elements of $M^{\hat\varphi}_u$. This point will be
made clear in Theorem~\ref{thm:MuCharact}.

As an application, we provide a characterization of the Lebesgue
property of finite monotone convex functions $\psi$ on an arbitrary
solid space of random variables of the form \emph{Fatou property plus
  ``something extra''}, with the ``extra'' being either a certain
``uniform integrability'' or a ``good'' dual representation of $\psi$,
both of which are stated using the conjugate of $\psi|_{L^\infty}$
(Theorem~\ref{thm:JSTGeneral}). This generalizes a result known as the
\emph{Jouini-Schachermayer-Touzi} theorem
\citep{jouini06:_law_fatou}. There the comparison of a function $\psi$
on a solid space $\Xs$ and the maximum Lebesgue-preserving extension
of the restriction $\psi|_{L^\infty}$ plays a key role.

\subsection{A Motivation from Financial Mathematics: Convex Risk Measures}
\label{sec:Question}

An initial motivation of this work was to provide an ``efficient'' way
to the study of convex risk measures for unbounded risks. In
mathematical finance, a \emph{convex risk measure} on a solid space
$\Xs\subset L^0$ is---up to a change of sign---a monotone convex
function $\rho$ on $\Xs$ such that $\rho(X+c)=\rho(X)+c$ whenever $c$
is a constant (cash-invariance).  This notion was introduced by
\citep{MR1850791,MR1932379, FrittelliRosazza20021473} as a possible
replacement of \emph{Value at Risk}. See \citep[][Ch.~4]{MR2779313}
for the background of this notion.  Since then, convex risk measures
on $L^\infty$ (i.e.  for bounded risks) have been extensively studied,
establishing a number of their fine properties as well as examples
\citep[see e.g.][]{delbaen12:_monet_utilit_funct,MR2779313}. However,
$L^\infty$ is clearly too small to capture the actual risks, and a key
current direction is the analysis of risk measures \emph{beyond
  bounded risks}.  A natural way is to pick up a particular space, and
then to reconstruct a whole theory with careful analysis of the
structure of the new space, e.g., $L^p$
\citep{MR3035989,MR2507760}, Orlicz spaces/hearts
\citep{MR2509268, orihuela_ruiz12:_lebes_orlic,MR2659479,MR2756015},
abstract locally convex Fréchet lattices \citep{MR2648595}, and $L^0$
\citep{MR2823052} to mention a few.

On the other hand, it seems more efficient to \emph{extend} a convex
risk measure originally defined on $L^\infty$ to some big space, and a
most natural candidate seems the one preserving the Lebesgue
property. Note first that the Lebesgue property of the original risk
measure on $L^\infty$ is reasonable, since (modulo some technicality)
it is necessary to have a finite valued extension to some solid space
properly containing $L^\infty$ (\citep[Theorem~10]{MR2509290}; see the
paragraph after Theorem~\ref{thm:JSTLinfty} for detail).  Next, the
Lebesgue property implies or is equivalent to some other important
properties in application: existence of \emph{$\sigma$-additive
  subgradient}, the inf-compactness of the conjugate, the continuity
for the Mackey topology induced by the good dual space and so on
(\citep{jouini06:_law_fatou}, \citep{MR2648597} and comments after
Theorem~\ref{thm:JSTGeneral} for precise information).  Also,
functions with the Lebesgue property are stable for the practically
common procedure of approximating unbounded random variables by
suitable ``truncation'', and a ``nearly'' converse implication is also
true (Remark~\ref{rem:Truncation}).  This is computationally useful,
and it also means roughly that an extension preserving the Lebesgue
property retains the basic structure of the original function to the
extended domain.

Several other types of extensions may be possible of course, and some
of those have already appeared in literature (see
Section~\ref{sec:Other}). Especially, \citep{MR2943186} considered an
extension preserving the \emph{Fatou property} (order lower
semicontinuity), proving that any \emph{law-invariant} convex risk
measure with the Fatou property on $L^\infty$ is uniquely extended to
$L^1$ preserving the Fatou property.  In contrast, a simple example
shows that Lebesgue-preserving extension to $L^1$ or to some
``common'' reasonable space is not possible even if the original
function is law-invariant (see Example~\ref{ex:Modular} and discussion
that precedes).  Thus it is worthwhile to ask how far a convex risk
measure originally defined on $L^\infty$ with the Lebesgue property
can be extended preserving the Lebesgue property, or more intuitively,
how far a ``good'' risk measure can remain ``good''.  In
Section~\ref{sec:ConvRiskFunc}, we shall examine our main results in
the context of convex risk measures with some concrete examples.

\section{Preliminaries}
\label{sec:MonConvSolid}

We use the probabilistic notation. Let $(\Omega,\FC,\PB)$ be a
probability space which will be fixed throughout, and
$L^0:=L^0(\Omega,\FC,\PB)$ denotes the space of all equivalence
classes of measurable functions (or random variables) over
$(\Omega,\FC,\PB)$ modulo $\PB$-almost sure (a.s.) equality. As usual,
we do not distinguish an element of $L^0$ and its representatives, and
inequalities between (classes of) measurable functions are to be
understood in the a.s. sense, i.e., $X\leq Y$ a.s. which means more
precisely that $f\leq g$ a.s. for any representatives $f$ and $g$ of
$X$ and $Y$, respectively. This a.s. pointwise inequality defines a
partial order on $L^0$ by which $L^0$ is an \emph{order-complete}
Riesz space (vector lattice) with the \emph{countable-sup property}.
By a \emph{solid space} $\Xs$, we mean, in this paper, a \emph{solid
  vector subspace} (ideal) $\Xs$ of $L^0$, i.e., a vector subspace of
$L^0$ such that $|X|\leq |Y|$ and $Y\in \Xs$ imply $X\in \Xs$
(solid). Note that any such $\Xs$ is an order complete Riesz space
with the countable sup-property on its own right, and $\Xs$ contains
$L^\infty:=L^\infty(\Omega,\FC,\PB)$ as soon as it contains the
constants. We denote $\Xs_+:=\{X\in \Xs:\, X\geq 0\}$ (the positive
cone). Finally, we write $\EB[X]=\int_\Omega X(\omega)\PB(d\omega)$
(expectation w.r.t. $\PB$) for $X\in L^0$ as long as the integral
makes sense, and $\EB_Q[X]=\int_\Omega X(\omega)Q(d\omega)$ for other
probability measures $Q\ll\PB$.

By a \emph{monotone} convex function on a solid space $\Xs\subset
L^0$, we mean a proper convex function
$\varphi:\Xs\rightarrow(-\infty,\infty]$ which is monotone
\emph{increasing} in the a.s. order:
\begin{equation}
  \label{eq:Monotone}
  \forall X,Y\in\Xs,\,   X\leq Y \,(\text{a.s.})\,\Rightarrow\, \varphi(X)\leq \varphi(Y).
\end{equation}

\begin{definition}
  \label{dfn:Regularity}
  For a monotone convex function $\varphi$ on a solid space
  $\Xs\subset L^0$, we say that
\begin{enumerate}
\item $\varphi$ satisfies the \emph{Fatou property} (or simply
  $\varphi$ is Fatou) if for any $(X_n)_n\subset \Xs$,
  \begin{equation}
    \label{eq:FatouX2}
    \exists Y\in \Xs_+\text{ such that }
    |X_n|\leq Y,\,\forall n\text{ and }X_n\rightarrow X\text{ a.s. }
    \Rightarrow\,\varphi(X)\leq\liminf_n\varphi(X_n).
  \end{equation}
\item $\varphi$ satisfies the \emph{Lebesgue property} (or $\varphi$
  is Lebesgue) if for any $(X_n)_n\subset \Xs$,
  \begin{equation}
    \label{eq:LebX2}
    \exists Y\in \Xs_+\text{ such that } |X_n|\leq Y,\,\forall n\text{ and }X_n\rightarrow X\text{ a.s. }\Rightarrow\,\varphi(X)=\lim_n\varphi(X_n).
  \end{equation}
\end{enumerate}
\end{definition}

\begin{remark}[Lebesgue property and order-continuity]
  \label{rem:Lebesgue-OrderConti}
  By the countable-sup property of $\Xs$ (as a solid vector subspace
  (ideal) of $L^0$), the Lebesgue property (\ref{eq:LebX2}) is
  equivalent to the generally stronger \emph{order continuity}:
  $\varphi(X_\alpha)\rightarrow \varphi(X)$ if a net $X_\alpha$
  converges \emph{in order} to $X$ ($X_\alpha\stackrel{o}\rightarrow
  X$), i.e., if there exists a decreasing net $(Y_\alpha)_\alpha
  \subset \Xs$ (with the same index set) such that $|X-X_\alpha|\leq
  Y_\alpha\downarrow 0$ (in the lattice sense).  Indeed, for a
  \emph{sequence} (or slightly more generally a \emph{countable net})
  $(X_n)_n\subset \Xs$, the order convergence
  $X_n\stackrel{o}\rightarrow X$ is equivalent to the \emph{dominated
    a.s. convergence}: $|X_n|\leq Y$ ($\forall n$) for some
  $Y\in\Xs_+$ and $X_n\rightarrow X$ a.s., thus the Lebesgue property
  (\ref{eq:LebX2}) is nothing but the \emph{$\sigma$-order
    continuity}. On the other hand, for monotone (increasing)
  functions, the order continuity is equivalent to the continuity from
  above: $X_\alpha\downarrow X$ $\Rightarrow$
  $\varphi(X_\alpha)\downarrow \varphi(X)$, and by the countable-sup
  property, any such decreasing net admits a sequence
  $(X_{\alpha_n})_n \subset (X_\alpha)_\alpha$ such that
  $X_{\alpha_n}\downarrow X$. Consequently, the $\sigma$-order
  continuity implies $\varphi(X)\leq
  \lim_\alpha\varphi(X_\alpha)=\inf_\alpha\varphi(X_\alpha)\leq
  \inf_n\varphi(X_{\alpha_n})=\varphi(X)$.  A similar remark applies
  also to the Fatou property (\ref{eq:FatouX2}) and the
  order-\emph{lower semicontinuity}. For further information, see
  e.g. \citep[Ch.~8, 9]{aliprantis_border06}.
\end{remark}

The Lebesgue and Fatou properties are more ``universal'' than the
corresponding topological regularities as long as we discuss functions
of random variables, in the sense that they are comparable between
different spaces. In fact, it is clear from the definition that if
$\Xs$ and $\Ys$ are solid spaces with $\Xs\subset \Ys(\subset L^0)$
and if a function $\varphi$ on $\Ys$ has the Lebesgue property, then
the restriction $\varphi|_\Xs$ automatically has the Lebesgue property
on $\Xs$, and the same is true for the Fatou property. In particular,
the class of monotone convex functions with the Lebesgue property on
solid spaces $(\varphi,\Xs)$ is partially ordered simply by
$(\varphi,\Xs)\preceq (\psi,\Ys)$ iff $\Xs\subset \Ys$ and
$\varphi=\psi|_\Xs$, and the maximum extension preserving the Lebesgue
property does make sense, while, for instance, maximum extension of
norm-continuous function on $L^\infty$ preserving the topological
continuity does not much make sense:

\begin{definition}[Lebesgue Extension]\label{dfn:LebExt}
  Let $\Xs_0\subset L^0$ be a solid space and $\varphi_0:\Xs_0
  \rightarrow (-\infty,\infty]$ a monotone convex function with the
  Lebesgue property (\ref{eq:LebX2}) on $\Xs_0$. Then we say that
  $(\varphi,\Xs)$ is a \emph{Lebesgue extension} of
  $(\varphi_0,\Xs_0)$ if $\Xs\subset L^0$ is a solid space containing
  $\Xs_0$, $\varphi:\Xs\rightarrow (-\infty,\infty]$ is a monotone
  convex function with the Lebesgue property on $\Xs$ and
  $\varphi_0=\varphi|_\Xs$. If there exists a Lebesgue extension
  $(\hat\varphi,\widehat\Xs)$ such that $\Xs\subset \widehat\Xs$ and
  $\varphi=\hat\varphi|_\Xs$ for any Lebesgue extension
  $(\varphi,\Xs)$ of $(\varphi_0,\Xs_0)$, then we say that
  $(\hat\varphi, \widehat\Xs)$ is the \emph{maximum Lebesgue
    extension} of $(\varphi_0,\Xs_0)$.
\end{definition}
If there is no risk of confusion, we omit $\Xs_0$ and simply say
e.g. $(\varphi,\Xs)$ is a Lebesgue extension of $\varphi_0$. In fact,
we shall be discussing in the sequel the Lebesgue extensions of a
monotone convex function $\varphi_0$ on $L^\infty$, i.e., always
$\Xs_0=L^\infty$.

\subsection{Monotone Convex Functions on $L^\infty$}
\label{sec:MonConvLinfty}

Here we briefly summarize some basic facts on the monotone convex
functions on $L^\infty$.  Note first that the Fatou and Lebesgue
properties (\ref{eq:LebX2}) and (\ref{eq:FatouX2}), respectively, for
a proper convex function $\varphi$ on $L^\infty$ are equivalently
stated as
\begin{align}
  \tag{\text{\ref{eq:FatouX2}${}_\infty$}}
  \label{eq:FatouLinfty1}
  &\sup_n\|X_n\|_\infty<\infty\text{ and }X_n\rightarrow X\text{
    a.s. }\Rightarrow\varphi(X)\leq \liminf_n\varphi(X_n),\\
  \tag{\text{\ref{eq:LebX2}${}_\infty$}}
  \label{eq:LebLinfty1}
  &\sup_n\|X_n\|_\infty<\infty\text{ and }X_n\rightarrow X\text{
    a.s. }\Rightarrow\varphi(X)=\lim_n\varphi(X_n),
\end{align}
while (\ref{eq:FatouLinfty1}) is equivalent to the lower
semicontinuity w.r.t. $\sigma(L^\infty,L^1)$ (the weak*
topology). Indeed, a \emph{convex} set $C\subset L^\infty$ is
$\sigma(L^\infty,L^1)$-closed if and only if for every $c>0$,
$C\cap\{X:\,\|X\|_\infty\leq c\}$ is closed in $L^0$ which is a
well-known consequence of the Krein-Šmulian theorem (see
e.g. \citep{MR0372565}). Thus by Fenchel-Moreau theorem, the Fatou
property of a proper convex function $\varphi$ on $L^\infty$ is
equivalent to the dual representation
\begin{equation}
  \label{eq:SigmaAddDual1}
  \varphi(X)=\sup_{Z\in L^1}(\EB[XZ]-\varphi^*(Z))
\end{equation}
where $\varphi^*$ is the Fenchel-Legendre transform (conjugate) of
$\varphi$ in $\langle L^\infty,L^1\rangle$ duality:
\begin{equation}
  \label{eq:ConjugateLinfty1}
  \varphi^*(Z):=\sup_{X\in L^\infty}(\EB[XZ]-\varphi(X)),\quad \forall Z\in L^1,
\end{equation}
Then the monotonicity of $\varphi$ is equivalent to
$\dom\varphi^*\subset L^1_+$, i.e.,
\begin{equation}
  \label{eq:MonotoneZ}
  Z\in L^1,\,\varphi^*(Z)<\infty\,\Rightarrow\, Z\geq 0.
\end{equation}

The next characterization of the Lebesgue property
(\ref{eq:LebLinfty1}) is a ramification of a result known as the
\emph{Jouini-Schachermayer-Touzi theorem} (JST in short) in financial
mathematics. In the case of convex risk measure (up to change of sign,
i.e. $\varphi(X+c)=\varphi(X)+c$ if $c\in\RB$), it was first obtained
by \citep{jouini06:_law_fatou} with an additional separability
assumption, and the latter assumption was removed later by
\citep{MR2648597} using a homogenization trick. See also
\citep{orihuelaRuizGalan12:_james,orihuela_ruiz12:_lebes_orlic}.
\begin{theorem}[cf. \citep{jouini06:_law_fatou, MR2648597,
    orihuelaRuizGalan12:_james, orihuela_ruiz12:_lebes_orlic} for
  convex risk measures]
  \label{thm:JSTLinfty}
  For a finite monotone convex function
  $\varphi:L^\infty\rightarrow\RB$ satisfying the Fatou property
  (\ref{eq:FatouLinfty1}), the following are equivalent:
  \begin{enumerate}
  \item $\varphi$ has the Lebesgue property (\ref{eq:LebLinfty1});
  \item $\{Z\in L^1:\, \varphi^*(Z)\leq c\}$ is weakly compact in
    $L^1$ for each $c>0$;
  \item for each $X\in L^\infty$, the supremum $\sup_{Z\in
      L^1}(\EB[XZ]-\varphi^*(Z))$ is attained;
  \item $\varphi$ is continuous for the Mackey topology
    $\tau(L^\infty,L^1)$.
  \end{enumerate}

\end{theorem}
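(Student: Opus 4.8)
The plan is to prove the equivalence of the four conditions in Theorem~\ref{thm:JSTLinfty} by establishing a cycle of implications, exploiting the fact that the Fatou property already gives us the dual representation \eqref{eq:SigmaAddDual1} with $\dom\varphi^*\subset L^1_+$.

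First I would tackle (ii)$\Rightarrow$(iii). Given $X\in L^\infty$, the functional $Z\mapsto \EB[XZ]-\varphi^*(Z)$ is weakly upper semicontinuous on $L^1$ (the linear part $Z\mapsto\EB[XZ]$ is weakly continuous since $X\in L^\infty=(L^1)^*$, and $-\varphi^*$ is weakly u.s.c. because $\varphi^*$ is convex and $\sigma(L^1,L^\infty)$-lower semicontinuous as a conjugate). The supremum is unchanged if we restrict to a sublevel set $\{\varphi^*\leq c\}$ for $c$ large enough, which is weakly compact by (ii); a weakly u.s.c. function on a weakly compact set attains its maximum, giving (iii). The implication (iii)$\Rightarrow$(i) is the analytic heart: I would show that attainment of the dual supremum, combined with the Fatou inequality, upgrades $\liminf$ to a genuine limit. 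Concretely, for a uniformly bounded sequence $X_n\to X$ a.s., pick $Z_n$ attaining the supremum for $X_n$; using $\varphi(X_n)=\EB[X_nZ_n]-\varphi^*(Z_n)$ together with $\varphi(X)\geq \EB[XZ_n]-\varphi^*(Z_n)$, one controls $\varphi(X_n)-\varphi(X)\leq \EB[(X_n-X)Z_n]$, and the uniform integrability encoded by the attaining measures (which I expect to extract via condition (ii), so it may be cleaner to run (i)$\Leftrightarrow$(ii) directly) forces this to vanish. The reverse inequality is just Fatou.

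For the equivalence with (ii) itself, I would use the classical Dunford--Pettis theorem: a subset of $L^1$ is relatively weakly compact iff it is uniformly integrable (and bounded). The natural route is to show that the Lebesgue property \eqref{eq:LebLinfty1} is equivalent to uniform integrability of the sublevel sets $\{\varphi^*\leq c\}$. Boundedness in $L^1$ of these sublevel sets follows from the growth of $\varphi$ and finiteness of $\varphi$ on $L^\infty$ (a standard Namioka-type argument shows a finite convex monotone function on $L^\infty$ is norm-continuous, hence $\varphi^*$ has bounded sublevel sets). For uniform integrability, I would test against indicators: if some sublevel set failed to be uniformly integrable, there would be sets $A_k$ with $\PB(A_k)\to 0$ but $\sup_{\varphi^*(Z)\leq c}\EB[|Z|\mathds1_{A_k}]\geq\delta>0$, and I would manufacture a uniformly bounded sequence (e.g. of the form $c\,\mathds1_{A_k}$ or truncations) converging a.s. along which the dual representation detects the failure of the limit, contradicting (i).

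Finally, (i)$\Leftrightarrow$(iv) links the Mackey topology $\tau(L^\infty,L^1)$ to the Lebesgue property. One direction, (iv)$\Rightarrow$(i), should follow because dominated a.s. convergence of a norm-bounded sequence implies convergence in the Mackey topology (by Dunford--Pettis, Mackey neighborhoods are controlled by uniformly integrable, hence weakly compact, subsets of $L^1$, against which dominated a.s. convergence is uniform), so Mackey continuity yields $\varphi(X_n)\to\varphi(X)$. For (i)$\Rightarrow$(iv), I would combine (ii) with the Mackey--Arens theorem: condition (ii) says the sublevel sets of $\varphi^*$ are weakly compact, which makes them equicontinuous for $\tau(L^\infty,L^1)$, and the dual representation \eqref{eq:SigmaAddDual1} then exhibits $\varphi$ as a supremum of Mackey-continuous affine functions that is itself Mackey-continuous on bounded sets, upgraded to full continuity via convexity. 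I expect the main obstacle to be the (iii)$\Rightarrow$(i) or equivalently (i)$\Leftrightarrow$(ii) step, where the passage from attainment/weak compactness to the genuine limit requires carefully exploiting uniform integrability of the optimizing densities rather than just weak upper semicontinuity; the other implications are largely soft functional-analytic arguments (Fenchel--Moreau, Dunford--Pettis, Mackey--Arens).
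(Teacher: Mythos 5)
There is a genuine gap, and it sits exactly where you suspected trouble: nothing in your plan actually proves an implication \emph{out of} condition (3). Your cycle is (ii)$\Rightarrow$(iii), (i)$\Leftrightarrow$(ii), (i)$\Leftrightarrow$(iv), plus a sketch of (iii)$\Rightarrow$(i) in which the uniform integrability of the optimizing densities $Z_n$ is ``extracted via condition (ii)''---that is, you assume the very weak compactness that still has to be established, so attainment is never used as a hypothesis anywhere. This direction cannot be closed by the soft tools you list (Fenchel--Moreau, Dunford--Pettis, semicontinuity on compact sets): the statement that attainment of $\sup_{Z}(\EB[XZ]-\varphi^*(Z))$ for \emph{every} $X\in L^\infty=(L^1)^*$ forces the sublevel sets $\{\varphi^*\leq c\}$ to be relatively weakly compact is a James-type weak compactness theorem, and since $L^1$ is not reflexive there is no elementary route to it. The paper proves (3)$\Rightarrow$(2) in precisely this way: first one checks that $\varphi^*$ is coercive, by testing the conjugate against $\alpha\,\mathrm{sgn}(Z)\in L^\infty$ to get $\varphi^*(Z)\geq \alpha\|Z\|_1-\varphi(\alpha\,\mathrm{sgn}(Z))$, where the right-hand side is controlled because $\varphi$ is \emph{finite-valued} on $L^\infty$; then one invokes the coercive (perturbed) James theorem of \citep{orihuelaRuizGalan12:_james}, recalled in the paper as Theorem~\ref{thm:James}. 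Without this (or an equivalent James-type result) your proof establishes only the equivalence of (1), (2), (4) together with (2)$\Rightarrow$(3).

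The remaining steps of your proposal do follow the paper's route---(1)$\Leftrightarrow$(2)$\Rightarrow$(3) is the Jouini--Schachermayer--Touzi argument, and the failure-of-uniform-integrability construction with indicators $\alpha\mathds1_{A_k}$ is exactly how (1)$\Rightarrow$(2) is run in \citep{jouini06:_law_fatou}---with two repairable blemishes. First, $\PB(A_k)\to 0$ gives a.s. convergence of $\mathds1_{A_k}$ only along a subsequence (pass to $\sum_k\PB(A_k)<\infty$ and use Borel--Cantelli); harmless. Second, in (i)$\Rightarrow$(iv) your closing step ``a supremum of Mackey-continuous affine functions that is itself Mackey-continuous'' is false as stated---a pointwise supremum of continuous affine maps is only lower semicontinuous. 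The correct bridge, and the one the paper cites for (2)$\Leftrightarrow$(4), is Moreau's theorem \citep{MR0160093}: for a finite, $\sigma(L^\infty,L^1)$-lower semicontinuous convex $\varphi$, the $\sigma(L^1,L^\infty)$-inf-compactness of $\varphi^*$ is equivalent to $\tau(L^\infty,L^1)$-continuity of $\varphi$; one genuinely needs the local uniform localization of the supremum into a single sublevel set, not just Mackey--Arens equicontinuity of each fixed sublevel family.
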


\begin{proof}
  (1) $\Leftrightarrow$ (2) $\Rightarrow$ (3) can be proved in the
  same way as \citep{jouini06:_law_fatou}, while given the finiteness
  and $\sigma(L^\infty,L^1)$-lower semicontinuity of $\varphi$, (2)
  $\Leftrightarrow$ (4) is also a well-known fact in convex analysis
  (e.g. \citep[Propositions~1 and 2]{MR0160093}).  For (3)
  $\Rightarrow$ (2), observe that for each $Z\in L^1$ and $\alpha>0$,
  $\varphi^*(Z)\geq \EB[\alpha\mathrm{sgn}(Z)Z]
  -\varphi(\alpha\mathrm{sgn}(Z)) \geq\alpha\|Z\|_1-\varphi(-\alpha)$
  where $\mathrm{sgn}(Z):=\mathds1_{\{Z>0\}}-\mathds1_{\{Z<0\}} \in
  L^\infty$. Since $\varphi$ is finite-valued, this shows that
  $\lim_{\|Z\|_1\rightarrow \infty}\varphi^*(Z)/\|Z\|_1=\infty$ (i.e.,
  $\varphi^*$ is coercive). Then the implication (3) $\Rightarrow$ (2)
  follows from \emph{coercive James's theorem} due to
  \citep{orihuelaRuizGalan12:_james} (recalled below as
  Theorem~\ref{thm:James}).
\end{proof}

Finally, we note that the Lebesgue property on $L^\infty$ is
reasonable. In fact, when $(\Omega,\FC,\PB)$ is \emph{atomless} (which
is not a restriction in practice), a sufficient condition for the
Lebesgue property (\ref{eq:LebLinfty1}) on $L^\infty$ for monotone
convex function $\varphi$ is that it has a \emph{finite-valued}
extension to a solid space $\Xs\supsetneq L^\infty$ such that
$X\in\Xs$ and $\mathrm{law}(Y)=\mathrm{law}(X)$ $\Rightarrow$
$Y\in\Xs$ (\emph{rearrangement invariant}). See
\citep[Th.~3]{MR2509290} where this is proved for convex risk
measures, and an almost same proof still works for general
\emph{finite} monotone convex functions. All $L^p$ ($0\leq p\leq
\infty$), Orlicz spaces and Orlicz hearts (the Morse subspaces of the
corresponding Orlicz spaces) are of this type.  Thus functions
$\varphi$ that violate this condition are rarely of practical
interest.

\subsection{Other extensions and general remarks}
\label{sec:Other}

We emphasize that the preservation of the Lebesgue property is
crucial. In fact, \emph{any finite} monotone convex function on
$L^\infty$ has \emph{an} extension to the whole $L^0$ if one does not
mind any regularity or uniqueness. Indeed, let
\begin{equation}
  \label{eq:CDK1}
  \varphi_{\mathrm{ext}}(X):=\lim_n\lim_m\varphi_0(( X\vee (-n)\wedge m),\quad X\in L^0.
\end{equation}
Noting that $(X\vee (-n))\wedge m=X$ if $\|X\|_\infty\leq m,n<\infty$,
this is well-defined on $L^0$ with values in $[-\infty,\infty]$, and
$\varphi_{\mathrm{ext}}|_{L^\infty}=\varphi_0$. But it is not a
regular nor unique extension in any reasonable sense, or it may even
be improper.  In the context of convex risk measures,
\citep{MR2211713} studied this type extension, providing a necessary
and sufficient condition for $\varphi_{\mathrm{ext}}$ to avoid the
value $-\infty$ (hence proper), but even in that case, we have no
regularity nor uniqueness.

\begin{remark}\label{rem:Truncation}
  In application, one often hopes to approximate \emph{unbounded}
  $X\in L^0$ by \emph{bounded} ones via suitable \emph{truncation} as
  $X\mathds1_{\{|X|\leq n\}}\stackrel{n}\rightarrow X$, $(X\vee
  (-m))\wedge n\stackrel{n,m}\rightarrow X$. As these convergences are
  \emph{order convergences}, Remark~\ref{rem:Lebesgue-OrderConti}
  tells us that monotone convex functions $\varphi$ with the Lebesgue
  property are stable for this sort of approximations:
  \begin{equation}
    \label{eq:LebesgueInfSup}
    \varphi(X)=\lim_{m\rightarrow\infty}\lim_{n\rightarrow\infty}
    \varphi((X\vee(-m))\wedge n)
    =\lim_{n\rightarrow\infty}
    \varphi(X\mathds1_{\{|X|\leq n\}}),
  \end{equation}
  and two limits in the middle expression are interchangeable. In
  fact, a sort of converse is also true: a finite monotone convex
  function $\varphi$ with the Fatou property on a solid space
  $\Xs\subset L^0$ has the Lebesgue property if and only if for any
  \emph{countable} net $(X_\alpha)_\alpha$,
  \begin{equation}
    \label{eq:AproxBDD}
    X_\alpha\in L^\infty,\, |X_\alpha|\leq |X|,\,
    \forall \alpha,\text{ and }\,X_\alpha\rightarrow X\text{ a.s. }
    \Rightarrow \, \varphi(X_\alpha)\rightarrow\varphi(X).
  \end{equation}
  See Proposition~\ref{prop:Truncation}. In particular, the maximum
  Lebesgue extension tells us the precise extent to which \emph{any
    ``reasonable'' truncation procedures} safely work.

\end{remark}

A closely related question, recently addressed by \citep{MR2943186},
is the extension preserving the Fatou property (instead of Lebesgue).
There the ``$L^1$-closure'' of $\varphi_0$ given by
$\bar\varphi^1_0(X):=\sup_{Y\in L^\infty}(\EB[XY]-\varphi_0^*(Y))$ on
$L^1$ is considered. This is clearly proper and (weakly) lower
semicontinuous (hence Fatou) on $L^1$ as soon as $\dom\varphi_0^*\cap
L^\infty\neq \emptyset$, while it is not clear if $\bar\varphi^1$ is
an extension of $\varphi_0$, i.e., if
$\bar\varphi^1|_{L^\infty}=\varphi_0$.  \citep[Theorem~2.2]{MR2943186}
proved that this is the case if $\varphi_0$ is \emph{law-invariant}
(i.e. $X\stackrel{\text{law}}=Y$ $\Rightarrow$
$\varphi_0(X)=\varphi_0(Y)$), and then $\bar\varphi^1$ is the
\emph{unique} lower semi-continuous extension of $\varphi$ to
$L^1$. In particular, every \emph{law-invariant} convex risk measure
has a ``Fatou'' extension to $L^1$. In contrast, the Lebesgue property
may not be preserved to $L^1$ (even if law-invariant) as the next
example illustrates.

\begin{example}[Modular]
  \label{ex:Modular}
  Let $\Phi:\RB\rightarrow\RB_+$ be a lower semicontinuous even convex
  function with $\Phi(0)=0$, and
  $\lim_{x\rightarrow\infty}\Phi(x)=\infty$ (i.e., a \emph{finite
    Young function}). Then put
  \begin{equation}
    \label{eq:Modular1}
    \rho_\Phi(X):=\EB[\Phi(X^+)]=\EB[\Phi(X\vee 0)],\,X\in L^0.
  \end{equation}
  This is clearly a law-invariant $[0,\infty]$-valued monotone convex
  function with $\rho_\Phi(0)=0$ satisfying the Fatou property on the
  whole $L^0$ (by Fatou's lemma since $\Phi\geq 0$). Let
  \begin{align}
    L^\Phi&:=\{X\in L^0:\,\exists\alpha>0,\, \EB[\Phi(\alpha|X|)]<\infty\} \quad (\text{Orlicz space}),\\
    M^\Phi&:=\{X\in L^0:\,\forall \alpha>0,\, \EB[\Phi(\alpha|X|)]
    <\infty\}\quad (\text{Orlicz heart}).
  \end{align}
  It always holds $L^\infty\subset M^\Phi\subset L^\Phi\subset L^1$
  and $M^\Phi=L^\Phi$ if $\Phi$ satisfies the so-called
  $\Delta_2$-condition, while if for example $\Phi(x)=e^{|x|}-1$ and
  $(\Omega,\FC,\PB)$ is atomless, then $L^\infty\subsetneq
  M^\Phi\subsetneq L^\Phi\subsetneq L^1$. The function $\rho_\Phi$ is
  Lebesgue on $M^{\Phi}$ since $|X_n|\leq |Y|$ with $Y\in M^\Phi$ and
  $X_n\rightarrow X$ a.s. imply $|\Phi(X_n^+)|\leq \Phi(|Y|)\in L^1$,
  hence $\rho_\Phi(X_n)=\EB[\Phi(X_n^+)]\rightarrow
  \EB[\Phi(X^+)]=\rho_\Phi(X)$ by dominated convergence. On the other
  hand, $\rho_\Phi$ is \emph{not Lebesgue} on $L^\Phi$ unless
  $M^\Phi=L^\Phi$. Indeed, if $X\in L^\Phi\setminus M^\Phi$, and
  $\alpha>0$ is such that $\EB[\Phi(\alpha|X|)]=\infty$, then
  $\rho_\Phi(\alpha|X|\mathds1_{\{|X|>n\}})=\EB[\Phi(\alpha|X|)\mathds1_{\{|X|>n\}}]\equiv
  \infty$ for all $n$ while $0\leq \alpha|X|\mathds1_{\{|X|>n\}}\leq
  \alpha|X|$ and $\alpha|X|\mathds1_{\{|X|>n\}}\rightarrow 0$ a.s. By
  the law-invariance and \citep{MR2943186},
  $(\rho_\Phi,L^1)$ is the unique Fatou-preserving extension of
  $(\rho_\Phi|_{L^\infty},L^\infty)$ which is not Lebesgue on
  $L^\Phi\subsetneq L^1$. Consequently, $\rho_\Phi|_{L^\infty}$ has no
  Lebesgue extension to $L^1$.
  
\end{example}

\section{Statements of Main Results}
\label{sec:MaxLebExt}

We begin with a couple of elementary observations. Let
$\varphi_0:L^\infty\rightarrow\RB$ be a finite monotone convex
function with the Fatou property (\ref{eq:FatouLinfty1}) hence
represented as (\ref{eq:SigmaAddDual1}) by the conjugate
$\varphi^*_0(Z)=\sup_{X\in L^\infty}(\EB[XZ]-\varphi_0(X))$ ($Z\in
L^1$). Let
\begin{align}
  \label{eq:DX}
  \DC_0&:=\left\{X\in L^0: \,X^-Z\in L^1,\,\forall Z\in \dom\varphi^*_0\right\}.
\end{align}
This is not a vector space, but a convex cone containing $L^\infty\cup
L^0_+$, which is \emph{upward solid} in the sense that $X\in\DC_0$ and
$X\leq Y$, then $Y\in \DC_0$ since then $Y^-\leq X^-$.  We then define
\begin{equation}
  \label{eq:phihat1}
  \hat\varphi(X):=\sup_{Z\in \dom\varphi^*_0}\left(\EB[XZ]-\varphi^*_0(Z)\right),\quad \forall X\in\DC_0,
\end{equation}
where $\dom\varphi^*_0:=\{Z\in L^1:\, \varphi^*_0(Z)<\infty\}\subset
L^1_+$ (by (\ref{eq:MonotoneZ})). This is well-defined with values in
$(-\infty,\infty]$ and is \emph{continuous from below}:
\begin{lemma}
  \label{lem:HatPhiMon1}
  Let $\varphi_0$ be a finite monotone convex function with the Fatou
  property on $L^\infty$. Then $\hat\varphi$ defined by
  (\ref{eq:phihat1}) is a proper monotone convex function on $\DC_0$
  with $\hat\varphi|_{L^\infty}=\varphi_0$ and
\begin{equation}
  \label{eq:HatPhiMon1}
  X_n\in\DC_0,\,X_n\uparrow X\in L^0\text{ a.s. }\Rightarrow\, \hat\varphi(X)=\lim_n\hat\varphi(X_n).
\end{equation}
\end{lemma}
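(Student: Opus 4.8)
The plan is to dispatch well-definedness, convexity, monotonicity and the identity $\hat\varphi|_{L^\infty}=\varphi_0$ as quick consequences of the definitions and the dual representation (\ref{eq:SigmaAddDual1}), and to spend the real effort on the continuity from below (\ref{eq:HatPhiMon1}).

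First I would verify that for each $X\in\DC_0$ and $Z\in\dom\varphi^*_0$ the integral $\EB[XZ]$ is a well-defined element of $(-\infty,\infty]$: since $\dom\varphi^*_0\subset L^1_+$ and $X^-Z\in L^1$ by the very definition of $\DC_0$, one may write $\EB[XZ]=\EB[X^+Z]-\EB[X^-Z]$ as a (possibly infinite) nonnegative term minus a finite one. Hence each map $X\mapsto\EB[XZ]-\varphi^*_0(Z)$ is affine with values in $(-\infty,\infty]$, and $\hat\varphi$, their pointwise supremum, is convex and $(-\infty,\infty]$-valued. Monotonicity is immediate from $XZ\leq YZ$ whenever $X\leq Y$ and $Z\geq0$. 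The identity $\hat\varphi|_{L^\infty}=\varphi_0$ is just (\ref{eq:SigmaAddDual1}): on $L^\infty$ the terms with $Z\notin\dom\varphi^*_0$ contribute $-\infty$, so the supremum over $L^1$ and the supremum over $\dom\varphi^*_0$ coincide; in particular $\hat\varphi$ is finite on $L^\infty$, so it is proper.

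For (\ref{eq:HatPhiMon1}), fix $X_n\uparrow X$ a.s. with $X_n\in\DC_0$. Since $X\geq X_1$ forces $X^-\leq X_1^-$, I get $X^-Z\leq X_1^-Z\in L^1$ for every $Z\in\dom\varphi^*_0$, so $X\in\DC_0$ and $\hat\varphi(X)$ is defined; monotonicity already gives $\lim_n\hat\varphi(X_n)\leq\hat\varphi(X)$. For the reverse inequality I would interchange the two suprema,
\[
  \lim_n\hat\varphi(X_n)=\sup_n\sup_{Z\in\dom\varphi^*_0}\bigl(\EB[X_nZ]-\varphi^*_0(Z)\bigr)
  =\sup_{Z\in\dom\varphi^*_0}\Bigl(\sup_n\EB[X_nZ]-\varphi^*_0(Z)\Bigr),
\]
and then prove $\sup_n\EB[X_nZ]=\EB[XZ]$ for each fixed $Z$. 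Writing $X_nZ=(X_n-X_1)Z+X_1Z$ with $(X_n-X_1)Z\geq0$ increasing to $(X-X_1)Z$, the monotone convergence theorem yields $\EB[(X_n-X_1)Z]\uparrow\EB[(X-X_1)Z]$, while $\EB[X_1Z]\in(-\infty,\infty]$ is a fixed constant whose negative part is finite because $X_1\in\DC_0$; adding the two gives $\EB[X_nZ]\uparrow\EB[XZ]$ in $(-\infty,\infty]$, and substituting back closes the argument.

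The hard part is not conceptual but the bookkeeping of $+\infty$: because $\EB[X^+Z]$ may be infinite, every identity must be kept inside $(-\infty,\infty]$ so that no $\infty-\infty$ is ever formed. Shifting by $X_1$ (rather than splitting into $X^\pm$ and separately treating a decreasing sequence of negative parts) is precisely what reduces everything to a single application of the increasing monotone convergence theorem, with membership in $\DC_0$ guaranteeing that the subtracted constant $\EB[X_1Z]$ is never $-\infty$.
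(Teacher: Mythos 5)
Your proof is correct and follows essentially the same route as the paper: $X\in\DC_0$ by upward solidity, interchange of the two suprema, and $\sup_n\EB[X_nZ]=\EB[XZ]$ via monotone convergence justified by $X_1^-Z\in L^1$. Your shift-by-$X_1$ decomposition merely makes explicit the bookkeeping that the paper's phrase ``by the monotone convergence theorem since $X_1^-Z\in L^1$'' leaves implicit.
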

\begin{proof}
  It is clear from the Fatou property that
  $\hat\varphi|_{L^\infty}=\varphi_0$, and in particular, it is
  proper. Since $\hat\varphi$ is a point-wise supremum of proper
  convex functions $X\mapsto \EB[XZ]-\varphi^*_0(Z)$ ($Z\in
  \dom\varphi^*_0$), $\hat\varphi$ is convex. If $X_n\in \DC_0$ for
  each $n$, and if $X_n\uparrow X$ a.s. for some $X\in L^0$, we see
  that $X\in\DC_0$ as well (since $\DC_0$ is upward solid) and that
  $\EB[XZ]=\sup_n\EB[X_nZ]$ for all $Z\in \dom\varphi_0^*\subset
  L^1_+$ by the monotone convergence theorem since $X_1^-Z\in L^1$,
  hence
  \begin{align*}
    \hat\varphi(X)&=\sup_{Z\in\dom\varphi^*_0}\left(\sup_n\EB[X_nZ]-\varphi^*_0(Z)\right)
    =\sup_n\sup_{Z\in\dom\varphi^*_0}\left(\EB[X_nZ]-\varphi^*_0(Z)\right)\\
    &=\sup_n\hat\varphi(X_n).
  \end{align*}
  Thus we have (\ref{eq:HatPhiMon1}).
\end{proof}

In the sequel, we always suppose the following without further notice:
\begin{assumption}
  \label{as:Standing}
  $\varphi_0$ is a finite-valued monotone convex function on
  $L^\infty$ satisfying the Lebesgue property (\ref{eq:LebLinfty1})
  and $\varphi_0(0)=0$.
\end{assumption}
The last assumption is just for notational simplicity.  Indeed, we can
replace $\varphi_0$ by $\varphi_0-\varphi_0(0)$ since $\varphi_0$ is
supposed to be finite, and $(\varphi,\Xs)$ is a Lebesgue extension of
$(\varphi_0,L^\infty)$ if and only if $(\varphi-\varphi_0(0),\Xs)$ is
a Lebesgue extension of $(\varphi_0-\varphi_0(0),L^\infty)$.

Suppose that $(\varphi,\Xs)$ is a Lebesgue extension of $\varphi_0$ in
the sense of Definition~\ref{dfn:LebExt}. Then observe that for any
$Y\in \Xs_+$, $|Y\wedge n|\leq Y$ and $Y\wedge n\uparrow Y$ a.s.,
hence the Lebesgue property of $\varphi$ on $\Xs$, the continuity from
below of $\hat\varphi$ on $L^0_+$ and
$\varphi|_{L^\infty}=\varphi_0=\hat\varphi|_{L^\infty}$ show that
$\varphi(Y)=\lim_n\varphi(Y\wedge n)=\lim_n\hat\varphi(Y\wedge
n)=\hat\varphi(Y)$. In particular,
\begin{lemma}
  \label{lem:Observation2}
  Let $(\varphi,\Xs)$ be a Lebesgue extension of $\varphi_0$. Then for
  any $X\in\Xs$,
  \begin{equation}
    \label{eq:ElemObs2}
    \lim_N\hat\varphi(\alpha|X|\mathds1_{\{|X|>N\}}) 
    =\lim_N\varphi(\alpha|X|\mathds1_{\{|X|>N\}})=0,\,\forall \alpha>0.  
  \end{equation}

\end{lemma}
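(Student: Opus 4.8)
The plan is to derive both equalities from results already in place, since the lemma is essentially a corollary of the identity $\varphi=\hat\varphi$ on $\Xs_+$ together with dominated convergence. Fix $X\in\Xs$ and $\alpha>0$, and abbreviate $Y_N:=\alpha|X|\mathds1_{\{|X|>N\}}$. First I would check that every term is legitimate: since $0\le Y_N\le\alpha|X|$ and $\Xs$ is solid with $\alpha|X|\in\Xs_+$, we have $Y_N\in\Xs_+$; in particular $Y_N\in L^0_+\subset\DC_0$, so $\hat\varphi(Y_N)$ is well-defined and both sides of the asserted chain make sense.

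The first equality is then immediate. The paragraph preceding the statement shows, via the continuity from below of $\hat\varphi$ (Lemma~\ref{lem:HatPhiMon1}) and the Lebesgue property of $\varphi$ applied to $Y\wedge n\uparrow Y$, that $\varphi=\hat\varphi$ on all of $\Xs_+$. Applying this to each $Y_N\in\Xs_+$ gives $\hat\varphi(Y_N)=\varphi(Y_N)$ termwise, so the two limits coincide as soon as one of them exists.

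For the remaining claim that this common limit is $0$, I would invoke the Lebesgue property of $\varphi$ on $\Xs$ directly. Since $X\in\Xs\subset L^0$ is a.s.\ finite, the sets $\{|X|>N\}$ decrease to the $\PB$-null set $\{|X|=\infty\}$ as $N\uparrow\infty$, whence $Y_N\to 0$ a.s.; moreover $|Y_N|\le\alpha|X|\in\Xs_+$ for all $N$, so this is a dominated a.s.\ convergence in the precise sense of \eqref{eq:LebX2}. The Lebesgue property therefore yields $\lim_N\varphi(Y_N)=\varphi(0)$, and $\varphi(0)=\varphi_0(0)=0$ because $\varphi|_{L^\infty}=\varphi_0$ and $\varphi_0(0)=0$ by Assumption~\ref{as:Standing}.

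No step here is genuinely delicate: the content of the lemma lies entirely in the already-established equality $\varphi=\hat\varphi$ on $\Xs_+$ and in recognizing the truncations $Y_N$ as a dominated a.s.\ null sequence. The only point deserving a moment's attention is the a.s.\ finiteness of $X$, which is exactly what membership in $L^0$ provides and which is what forces $Y_N\to 0$ rather than leaving a positive remainder on $\{|X|=\infty\}$.
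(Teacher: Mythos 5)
Your proof is correct and follows essentially the same route as the paper's: both establish $Y_N=\alpha|X|\mathds1_{\{|X|>N\}}\in\Xs_+$ by solidness, invoke the identity $\hat\varphi=\varphi$ on $\Xs_+$ from the paragraph preceding the lemma, and apply the Lebesgue property to the dominated a.s.\ null sequence $Y_N\rightarrow 0$ to conclude $\varphi(Y_N)\rightarrow\varphi(0)=0$. Your additional explicit remarks (a.s.\ finiteness of $X$ and $\varphi(0)=\varphi_0(0)=0$ via Assumption~\ref{as:Standing}) are accurate and merely make visible what the paper leaves implicit.
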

\begin{proof}
  If $X\in \Xs$, then
  $X^\alpha_N:=\alpha|X|\mathds1_{\{|X|>N\}}\in\Xs$, $0\leq
  X^\alpha_N\leq \alpha|X|\in \Xs$ (by the solidness), and
  $X^\alpha_N\rightarrow 0$ a.s. as $N\rightarrow \infty$. Hence
  $\hat\varphi(X^\alpha_N)=\varphi(X^\alpha_N)\rightarrow 0$ by the
  Lebesgue property of $\varphi$ on $\Xs$ and
  $\hat\varphi(Y)=\varphi(Y)$ for $Y\in\Xs_+$.
\end{proof}

This leads us to the following definition:
\begin{equation}
  \label{eq:Mphiu1}
  M^{\hat\varphi}_u 
  :=\left\{X\in L^0:\, \lim_N 
    \hat\varphi\left(\alpha|X|\mathds1_{\{|X|>N\}}\right)=0,\,\forall \alpha>0
  \right\}.
\end{equation}
At the first glance, we note that this is well-defined since
$L^0_+\subset \DC_0$ and that $M^{\hat\varphi}_u$ is a solid vector
space. Indeed, the linearity follows from the observation that
$|X+Y|\mathds1_{\{|X+Y|>N\}}\leq
2|X|\mathds1_{\{|X|>N/2\}}+2|Y|\mathds1_{\{|Y|>N/2\}}$, while the
solidness is a consequence of the monotonicity of $\hat\varphi$ (and
of $x\mapsto |x|\mathds1_{\{|x|>N\}}$).

Next, we see that $\hat\varphi$ is well-defined on
$M^{\hat\varphi}_u$. Observe first from the definition
(\ref{eq:phihat1}) that
\begin{equation}
  \label{eq:YoungHat1}
  \EB[\alpha|X|Z] \leq \hat\varphi(\alpha|X|)+\varphi^*_0(Z),
  \quad \forall\alpha>0,\, X\in L^0,\,Z\in\dom\varphi^*_0.
\end{equation}
Thus $\DC_0\cap(-\DC_0)$ contains the \emph{Orlicz space} and
\emph{Orlicz heart} of $\hat\varphi$:
\begin{align}\label{eq:OrliczSp1}
  L^{\hat\varphi}:&=\left\{X\in L^0:\,\exists \alpha>0,\,
    \hat\varphi(\alpha|X|)<\infty\right\},\\
  \label{eq:OrliczHeart1}
  M^{\hat\varphi}:&=\left\{X\in L^0:\,\forall \alpha>0,\,
    \hat\varphi(\alpha|X|)<\infty\right\}.
\end{align}
Thus $\hat\varphi$ is well-defined on $L^{\hat\varphi}$ as a proper
monotone convex function, and it is finite on $M^{\hat\varphi}$ (since
$\hat\varphi(X)\leq \hat\varphi(|X|)<\infty$ if $X\in
M^{\hat\varphi}$). Also, for any $\alpha>0$, $X\in L^0$ and $N\in\NB$,
\begin{equation}
  \label{eq:HatPhiFinite1}
  \hat\varphi(\alpha|X|)\leq
  \frac12\hat\varphi(2\alpha|X|\mathds1_{\{|X|>N\}})+\frac12\varphi_0(2\alpha
  N)
\end{equation}
The second term in the right hand side is always finite since
$\varphi_0$ is supposed to be finite, and if $X\in M^{\hat\varphi}_u$,
then for any $\alpha>0$, the first term is \emph{eventually finite},
thus $M^{\hat\varphi}_u\subset M^{\hat\varphi}\subset
L^{\hat\varphi}\subset \DC_0$. Therefore, $\hat\varphi$ is
well-defined on $M^{\hat\varphi}$ as a \emph{finite-valued} monotone
convex function.
\begin{remark}
  The same argument together with (\ref{eq:ElemObs2}) tells us also
  that only \emph{finite-valued} functions can be Lebesgue extensions
  of $\varphi_0$ as long as the original function $\varphi_0$ is
  finite.
\end{remark}

\subsection{Maximum Lebesgue Extension}
\label{sec:MaxLeb}

With these preparation, we now give a positive answer to
Question~\ref{ques:1}:
\begin{theorem}
  \label{thm:MainLebExt1} 
  Suppose Assumption~\ref{as:Standing}. Then the pair
  $(\hat\varphi,M^{\hat\varphi}_u)$, defined by (\ref{eq:phihat1}) and
  (\ref{eq:Mphiu1}), is the maximum Lebesgue extension of $\varphi_0$,
  I.e.,
  \begin{enumerate}
  \item $M^{\hat\varphi}_u$ is a solid subspace of $L^0$ containing
    the constants, $\hat\varphi:M^{\hat\varphi}_u\rightarrow\RB$ is a
    monotone convex function with the Lebesgue property
    (\ref{eq:LebesgueIntro1}) on $M^{\hat\varphi}_u$ and
    $\hat\varphi|_{L^\infty}=\varphi_0$;
  \item if $(\varphi,\Xs)$ is a pair satisfying the conditions of (1),
    then $\Xs\subset M^{\hat\varphi}_u$ and
    $\varphi=\hat\varphi|_{\Xs}$.
  \end{enumerate}

\end{theorem}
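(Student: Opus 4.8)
The plan is to separate the routine structural claims from the one genuinely analytic fact, and to organize everything around the assertion that $M^{\hat\varphi}_u$, suitably normed, is an order-continuous Banach lattice. Most of part~(1) is already available: $M^{\hat\varphi}_u$ is a solid vector space (noted after \eqref{eq:Mphiu1}); it contains $L^\infty$---if $\|X\|_\infty\leq N$ then $\alpha|X|\mathds1_{\{|X|>N\}}=0$ and the defining limit is trivially $0$---hence the constants; and $M^{\hat\varphi}_u\subset M^{\hat\varphi}$, on which $\hat\varphi$ is a finite monotone convex function with $\hat\varphi|_{L^\infty}=\varphi_0$ by Lemma~\ref{lem:HatPhiMon1}. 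So the only substantive claim in~(1) is the Lebesgue property of $\hat\varphi$ on $M^{\hat\varphi}_u$. For part~(2), Lemma~\ref{lem:Observation2} already yields $\Xs\subset M^{\hat\varphi}_u$ for every Lebesgue extension $(\varphi,\Xs)$, so only the identity $\varphi=\hat\varphi|_\Xs$ remains.

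To prove the Lebesgue property I would equip $M^{\hat\varphi}_u$ with the gauge (Luxemburg-type) norm of the solid, convex, absorbing set $\{X:\hat\varphi(|X|)\leq1\}$, and establish---this I would isolate as Theorem~\ref{thm:MphiUOrderContiBanach}---that after passing to a suitable $\tilde\PB\ll\PB$ this gauge is a complete, \emph{order-continuous} lattice norm. The change of measure serves to make the gauge positive definite: by the Young-type inequality \eqref{eq:YoungHat1}, $\EB[\alpha|X|Z]\leq\hat\varphi(\alpha|X|)+\varphi_0^*(Z)$ for $Z\in\dom\varphi_0^*$, so choosing the density of $\tilde\PB$ from a strictly positive countable convex combination of elements of $\dom\varphi_0^*$ (a uniformly integrable family, by the weak compactness of the sublevel sets of $\varphi_0^*$ in Theorem~\ref{thm:JSTLinfty}) forces $X=0$ whenever the gauge vanishes, the part of $\Omega$ unseen by $\dom\varphi_0^*$ being collapsed by $\tilde\PB$ (and there $\hat\varphi$ ignores the values of its argument, so it is harmless). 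Granting this, $\hat\varphi$ is a finite monotone convex function on a Banach lattice, hence norm-continuous by the extended Namioka-Klee theorem of \citep{MR2648595}; and since in an order-continuous Banach lattice a dominated a.s.\ convergent sequence (an order-convergent one, by Remark~\ref{rem:Lebesgue-OrderConti}) is norm-convergent, norm-continuity upgrades directly to the Lebesgue property \eqref{eq:LebesgueIntro1}, proving~(1).

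With~(1) in hand, the identity $\varphi=\hat\varphi|_\Xs$ of part~(2) follows by truncation. Fix $X\in\Xs\subset M^{\hat\varphi}_u$ and set $X_n:=X\mathds1_{\{|X|\leq n\}}\in L^\infty$, so $|X_n|\leq|X|$ and $X_n\to X$ a.s. The Lebesgue property of $\varphi$ on $\Xs$ gives $\varphi(X_n)\to\varphi(X)$; on the other hand $\varphi(X_n)=\varphi_0(X_n)=\hat\varphi(X_n)$ since $X_n\in L^\infty$, and the Lebesgue property of $\hat\varphi$ just proved (with the same domination by $|X|\in M^{\hat\varphi}_u$) gives $\hat\varphi(X_n)\to\hat\varphi(X)$. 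Comparing limits yields $\varphi(X)=\hat\varphi(X)$.

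The crux is the Banach-lattice step, and within it the order-continuity of the gauge norm. This is precisely where the defining ``uniform integrability'' of $M^{\hat\varphi}_u$---that $\hat\varphi(\alpha|X|\mathds1_{\{|X|>N\}})\to0$ for \emph{every} $\alpha$, rather than the mere finiteness defining $M^{\hat\varphi}$---is indispensable: it is what forces the tails $\|\,|X|\mathds1_{\{|X|>N\}}\|\to0$, from which order-continuity of the norm ($Y_N\downarrow0\Rightarrow\|Y_N\|\to0$) follows. One must also verify completeness (e.g.\ that $M^{\hat\varphi}_u$ is closed under the gauge inside the Orlicz heart $M^{\hat\varphi}$) and handle the change of measure cleanly, checking that the $\PB$-a.s.\ order structure underlying \eqref{eq:LebesgueIntro1} is not disturbed, since $\hat\varphi$ depends only on the $\tilde\PB$-support, on which $\tilde\PB$ and $\PB$ induce the same classes. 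Once the order-continuous Banach lattice is secured, the Namioka-Klee step and the order-to-norm bridge are routine, and both parts of the theorem follow as above.
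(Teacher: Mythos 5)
Your proposal is correct and follows essentially the same route as the paper's proof: reduce everything to the Lebesgue property of $\hat\varphi$ on $M^{\hat\varphi}_u$, equip that space with the gauge norm, pass to a measure $\QB\ll\PB$ built from a countable convex combination in $\dom\varphi^*_0$ with maximal support to make the gauge a genuine norm, invoke Theorem~\ref{thm:MphiUOrderContiBanach} (order-continuous Banach lattice) together with the extended Namioka--Klee theorem, and finish part~(2) by the truncation argument $\varphi(X)=\lim_n\varphi(X\mathds1_{\{|X|\leq n\}})=\lim_n\hat\varphi(X\mathds1_{\{|X|\leq n\}})=\hat\varphi(X)$. The only cosmetic deviation is your appeal to uniform integrability of the sublevel sets when constructing the density, which the paper does not need: the countable convexity of $\{Z:\varphi^*_0(Z)\leq 1\}$ and an exhaustion maximizing $\PB(Z>0)$ suffice, so that step works under the Fatou property alone (cf.\ Remark~\ref{rem:SensitiveFatouEnough}).
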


A proof will be given in Section~\ref{sec:ChMeas}. Here we briefly
describe the basic idea.  We already know that $M^{\hat\varphi}_u$ is
a solid subspace of $L^0$, $\hat\varphi$ is well-defined and finite on
$M^{\hat\varphi}_u$ with $\hat\varphi|_{L^\infty}=\varphi_0$ and that
if $(\varphi,\Xs)$ is another Lebesgue extension of $\varphi_0$, then
$\Xs\subset M^{\hat\varphi}_u$ (Lemma~\ref{lem:Observation2}). It
remains only to show that $\hat\varphi$ has the Lebesgue property on
$M^{\hat\varphi}_u$ which implies also that for any $X\in \Xs\subset
M^{\hat\varphi}_u$, $\varphi(X)=\lim_n\varphi(X\mathds1_{\{|X|\leq
  n\}})=\lim_n\hat\varphi(X\mathds1_{\{|X|\leq
  n\}})=\hat\varphi(X)$. The key to the Lebesgue property of
$\hat\varphi$ on $M^{\hat\varphi}_u$ is that, after a suitable change
of measure, $M^{\hat\varphi}_u$ can be made into an
\emph{order-continuous Banach lattice} with the gauge norm induced by
$\hat\varphi$. Having established this, we can appeal to the extended
Namioka-Klee theorem that asserts that any \emph{finite} monotone
convex function on a Banach lattice is norm-continuous, and the
order-continuity of the norm then concludes the proof.

Our next interest is to understand the relation between three spaces
$M^{\hat\varphi}_u$, $M^{\hat\varphi}$ and $L^{\hat\varphi}$ as the
latter two seem more familiar. We already know, by definition,
$M^{\hat\varphi}_u\subset M^{\hat\varphi}\subset L^{\hat\varphi}$.  In
general, however, these inclusions may be strict as the following
examples illustrate.

\begin{example}[Classical Orlicz Spaces]
  \label{ex:ClassicalOrlicz}
  Let $\Phi$ and $\rho_\Phi$ be as in Example~\ref{ex:Modular} and put
  $\varphi_0=\rho_\Phi$.  Since $\rho_\Phi$ is continuous from below
  on $L^0$, we still have $\hat\varphi=\rho_\Phi$ on $L^0_+$ by
  Lemma~\ref{lem:HatPhiMon1}. Then clearly
  $M^{\hat\varphi}=M^\Phi\subset L^\Phi=L^{\hat\varphi}$, and the
  inclusion is strict if $(\Omega,\FC,\PB)$ is atomless and
  $\Phi(x)=e^{|x|}-1$. Furthermore in this case, we have
  $M^{\hat\varphi}_u=M^\Phi(=M^{\hat\varphi})$. Indeed, if $X\in
  M^\Phi$ ($\Leftrightarrow$ $\Phi(\alpha|X|)\in L^1$, $\forall
  \alpha>0$), then $\hat\varphi(\alpha|X|\mathds1_{\{|X|>N\}})
  =\EB[\Phi(\alpha|X|)\mathds1_{\{|X|>N\}}]\rightarrow 0$ by dominated
  convergence.
\end{example}

The next example shows that the inclusion $M^{\hat\rho}_u\subset
M^{\hat\rho}$ may be strict.
\begin{example}
  \label{ex:BadEx}
  Let $(\Omega,\FC)=(\NB,2^{\NB})$, with $\PB$ given by
  $\PB(\{n\})=2^{-n}$, and $(Q_k)_k$ a sequence of probabilities on
  $2^\NB$ given by $ Q_1(\{1\})=1$, $Q_n(\{1\})=1-1/n$ and
  $Q_n(\{n\})=1/n$ for each $n$. Then define
  $\varphi(X)=\sup_n\EB_{Q_n}[X]$. This is clearly monotone, convex,
  and positively homogeneous ($\varphi(\alpha X)=\alpha\varphi(X)$ for
  $\alpha\geq0$), hence $\varphi^*$ is $\{0,1\}$-valued. By
  Hahn-Banach, we see that $\varphi^*(Z)=0$ if and only if
  $Z\in\overline{\mathrm{conv}}(dQ_n/d\PB,n\in\NB)=:\ZC$, and it is
  clear that $\ZC$ is uniformly integrable (thus weakly compact), and
  $\varphi$ has the Lebesgue property on $L^\infty\simeq
  l^\infty$. Also, $\hat\varphi(X)=\sup_n\EB_{Q_n}[X]$ is valid for
  all $X\geq 0$.

  Now consider a non-negative function $X(k)=k$. Then
  $\EB_{Q_n}[X]=(1-1/n)+n\cdot(1/n)=2-1/n$, hence $\hat\varphi(\alpha
  |X|)=\alpha\sup_n\EB_{Q_n}[X]=2\alpha<\infty$, thus $X\in
  M^{\hat\varphi}$. On the other hand,
  $\EB_{Q_n}[X\mathds1_{\{X>N\}}]=\mathds1_{\{n>N\}}$, thus for any
  $\alpha>0$, $\hat\varphi(\alpha |X|\mathds1_{\{|X|>N\}})=\alpha
  \sup_n\EB_{Q_n}[X\mathds1_{\{X>N\}}]\equiv \alpha$ for all $
  N$. Hence $X\not\in M^{\hat\varphi}_u$, and consequently,
  $M^{\hat\varphi}_u\subsetneq M^{\hat\varphi}$.
\end{example}

We now state our second result, which well-explains the reason for the
subscript ``$u$''.

\begin{theorem}
  \label{thm:MuCharact}
  For $X\in M^{\hat\varphi}$, the following three conditions are
  equivalent:
  \begin{enumerate}
  \item $X\in M^{\hat\varphi}_u$;
  \item $\{XZ:\, \varphi^*_0(Z)\leq c\}$ is uniformly integrable for
    all $c>0$;

  \item for some $\varepsilon>0$,
    $\sup_{Z\in\dom\varphi^*_0}(\EB[(|X|\vee
    \varepsilon)YZ]-\varphi^*_0(Z))$ is attained for all $Y\in
    L^\infty$.
  \end{enumerate}
  Moreover, these three equivalent conditions imply that
  \begin{equation}
    \label{eq:MaxAttained1}
    \hat\varphi(X)=\max_{Z\in \dom\varphi^*_0}(\EB[XZ]-\varphi^*_0(Z)),
  \end{equation}
  i.e., the supremum in (\ref{eq:phihat1}) is attained.
\end{theorem}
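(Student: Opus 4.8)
The plan is to establish (1)$\Leftrightarrow$(2) by direct estimates and (2)$\Leftrightarrow$(3) by reducing $\phi(Y):=\hat\varphi((|X|\vee\varepsilon)Y)$ to the situation of Theorem~\ref{thm:JSTLinfty}. Throughout I abbreviate $\Gamma_c:=\{Z\in L^1:\varphi^*_0(Z)\leq c\}$ and lean on two standing facts: since $\varphi_0$ is Lebesgue, Theorem~\ref{thm:JSTLinfty} together with the Dunford--Pettis theorem makes every $\Gamma_c$ uniformly integrable (hence uniformly absolutely continuous); and since $X\in M^{\hat\varphi}$, the Young-type inequality (\ref{eq:YoungHat1}) gives $\EB[\alpha|X|Z]\leq\hat\varphi(\alpha|X|)+\varphi^*_0(Z)$ with $\hat\varphi(\alpha|X|)<\infty$ for every $\alpha>0$. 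For (1)$\Rightarrow$(2) I would first read off $L^1$-boundedness of $\{XZ:Z\in\Gamma_c\}$ from (\ref{eq:YoungHat1}); feeding a large $\alpha$ into (\ref{eq:YoungHat1}) and then the defining property of $M^{\hat\varphi}_u$ yields $\sup_{Z\in\Gamma_c}\EB[|X|\mathds1_{\{|X|>N\}}Z]\to0$ as $N\to\infty$, and splitting $\EB[|X|Z\mathds1_A]$ at the level $\{|X|\leq N\}$ (using uniform absolute continuity of $\Gamma_c$ for the bounded part) delivers uniform absolute continuity of $\{XZ:Z\in\Gamma_c\}$, i.e. (2). For (2)$\Rightarrow$(1) I would estimate $\hat\varphi(\alpha|X|\mathds1_{\{|X|>N\}})=\sup_Z(\alpha\EB[|X|\mathds1_{\{|X|>N\}}Z]-\varphi^*_0(Z))$ by splitting the supremum at $\varphi^*_0(Z)=c$: on $\{\varphi^*_0>c\}$ a second use of (\ref{eq:YoungHat1}) with coefficient $2\alpha$ bounds the bracket by $\frac12\hat\varphi(2\alpha|X|)-\frac c2$, negative once $c$ is large since $X\in M^{\hat\varphi}$; on $\{\varphi^*_0\leq c\}$ the bracket is at most $\alpha\sup_{Z\in\Gamma_c}\EB[|X|\mathds1_{\{|X|>N\}}Z]$, which tends to $0$ by (2) because $\PB(|X|>N)\to0$.

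The heart of the matter is (2)$\Leftrightarrow$(3). Put $U:=|X|\vee\varepsilon$, so $U\geq\varepsilon>0$, $U\in M^{\hat\varphi}$, and---crucially---$1/U\leq1/\varepsilon$ is bounded. I would study $\phi(Y):=\hat\varphi(UY)$ on $L^\infty$: since $|UY|\leq\|Y\|_\infty(|X|+\varepsilon)$ and $M^{\hat\varphi}$ is a solid space, $\phi$ is a finite monotone convex function, and it is Fatou because it is a supremum of the $\sigma(L^\infty,L^1)$-continuous affine maps $Y\mapsto\EB[Y\,UZ]-\varphi^*_0(Z)$ (each $UZ\in L^1$ by (\ref{eq:YoungHat1})). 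The substitution $W=UZ$ is a bijection of $\dom\varphi^*_0$, and boundedness of $1/U$ makes $W\mapsto W/U$ norm-continuous on $L^1$; hence $\phi^*(W)=\varphi^*_0(W/U)$ is proper convex and weakly lower semicontinuous and $\phi(Y)=\sup_W(\EB[YW]-\phi^*(W))$, so that $\{W:\phi^*(W)\leq c\}=\{UZ:Z\in\Gamma_c\}$. Now condition (3)---attainment of $\sup_Z(\EB[UYZ]-\varphi^*_0(Z))$ for every $Y$---is precisely condition (3) of Theorem~\ref{thm:JSTLinfty} for $\phi$, which that theorem equates with weak compactness, equivalently (Dunford--Pettis) uniform integrability, of $\{UZ:Z\in\Gamma_c\}$ for all $c$. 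Finally $|XZ|=|X|Z\leq UZ\leq|X|Z+\varepsilon Z$ shows, by domination and stability of uniform integrability under sums, that $\{UZ:Z\in\Gamma_c\}$ is uniformly integrable iff $\{XZ:Z\in\Gamma_c\}$ is, i.e. iff (2) holds.

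For the concluding ``moreover'' I would rerun the attainment argument with weight $X$ in place of $UY$: along a maximizing sequence $(Z_n)$ for $\hat\varphi(X)=\sup_Z(\EB[XZ]-\varphi^*_0(Z))$, inequality (\ref{eq:YoungHat1}) with coefficient $2$ and $\hat\varphi(2|X|)<\infty$ force $\varphi^*_0(Z_n)\leq c$, so $(Z_n)\subset\Gamma_c$ is relatively weakly compact; passing to a weak limit $Z^\ast$, using (2) to pass $\EB[XZ_n]\to\EB[XZ^\ast]$ against the unbounded weight $X$, and using weak lower semicontinuity of $\varphi^*_0$ gives $\EB[XZ^\ast]-\varphi^*_0(Z^\ast)\geq\hat\varphi(X)$, so the supremum in (\ref{eq:phihat1}) is attained.

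I expect the principal obstacle to be the identification $\phi^*(W)=\varphi^*_0(W/U)$ and the resulting equality $\{\phi^*\leq c\}=\{UZ:Z\in\Gamma_c\}$: this is exactly where the hypothesis $U\geq\varepsilon>0$ is indispensable, since boundedness of $1/U$ is what keeps $W\mapsto W/U$ continuous, renders the conjugate weakly lower semicontinuous, and lets Theorem~\ref{thm:JSTLinfty} apply to $\phi$ verbatim. A recurring technical point, needed both there and in the ``moreover'', is the elementary lemma that weak $L^1$-convergence $Z_n\to Z$ together with uniform integrability of $\{WZ_n\}$ forces $\EB[WZ_n]\to\EB[WZ]$ for an unbounded weight $W$; I would dispatch it by truncating $W$ at a level $M$, using weak convergence on the bounded part and the uniform integrability to control the tail uniformly in $n$.
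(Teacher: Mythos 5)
Your proposal is correct, and it diverges from the paper's proof in one genuinely interesting place while coinciding with it in substance elsewhere. For (1)$\Rightarrow$(2), your direct splitting at $\{|X|\leq N\}$ is the same estimate the paper runs in contrapositive form through the gauge norm (Lemmas~\ref{lem:SeminormEmbedding} and \ref{lem:ChMphiUNorm1}); no essential difference. For (2)$\Leftrightarrow$(3) and the attainment, you have in effect reproduced the paper's mechanism: your auxiliary function $\phi(Y)=\hat\varphi((|X|\vee\varepsilon)Y)$ has conjugate $\phi^*(W)=\varphi^*_0(W/U)$, which is exactly the paper's change of variable $g_U$ in (\ref{eq:GX}), and your appeal to Theorem~\ref{thm:JSTLinfty} for $\phi$ unwinds, in the (3)$\Rightarrow$(2) direction, to the coercivity of $g_U$ (Lemma~\ref{lem:Coercive}) plus the coercive James theorem (Theorem~\ref{thm:James}) --- the paper's own route --- while in the (2)$\Rightarrow$(3) direction the paper proves weak upper semicontinuity and weak compactness of the level sets $\Lambda_{\beta,U,Y}$ by hand (Lemma~\ref{lem:ProofUI1}) instead of citing JST; your maximizing-sequence argument for (\ref{eq:MaxAttained1}), including the truncation lemma that passes $\EB[XZ_n]\to\EB[XZ^\ast]$ under weak convergence and uniform integrability of $\{XZ_n\}$, is precisely the closedness step inside that lemma. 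The genuine departure is (2)$\Rightarrow$(1): the paper obtains $\lim_N\hat\varphi(\alpha|X|\mathds1_{\{|X|>N\}})=0$ by a minimax theorem (Theorem~\ref{thm:AppMinimax}) applied to $f(Y,Z)=\EB[|X|YZ]-\varphi^*_0(Z)$ over the convex hull of the tail indicators, with the compactness hypothesis again resting on Lemma~\ref{lem:ProofUI1}; you instead split the dual supremum at the level $\varphi^*_0(Z)=c$ with $c>\hat\varphi(2\alpha|X|)$, so that the high-level regime is uniformly negative by (\ref{eq:YoungHat1}) and the regime $\{Z:\varphi^*_0(Z)\leq c\}$ is killed by the uniform integrability in (2), sandwiching with $\hat\varphi(\alpha|X|\mathds1_{\{|X|>N\}})\geq\hat\varphi(0)=0$ (valid since $\varphi^*_0\geq 0$ under $\varphi_0(0)=0$). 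This is strictly more elementary --- no minimax theorem and no compact-level-set lemma are needed --- and it suffices because only an upper bound on the $\limsup$ is required; the paper's minimax route buys the exact inf--sup exchange identity at the cost of the appendix machinery. Your flagged hypothesis is used exactly where you say: $U\geq\varepsilon$ gives $L^\infty\subset UL^\infty$, whence $\phi^*(W)\geq\varphi^*_0(W/U)$ by restricting the defining supremum to bounded arguments, the reverse inequality being Young's inequality for $\hat\varphi$ on $M^{\hat\varphi}$; this mirrors the paper's normalization $U=|X/\varepsilon|\vee 1\geq 1$.
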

We prove this theorem in Section~\ref{sec:MvarphiU}.

\subsection{Characterization of Lebesgue Property on Solid Spaces}
\label{sec:JST}

Here we apply our results to obtain a characterization of the Lebesgue
property of finite monotone convex functions on \emph{arbitrary solid
  spaces} in the spirit of Theorem~\ref{thm:JSTLinfty} for the
$L^\infty$ case.  Suppose we are given a solid space $\Xs\subset L^0$
and a finite monotone convex function $\psi:\Xs\rightarrow \RB$ with
the \emph{Fatou property} (not Lebesgue at now). Then the restriction
$\psi_\infty:=\psi|_{L^\infty}$ is a finite monotone convex function
on $L^\infty$ having the Fatou property too, and putting
$\psi_\infty^*(Z)=\sup_{X\in L^\infty}(\EB[XZ]-\psi_\infty(X))$,
\begin{equation}
  \label{eq:psiHat}
  \hat\psi(X):=\sup_{Z\in \dom\psi_\infty^*}(\EB[XZ]-\psi_\infty^*(Z)),
\end{equation}
defines an extension of $\psi_\infty$ to $\DC_\psi:=\{X\in L^0:\,
X^-Z\in L^1,\,\forall Z\in\dom\psi_\infty^*\}\supset L^0_+\cup
L^\infty$ by the Fatou property. Note that the monotonicity
($\Rightarrow$ $\dom\psi^*_\infty\subset L^1_+$) and the finiteness of
$\psi$ on the whole $\Xs$ implies $\Xs\subset
\DC_\psi\cap(-\DC_\psi)$, or equivalently,
\begin{equation}
  \label{eq:OrderContiDual1}
  XZ\in L^1,\,\forall X\in \Xs,\,Z\in \dom\psi^*_\infty.
\end{equation}
Thus $\hat\psi$ is well-defined on $\Xs$ in particular. Indeed,
observe that $\EB[|X|Z]-\psi_\infty^*(Z)=\sup_n(\EB[|X|\wedge n
Z]-\psi_\infty^*(Z))\leq \sup_n\psi(|X|\wedge n)\leq\psi(|X|)<\infty$
for $X\in\Xs$ and $Z\in \dom\psi_\infty^*$ where we used Young's
inequality for the pair $(\psi|_{L^\infty},\psi^*_\infty)$.

On the other hand, the original $(\psi,\Xs)$ is also an extension of
$\psi_\infty$ since the latter is the restriction of $\psi$. Then
close comparisons of these two extensions using
Theorems~\ref{thm:MainLebExt1}~and~\ref{thm:MuCharact} yield the
following generalization of the JST Theorem~\ref{thm:JSTLinfty}:
\begin{theorem}[Generalization of JST-Theorem \citep{jouini06:_law_fatou}]
  \label{thm:JSTGeneral}
  Let $\Xs\subset L^0$ be a solid space containing the constants and
  $\psi:\Xs\rightarrow\RB$ be a finite-valued monotone convex function
  satisfying the Fatou property (\ref{eq:FatouX2}) on $\Xs$. Then the
  following are equivalent:
  \begin{enumerate}
  \item $\psi$ has the Lebesgue property (\ref{eq:LebX2}) on $\Xs$;
  \item for all $X\in \Xs$ and $c>0$, $\{XZ: \psi_\infty^*(Z)\leq c\}$
    is uniformly integrable;
  \item the supremum
    $\sup_{Z\in\dom\psi_\infty^*}(\EB[XZ]-\psi_\infty^*(Z))$ is finite
    and attained for all $X\in \Xs$;
  \item it holds that $\psi(X)=\max_{Z\in
      \dom\psi_\infty^*}(\EB[XZ]-\psi^*_\infty(Z))$, $\forall
    X\in\Xs$.
  \end{enumerate}
\end{theorem}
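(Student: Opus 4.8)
The plan is to leverage the main results already established, Theorems~\ref{thm:MainLebExt1} and~\ref{thm:MuCharact}, by comparing the given function $\psi$ against the maximum Lebesgue extension of its own restriction $\psi_\infty=\psi|_{L^\infty}$. Since $\psi$ has the Fatou property and is finite on $\Xs$, the restriction $\psi_\infty$ is a finite monotone convex function on $L^\infty$ with the Fatou property. The subtle point, however, is that Theorems~\ref{thm:MainLebExt1} and~\ref{thm:MuCharact} operate under Assumption~\ref{as:Standing}, which requires $\psi_\infty$ to have the \emph{Lebesgue} property on $L^\infty$---something we do not yet know. So the very first step I would take is to establish this: I claim that the Lebesgue property of $\psi$ on the strictly larger space $\Xs$ (condition (1)), and indeed each of the equivalent conditions, forces $\psi_\infty$ to be Lebesgue on $L^\infty$, at which point $\hat\psi$ (as defined in~(\ref{eq:psiHat})) coincides with the $\hat\varphi$ construction applied to $\varphi_0=\psi_\infty$ and we may invoke the machinery. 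For the direction starting from condition (1), this is immediate, since the restriction of a Lebesgue function to a smaller solid space is again Lebesgue (as noted in the text preceding Definition~\ref{dfn:LebExt}); but for the cycle to close I must ensure each hypothesis provides enough to make $\psi_\infty$ Lebesgue.

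The cleanest route I would pursue is to prove (1)~$\Rightarrow$~(2)~$\Rightarrow$~(3)~$\Rightarrow$~(4)~$\Rightarrow$~(1), routing everything through the $L^\infty$-theory. For (1)~$\Rightarrow$~(2): if $\psi$ is Lebesgue on $\Xs$, then $\psi_\infty$ is Lebesgue on $L^\infty$ by restriction, so Assumption~\ref{as:Standing} holds for $\varphi_0=\psi_\infty$ (after the harmless normalization $\psi_\infty(0)=0$). Moreover $(\psi,\Xs)$ is then itself a Lebesgue extension of $\psi_\infty$, so by the maximality part~(2) of Theorem~\ref{thm:MainLebExt1} we get $\Xs\subset M^{\hat\psi}_u$ and $\psi=\hat\psi|_{\Xs}$. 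In particular every $X\in\Xs$ lies in $M^{\hat\psi}_u\subset M^{\hat\psi}$, and condition~(2) of Theorem~\ref{thm:MuCharact} gives precisely the uniform integrability of $\{XZ:\psi_\infty^*(Z)\leq c\}$, which is our condition~(2). For (2)~$\Rightarrow$~(3) and (3)~$\Rightarrow$~(4), the same identification $M^{\hat\psi}_u\ni X$ applies together with the ``attainment'' conclusion~(\ref{eq:MaxAttained1}) of Theorem~\ref{thm:MuCharact}, once I have argued that condition~(2) or~(3) entails that $\psi_\infty$ is Lebesgue (so that the theorem is applicable) and that $\psi$ and $\hat\psi$ agree on $\Xs$. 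The attainment and finiteness in~(3) say exactly that the supremum defining $\hat\psi(X)$ is a maximum; combined with $\psi=\hat\psi$ on $\Xs$ this yields~(4).

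The main obstacle, and the step deserving the most care, is the implication (4)~$\Rightarrow$~(1) together with verifying that conditions~(2),~(3),~(4) each independently guarantee the Lebesgue property of $\psi_\infty$ on $L^\infty$ so that the theorems can be cited. For (4)~$\Rightarrow$~(1): the representation $\psi(X)=\max_{Z\in\dom\psi_\infty^*}(\EB[XZ]-\psi_\infty^*(Z))$ exhibits $\psi$ as a supremum of the affine (hence order-continuous) functionals $X\mapsto\EB[XZ]-\psi_\infty^*(Z)$ that is \emph{attained} for each $X$; I would show the pointwise attainment of the supremum upgrades the Fatou (lower-semicontinuous) inequality to the Lebesgue equality. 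Concretely, for a dominated a.s.-convergent sequence $X_n\to X$ with $|X_n|\leq Y\in\Xs_+$, the Fatou property already gives $\psi(X)\leq\liminf_n\psi(X_n)$; for the reverse, I would pick a maximizing $Z^*\in\dom\psi_\infty^*$ for $X$ and use the uniform integrability of $\{YZ:\psi_\infty^*(Z)\leq\psi_\infty^*(Z^*)\}$---which follows from condition~(2), or which I would need to extract from~(4) via the attainment at the dominating variable $Y$---to pass to the limit in $\EB[X_nZ^*]\to\EB[XZ^*]$ and dominate the competing terms, yielding $\limsup_n\psi(X_n)\leq\psi(X)$. Establishing that uniform-integrability control on the whole sublevel family (not just at a single $Z^*$) is the crux, and I expect to route it through condition~(2) by first closing the sub-cycle (2)~$\Leftrightarrow$~(3)~$\Leftrightarrow$~(4) and then proving any one of them implies~(1); restricting to $X\in L^\infty$ inside~(2) recovers weak compactness of $\{Z:\psi_\infty^*(Z)\leq c\}$ in $L^1$, which by Theorem~\ref{thm:JSTLinfty} is exactly the Lebesgue property of $\psi_\infty$, licensing every appeal to Theorems~\ref{thm:MainLebExt1} and~\ref{thm:MuCharact} throughout the argument.
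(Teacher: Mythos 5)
Your cycle breaks at (3) $\Rightarrow$ (4), and the break is exactly at the point the theorem is really about. Under (3) you can legitimately get: $\psi_\infty$ is Lebesgue on $L^\infty$ (restrict (3) to $L^\infty$ and use Theorem~\ref{thm:JSTLinfty}), and $\Xs\subset M^{\hat\psi}_u$ (by solidness, $(|X|\vee 1)Y\in\Xs$ for $Y\in L^\infty$, so (3) feeds condition (3) of Theorem~\ref{thm:MuCharact}). But from there, the Fatou property plus the Lebesgue property of $\hat\psi$ on $M^{\hat\psi}_u$ only yield the \emph{one-sided} inequality $\psi(X)\leq\liminf_n\psi(X\mathds1_{\{|X|\leq n\}})=\lim_n\hat\psi(X\mathds1_{\{|X|\leq n\}})=\hat\psi(X)$ on $\Xs$. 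The reverse inequality $\hat\psi\leq\psi$ amounts to Young's inequality $\EB[XZ]\leq\psi(X)+\psi^*_\infty(Z)$ for \emph{signed} $X\in\Xs$, which --- as the paper explicitly warns just before the Corollary in Section~3, and in the remark that (4) is not a paraphrase of (3) --- is \emph{not} guaranteed by the Fatou property alone; it is essentially equivalent to the Lebesgue property one is trying to prove. Your proposal simply asserts ``combined with $\psi=\hat\psi$ on $\Xs$ this yields (4),'' and your fallback plan of ``first closing the sub-cycle (2) $\Leftrightarrow$ (3) $\Leftrightarrow$ (4)'' is circular at the same spot: (4) $\Rightarrow$ (3) is trivial and (3) $\Rightarrow$ (2) goes through Theorem~\ref{thm:MuCharact}, but getting \emph{into} (4) from (2)/(3) is precisely the missing step. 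The paper's route is to prove (3) $\Rightarrow$ (1) \emph{first}, and this requires genuinely new machinery beyond Theorems~\ref{thm:MainLebExt1} and~\ref{thm:MuCharact}: Lemma~\ref{lem:JSTsensitive2} (a convexity argument showing $\psi$ is well defined modulo $\QB$-null sets, so that $\psi_\QB$ makes sense on the quotient) and Lemma~\ref{lem:JST3} (since $\Xs(\QB)$ need not be norm closed in $M^{\hat\psi}_u(\QB)$, one passes to the principal band $B_\eta(\QB)$ generated by the dominating element, extends $\psi_\QB$ to it by the truncation formula (\ref{eq:BandExtension}), checks finiteness, and applies the extended Namioka--Klee theorem on that order-continuous Banach lattice). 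Only after (1) is in hand does $\psi=\hat\psi|_\Xs$ --- hence (4) --- follow. Your proposal contains no substitute for this argument.

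On the salvageable parts: your (1) $\Rightarrow$ (2) $\Rightarrow$ (3) coincides with the paper's proof, and your sketch of (4) $\Rightarrow$ (1) is sound in outline and even more elementary than the paper's band construction \emph{for that particular arrow}, because (4) hands you $\psi=\hat\psi$ on $\Xs$ with attainment by hypothesis: pick maximizers $Z_n$ at $X_n$ (not, as you first suggest, a single maximizer $Z^*$ at the limit $X$ --- a maximizer at $X$ cannot bound $\psi(X_n)$ from above), derive the uniform bound $\psi^*_\infty(Z_n)\leq\psi(2Y)-2\psi(-Y)$ from Young's inequality on the positive cone (valid by monotone convergence), then use the uniform integrability of $\{2YZ:\psi^*_\infty(Z)\leq c\}$ --- available since (4) $\Rightarrow$ (3) $\Rightarrow$ (2) --- to get $\EB[|X_n-X|Z_n]\rightarrow 0$ and hence $\limsup_n\psi(X_n)\leq\hat\psi(X)=\psi(X)$, with Fatou supplying the other inequality. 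But since the only entry into (4) in your scheme is the unproven (3) $\Rightarrow$ (4), the equivalence of the four conditions is not established.
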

A proof is given in Section~\ref{sec:JST}.  Note that (4) is not a
paraphrasing of (3) since it is not \emph{a priori} assumed that
$\psi(X)=\sup_{Z\in
  \dom\psi^*_\infty}(\EB[XZ]-\psi^*_\infty(Z))=\hat\psi(X)$ for all
$X\in \Xs$.

When $\Xs=L^\infty$, then $\psi=\hat\psi$ hence (3) $\Leftrightarrow$
(4) is trivial, and (2) is equivalent to saying that $\{Z\in L^1:\,
\psi^*_\infty(Z)\leq c\}$ is $\sigma(L^1,L^\infty)$-compact for all
$c>0$ by the Dunford-Pettis theorem. Thus, in this case,
Theorem~\ref{thm:JSTGeneral} is nothing but
Theorem~\ref{thm:JSTLinfty} which is essentially due to
\citep{jouini06:_law_fatou} and \citep{MR2648597}.  Some other
(partial) generalizations of Theorem~\ref{thm:JSTLinfty} have been
obtained in literature, so we briefly discuss here some key features
of \emph{our} version.

\paragraph{\bf Generality of the space $\Xs$}

The only a priori assumption on the space $\Xs$ is that it is a solid
vector subspace (ideal) of $L^0$ containing the constants. All Orlicz
spaces and hearts as well as $L^p$ with $p\in [0,\infty]$ are of this
type. Note also that without the solidness, the Lebesgue and Fatou
properties do not ``well'' make sense.

\paragraph{\bf Our formulation is ``universal''}

We note that topological qualifications (of $\Xs$ and $\psi$) are
absent in our formulation: $\psi^*_\infty=(\psi|_{L^\infty})^*$ is
used instead of the conjugate of $\psi$ on the topological dual of
$\Xs$, the \emph{inf-compactness} of the conjugate is alternatively
stated in a form of uniform integrability, and the Fatou and Lebesgue
properties are regularities in terms of order structure. These
ingredients are in some sense more ``universal'' than the topological
counter-parts. It should also be emphasized that our characterization
is still quite explicit even though it does not rely on the
topological nature of $\Xs$.

\begin{remark}\label{rem:Alternative}
  Theorem~\ref{thm:JSTGeneral} can be alternatively stated in terms of
  the \emph{order-continuous dual} of $\Xs$, which is regarded, under
  our assumption on $\Xs$, as the set
  \begin{equation}
    \label{eq:OderContiDual}
    \Xs^\sim_n=\{Z\in L^0:\, XZ\in L^1,\,\forall X\in \Xs\}.
  \end{equation}
  via the identification of $Z$ and the order-continuous linear
  functional $X\mapsto \EB[XZ]$.  Observe that
  $\dom\psi^*_\infty\subset \Xs^\sim_n\subset L^1$ by $L^\infty\subset
  \Xs$ and (\ref{eq:OrderContiDual1}), thus ``$\dom\psi^*_\infty$'' in
  the statements can be replaced by $\Xs^\sim_n$. In particular, the
  Lebesgue property of $\psi$ implies the ``simplified dual
  representation'' on $\Xs^\sim_n$ with the penalty function
  $\psi^*_\infty$ (see \citep{MR2648595}) without
  any structural assumption on the space $\Xs$ (than being an ideal of
  $L^0$). Also, item (2) is in fact equivalent to the relative
  compactness of all the level sets $\{Z\in\Xs^\sim_n:\,
  \psi^*_\infty(Z)\leq c\}$ for the weak topology
  $\sigma(\Xs^\sim_n,\Xs)$, which is a (well-defined) locally convex
  Hausdorff topology as long as $\Xs$ contains the constants as we are
  assuming.
\end{remark}

Given the above discussion, it seems also natural (and more common) to
characterize the Lebesgue property in the form of
Theorem~\ref{thm:JSTGeneral} but with the conjugate
\begin{equation}
  \label{eq:ConjPsiUsual}
  \psi^*(Z):=\sup_{X\in \Xs}(\EB[XZ]-\psi(X)),\,Z\in \Xs^\sim_n
\end{equation}
instead of $\psi^*_\infty$. In fact, the equivalence of (1) -- (4) in
Theorem~\ref{thm:JSTGeneral} remains true (see
\citep{owari13:_lebes_monot_convex_funct}) with $\psi^*$ instead of
$\psi^*_\infty$ if (a) $\Xs\subset L^1(\QB)$ for some $\QB\sim\PB$ and
if (b) \emph{$\psi$ is a priori assumed to be
  $\sigma(\Xs,\Xs^\sim_n)$-lower semicontinuous or equivalently}
\begin{equation}
  \label{eq:FatouFrechet}
  \psi(X)
  =\sup_{Z\in\Xs^\sim_n}(\EB[XZ]-\psi^*(Z)),\,\forall X\in\Xs.
\end{equation}
Here (a) is rather technical, which says simply that $\Xs^\sim_n$
separates $\Xs$, and only the equivalence ``$\QB\sim\PB$'' is
essential since that $\Xs$ accommodates a \emph{finite} monotone
convex function $\psi$ with the Fatou property already implies the
existence of $\QB\ll \PB$ such that $\Xs\subset L^1(\QB)$. The
assumption (b) ($\Leftrightarrow$ (\ref{eq:FatouFrechet})) implies the
Fatou property (see \citep[][Proposition~1]{MR2648595}), but the
converse is not generally true, and (b) may not be easy to check. In
some ``good'' cases, however, (b) is actually equivalent to the Fatou
property, and the ``good'' cases include $\Xs=L^\infty$ ($\Rightarrow$
$\Xs^\sim_n=L^1$), $\Xs=M^\Phi$ with finite Young function $\Phi$
(then $\Xs^\sim_n=L^{\Phi^*}$), and $\Xs=L^\Phi$ with $\Phi$
satisfying the so-called $\Delta_2$-condition (then
$L^\Phi=M^\Phi$). For more general $\Xs$, however, it is still open
when the Fatou property implies the $\sigma(\Xs,\Xs^\sim_n)$-lower
semicontinuity for all convex functions.

\begin{remark}\label{rem:BF}
  The above question is equivalent to asking if all \emph{order closed
    convex} subsets of $\Xs$ are $\sigma(\Xs,\Xs^\sim_n)$-closed. This
  is true as soon as it is shown that any
  $\sigma(\Xs,\Xs^\sim_n)$-convergent net $(X_\alpha)_\alpha$ in $\Xs$
  admits a sequence of indices $(\alpha_n)_n$ as well as a sequence
  $\tilde X_n\in \mathrm{conv}(X_{\alpha_n},X_{\alpha_{n+1}},\ldots)$
  which converges \emph{in order} to the same limit. In
  \citep[][Lemma~6 and Corollary~4]{MR2648595}, it is claimed that the
  last property is true whenever (adapted to our notation) $\Xs$ is
  (lattice homomorphic to) and ideal of $L^1$ (hence of
  $L^0$). Unfortunately, however, their proof has an error. There it
  is shown that with the above assumption, any
  $\sigma(\Xs,\Xs^\sim_n)$-convergent net $(X_\alpha)_\alpha$ admits a
  sequence $(\tilde X_n)_n$ of forward convex combinations of the
  above form which, as a sequence in $L^1$, converges \emph{in order
    of $L^1$} to the same limit. This part is correct. Then it was
  concluded that $\tilde X_n$, \emph{as a sequence in $\Xs$},
  converges \emph{in order of $\Xs$} to the same limit. The last part
  is not true at least solely from the assumptions imposed on
  $\Xs$. In general, whenever $\Xs$ is an ideal of $L^0$, the order
  convergence in $\Xs$ of a sequence $(X_n)_n$ is equivalent to the
  dominated a.s. convergence (i.e., $X_n\rightarrow X$ a.s. and
  $\exists Y\in \Xs_+$ with $|X_n|\leq Y$ ($\forall n$)). The
  a.s. convergence is universal (which is common to all ideals of
  $L^0$), while being dominated in $\Xs$ is not universal. For a
  trivial example, picking $Z\in L^1_+\setminus L^\infty$, the
  sequence $X_n=Z\wedge n$ which lies in $L^\infty$ converges in order
  in $L^1$ to $Z$, but does not converge in order in $L^\infty$. What
  we need to fill the gap is still an open question (for us).
\end{remark}


\begin{remark}
  When $\Phi^*$ is finite, \citep{orihuela_ruiz12:_lebes_orlic}
  recently obtained the equivalence of (1) -- (4) with $\psi^*$ for
  $\Xs=L^\Phi$, but with an even stronger assumption than
  (\ref{eq:FatouFrechet}) that $\psi$ is
  $\sigma(L^\Phi,M^{\Phi^*})$-lower semicontinuous (note in this case
  that $\Xs^\sim_n=L^{\Phi^*}$ which is strictly bigger than
  $M^{\Phi^*}$ if the probability space is atomless and $\Phi$ does
  not satisfy the $\Delta_2$-condition). When $\Xs$ is a locally
  convex Fréchet lattice, the implication (1) $\Rightarrow$ (4) is
  (implicitly) contained in \citep[][Lemma~7]{MR2648595}. For the
  equivalence of (1) -- (4) with $\psi^*$ for general solid space
  $\Xs$ containing the constants under the assumptions (a) and (b)
  above, see \citep{owari13:_lebes_monot_convex_funct}.
\end{remark}

Note that with the standing assumptions of
Theorem~\ref{thm:JSTGeneral} only, the inequality $\EB[XZ]\leq
\psi(X)+\psi^*_\infty(Z)$ is not guaranteed \emph{for all $X\in\Xs$}
and $Z\in \Xs^\sim_n$ (it is true for $X\in \Xs_+\cup
L^\infty$). However, if $\psi$ has the Lebesgue property, we see that
$\EB[XZ] =\lim_n\EB[X\mathds1_{\{|X|\leq n\}}Z]\leq \limsup_n
\psi(X\mathds1_{\{|X|\leq n\}})+\psi^*_\infty(Z)
=\psi(X)+\psi^*_\infty(Z)$. Thus (1) $\Rightarrow$ (4) shows that

\begin{corollary}
  For a finite monotone convex function $\psi$ on a solid vector space
  $\Xs\subset L^0$, the Lebesgue property implies the existence of a
  $\sigma$-additive subgradient of $\psi$ at everywhere on $\Xs$, that
  is, for all $X\in \Xs$, there exists a $Z\in\Xs^\sim_n\subset L^1$
  such that
  \begin{align*}
    \EB[XZ]-\psi(X)\geq \EB[YZ]-\psi(Y),\,\forall Y\in \Xs.
  \end{align*}
\end{corollary}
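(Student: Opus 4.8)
The plan is to read off the desired subgradient directly from item~(4) of Theorem~\ref{thm:JSTGeneral}. Fix $X\in\Xs$. Since $\psi$ is assumed to have the Lebesgue property, that theorem gives
\[
\psi(X)=\max_{Z\in\dom\psi^*_\infty}\bigl(\EB[XZ]-\psi^*_\infty(Z)\bigr),
\]
so in particular the supremum is \emph{attained}; I would pick a maximizer $Z^*\in\dom\psi^*_\infty$, so that $\psi^*_\infty(Z^*)=\EB[XZ^*]-\psi(X)$. By Remark~\ref{rem:Alternative} one has $\dom\psi^*_\infty\subset\Xs^\sim_n\subset L^1$, so $Z^*$ is already an element of the order-continuous dual $\Xs^\sim_n$ and hence of the required type.

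The second ingredient I would invoke is the Young-type inequality $\EB[YZ^*]\leq\psi(Y)+\psi^*_\infty(Z^*)$, valid for \emph{every} $Y\in\Xs$. This is precisely the estimate established in the paragraph preceding the corollary: using the Lebesgue property one truncates $Y$ by $Y\mathds1_{\{|Y|\leq n\}}\in L^\infty$, applies the ordinary Young inequality for the pair $(\psi|_{L^\infty},\psi^*_\infty)$ to each truncation, and passes to the limit via $\EB[YZ^*]=\lim_n\EB[Y\mathds1_{\{|Y|\leq n\}}Z^*]$ together with $\psi(Y)=\lim_n\psi(Y\mathds1_{\{|Y|\leq n\}})$. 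The Lebesgue property is genuinely needed here, since without it the inequality $\EB[YZ]\leq\psi(Y)+\psi^*_\infty(Z)$ is guaranteed a priori only for $Y\in\Xs_+\cup L^\infty$.

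Combining the two ingredients then finishes the argument: for any $Y\in\Xs$,
\[
\EB[YZ^*]-\psi(Y)\leq\psi^*_\infty(Z^*)=\EB[XZ^*]-\psi(X),
\]
which is exactly the subgradient inequality $\EB[XZ^*]-\psi(X)\geq\EB[YZ^*]-\psi(Y)$ for all $Y\in\Xs$. Since $Z^*\in\Xs^\sim_n\subset L^1$ represents the order-continuous (hence $\sigma$-additive) functional $Y\mapsto\EB[YZ^*]$, it is the required $\sigma$-additive subgradient at $X$. I do not expect any real obstacle beyond Theorem~\ref{thm:JSTGeneral} itself: the two pieces of substance — attainment of the conjugate supremum and the extension of Young's inequality from $\Xs_+\cup L^\infty$ to all of $\Xs$ — are both supplied by the Lebesgue property through that theorem and the preceding remark, so the corollary is a purely formal consequence.
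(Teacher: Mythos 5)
Your proposal is correct and coincides with the paper's own argument: the paper likewise extends Young's inequality $\EB[YZ]\leq\psi(Y)+\psi^*_\infty(Z)$ to all $Y\in\Xs$ by the truncation $Y\mathds1_{\{|Y|\leq n\}}$ together with the Lebesgue property, and then combines this with the attainment statement (1) $\Rightarrow$ (4) of Theorem~\ref{thm:JSTGeneral} to produce the maximizer as the $\sigma$-additive subgradient. Your additional observations (that the maximizer lies in $\Xs^\sim_n\subset L^1$ via Remark~\ref{rem:Alternative}, and that dominated convergence justifies $\EB[YZ^*]=\lim_n\EB[Y\mathds1_{\{|Y|\leq n\}}Z^*]$ since $YZ^*\in L^1$) are exactly the implicit steps in the paper's one-line derivation.
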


\section{Analysis of the space $M^{\hat\varphi}_u$ and Proof of
  Theorem~\ref{thm:MainLebExt1}}
\label{sec:ProofMaxLebesgue}

\emph{Throughout this section, Assumption~\ref{as:Standing} is in
  force unless the contrary is explicitly stated}. The key to the
proof of Theorem~\ref{thm:MainLebExt1} is the analysis of the
Orlicz-type space $M^{\hat\varphi}_u$.

\subsection{The Gauge of $\hat\varphi$}
\label{sec:Gauge}

Let us define the \emph{gauge} of the monotone convex function
$\hat\varphi$:
\begin{equation}
  \label{eq:Gauge}
  \|X\|_{\hat\varphi}:=\inf\{\lambda>0:\, \hat\varphi(|X|/\lambda)\leq 1\},\,\forall X\in L^0,
\end{equation}
with the convention $\inf\emptyset=+\infty$.  In analogy to the
Luxemburg norms of usual Orlicz spaces, we see easily that for any
$X,Y\in L^0$ and $\alpha \in\RB$,
\begin{equation}
  \label{eq:Seminorm1}
  \|\alpha X\|_{\hat\varphi}=|\alpha|\|X\|_{\hat\varphi},\, \|X+Y\|_{\hat\varphi}\leq \|X\|_{\hat\varphi}+\|Y\|_{\hat\varphi} 
  \text{ and } \|X\|_{\hat\varphi}\leq \|Y\|_{\hat\varphi}\text{ if }|X|\leq |Y|.
\end{equation}
Indeed, the first (resp. last) one follows from a change of variable
$\lambda'=\lambda/\alpha$ (resp. monotonicity of $\hat\varphi$), while
the convexity and monotonicity of $\hat\varphi$ implies that for any
$\alpha\in (0,1)$,
\begin{align*}
  \hat\varphi\left(\frac{|\alpha X+(1-\alpha)
      Y|}{\alpha\lambda+(1-\alpha)\lambda'}\right)\leq \frac{\alpha
    \lambda}{\alpha\lambda+(1-\alpha)\lambda'}\hat\varphi\left(\frac{|X|}\lambda\right)
  +\frac{(1-\alpha)\lambda'}{\alpha\lambda+(1-\alpha)\lambda'}
  \hat\varphi\left(\frac{|Y|}{\lambda'}\right),
\end{align*}
hence $\{\alpha \lambda+(1-\alpha)\lambda':\,
\lambda,\lambda'>0,\,\hat\varphi(|X|/\lambda),\,\hat\varphi(|Y|/\lambda')\leq
1\} \subset \{\beta>0:\, \hat\varphi(|\alpha X+(1-\alpha)Y|/\beta)\leq
1\}$. We have also that
\begin{align}\label{eq:SeminormFinite}
  & \|X\|_{\hat\varphi}<\infty \text{ if and only if } X\in
  L^{\hat\varphi};\\
  \label{eq:SeminormZero}
  &\|X\|_{\hat\varphi}=0\text{ if and only if }
  \hat\varphi(\alpha|X|)=0,\,\forall  \alpha>0;\\
  \label{eq:SeminormConvergence}
  &\|X_n\|_{\hat\varphi}\rightarrow 0\text{ if and only if
  }\hat\varphi(\alpha|X_n|)\rightarrow 0,\,\forall \alpha>0.
\end{align}
The necessity of (\ref{eq:SeminormFinite}) is clear from the
definition while the convexity of $\hat\varphi$ and $\hat\varphi(0)=0$
imply that $\hat\varphi(\varepsilon\alpha|X|)\leq
\varepsilon\hat\varphi(\alpha|X|)
=\varepsilon\hat\varphi(\alpha|X|)\downarrow0$ whenever
$\hat\varphi(\alpha|X|)<\infty$.  The sufficiency of
(\ref{eq:SeminormZero}) is again immediate from (\ref{eq:Gauge}), and
$\|X\|_{\hat\varphi}=0$ implies that $\hat\varphi(\alpha|X|)\leq
\varepsilon\hat\varphi((\alpha/\varepsilon)|X|)\leq \varepsilon$ for
any $\varepsilon\in (0,1)$, hence $\hat\varphi(\alpha|X|)=0$. Finally,
(\ref{eq:SeminormConvergence}) follows from the relations
$\|X\|_{\hat\varphi}<\varepsilon$ $\Rightarrow$
$\hat\varphi(|X|/\varepsilon)\leq 1$ $\Rightarrow$
$\|X\|_{\hat\varphi}\leq \varepsilon$, and $\hat\varphi(\alpha|X|)\leq
\varepsilon\alpha \hat\varphi(|X|/\varepsilon)\leq \varepsilon\alpha$
if $\varepsilon<1/\alpha$.

In general, any $\RB$-valued function $p$ on a Riesz space verifying
the three conditions of (\ref{eq:Seminorm1}) is called a
\emph{lattice} seminorm. In view of (\ref{eq:SeminormFinite}), we have
seen that $\|\cdot\|_{\hat\varphi}$ is a lattice seminorm on
$L^{\hat\varphi}$ (hence on $M^{\hat\varphi}$ and $M^{\hat\varphi}_u$
as well).

Note that we have used only three properties of $\hat\varphi$ so far,
namely, convexity, monotonicity and $\hat\varphi(0)=0$, so the
arguments above still work for any monotone convex function on $L^0_+$
null at the origin. Now the continuity from below of $\hat\varphi$
(Lemma~\ref{lem:HatPhiMon1}) shows:
\begin{lemma}
  \label{lem:LatticeSemiNorm1}
  For any $\alpha>0$, $\|X\|_{\hat\varphi}\leq\alpha$ if and only if
  $\hat\varphi(|X|/\alpha)\leq 1$, and
  \begin{equation}
    \label{eq:NormLSC}
    X_n\rightarrow X \text{ a.s. }\Rightarrow\, \|X\|_{\hat\varphi}\leq
    \liminf_n\|X_n\|_{\hat\varphi}.
  \end{equation}

\end{lemma}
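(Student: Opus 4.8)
The plan is to prove the two assertions of Lemma~\ref{lem:LatticeSemiNorm1} separately, with the first (the ``$\leq$'' characterization of the gauge) being the technical core, and the second (the lower semicontinuity \eqref{eq:NormLSC}) following almost immediately from it.

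\textbf{First claim: the infimum is attained.} For the equivalence ``$\|X\|_{\hat\varphi}\leq\alpha$ iff $\hat\varphi(|X|/\alpha)\leq 1$'', the nontrivial direction is to show that $\|X\|_{\hat\varphi}\leq\alpha$ forces $\hat\varphi(|X|/\alpha)\leq 1$; the converse is immediate from the definition \eqref{eq:Gauge}. So suppose $\|X\|_{\hat\varphi}\leq\alpha$. By definition of the infimum there is a sequence $\lambda_n\downarrow$ (some limit $\leq\alpha$) with $\hat\varphi(|X|/\lambda_n)\leq 1$ for each $n$, and we may take $\lambda_n>\alpha$ with $\lambda_n\downarrow\alpha$. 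The idea is that $|X|/\lambda_n\uparrow |X|/\alpha$ a.s.\ as $\lambda_n\downarrow\alpha$, so that $\hat\varphi(|X|/\lambda_n)\uparrow\hat\varphi(|X|/\alpha)$ by the continuity-from-below property \eqref{eq:HatPhiMon1} of Lemma~\ref{lem:HatPhiMon1}. Indeed each $|X|/\lambda_n\in L^0_+\subset\DC_0$ and the sequence increases to $|X|/\alpha\in L^0_+$, so \eqref{eq:HatPhiMon1} applies and gives
\begin{equation*}
  \hat\varphi(|X|/\alpha)=\lim_n\hat\varphi(|X|/\lambda_n)\leq 1.
\end{equation*}
This is precisely the step where continuity from below is essential: without it, the infimum defining the gauge need not be attained, and one could only assert $\hat\varphi(|X|/\alpha)\leq 1$ after an approximation that might strictly increase the value.

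\textbf{Second claim: lower semicontinuity under a.s.\ convergence.} Assuming the first claim, I would deduce \eqref{eq:NormLSC} as follows. Set $\ell:=\liminf_n\|X_n\|_{\hat\varphi}$; if $\ell=\infty$ there is nothing to prove, so assume $\ell<\infty$ and fix any $\alpha>\ell$. Along a subsequence (not relabelled) we have $\|X_n\|_{\hat\varphi}\leq\alpha$, hence by the first part $\hat\varphi(|X_n|/\alpha)\leq 1$ for all such $n$. Now I would invoke continuity from below once more in tandem with monotonicity: since $|X_n|\to|X|$ a.s., the truncations behave well, and the desired bound $\hat\varphi(|X|/\alpha)\leq 1$ follows by expressing $\hat\varphi(|X|/\alpha)$ as an increasing limit of $\hat\varphi\big((|X|/\alpha)\wedge M\big)$ over $M$ (continuity from below), together with a Fatou-type passage to the limit in $n$ on each bounded truncation. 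More precisely, for fixed $M$ one has $(|X_n|/\alpha)\wedge M\to(|X|/\alpha)\wedge M$ a.s.\ with a common bound $M\in L^\infty$, so the Fatou property of $\varphi_0=\hat\varphi|_{L^\infty}$ gives $\hat\varphi\big((|X|/\alpha)\wedge M\big)\leq\liminf_n\hat\varphi\big((|X_n|/\alpha)\wedge M\big)\leq\liminf_n\hat\varphi(|X_n|/\alpha)\leq 1$; letting $M\uparrow\infty$ and using \eqref{eq:HatPhiMon1} yields $\hat\varphi(|X|/\alpha)\leq 1$, whence $\|X\|_{\hat\varphi}\leq\alpha$ by the first part. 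Since $\alpha>\ell$ was arbitrary, $\|X\|_{\hat\varphi}\leq\ell$, which is \eqref{eq:NormLSC}.

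\textbf{Main obstacle.} The only genuinely delicate point is the interchange of the two limiting procedures in the second claim---the truncation level $M\uparrow\infty$ and the index $n\to\infty$. The truncation step uses the Fatou property of $\hat\varphi$ on $L^\infty$ (inherited from $\varphi_0$) on the bounded functions $(|X_n|/\alpha)\wedge M$, while the de-truncation step uses continuity from below of $\hat\varphi$ on $L^0_+$; keeping these two in the right order is what makes the argument go through. Everything else reduces to the already-established seminorm identities \eqref{eq:Seminorm1}--\eqref{eq:SeminormConvergence} and the monotonicity of $\hat\varphi$.
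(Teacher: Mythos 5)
Your proof is correct. For the first claim your argument coincides with the paper's: monotonicity makes the set $\{\lambda>0:\hat\varphi(|X|/\lambda)\leq 1\}$ upward closed, so $\hat\varphi\bigl(|X|/(\alpha+1/n)\bigr)\leq 1$ for every $n$ once $\|X\|_{\hat\varphi}\leq\alpha$, and continuity from below (\ref{eq:HatPhiMon1}) applied to $|X|/(\alpha+1/n)\uparrow |X|/\alpha$ closes the gap, exactly as in the paper. For (\ref{eq:NormLSC}), however, you take a genuinely different route. The paper works with the lower envelope $Y_n:=\inf_{k\geq n}|X_k|\uparrow |X|$ and argues by contraposition: for $\varepsilon\in(0,\|X\|_{\hat\varphi})$ one has $\hat\varphi\bigl(|X|/(\|X\|_{\hat\varphi}-\varepsilon)\bigr)>1$, so continuity from below forces $\hat\varphi\bigl(Y_n/(\|X\|_{\hat\varphi}-\varepsilon)\bigr)>1$ for large $n$, hence $\|Y_n\|_{\hat\varphi}>\|X\|_{\hat\varphi}-\varepsilon$, and $\liminf_n\|X_n\|_{\hat\varphi}\geq\sup_n\|Y_n\|_{\hat\varphi}$ by the lattice-seminorm property (\ref{eq:Seminorm1}), since $Y_n\leq |X_k|$ for $k\geq n$. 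You instead extract a subsequence along which $\|X_n\|_{\hat\varphi}\leq\alpha$ for a fixed $\alpha>\liminf_n\|X_n\|_{\hat\varphi}$, truncate at level $M$, apply the Fatou property of $\varphi_0=\hat\varphi|_{L^\infty}$ in the form (\ref{eq:FatouLinfty1}) to the uniformly bounded sequence $(|X_n|/\alpha)\wedge M$, and de-truncate with (\ref{eq:HatPhiMon1}); your ordering of the two limits ($n\to\infty$ at fixed $M$ first, then $M\uparrow\infty$) is the one delicate point and you handle it correctly. The paper's argument is marginally leaner, needing only the single input (\ref{eq:HatPhiMon1}) and no subsequence extraction, while yours additionally invokes the Fatou property of $\varphi_0$ on $L^\infty$; since the Fatou property already suffices for Lemma~\ref{lem:HatPhiMon1} and is certainly available under Assumption~\ref{as:Standing}, the two arguments have the same scope, and both rest on the same pillar, namely the continuity from below established in Lemma~\ref{lem:HatPhiMon1}.
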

\begin{proof}
  The sufficiency of the first claim is clear from (\ref{eq:Gauge}),
  while the monotonicity and continuity from below of $\hat\varphi$
  imply that for any $\alpha>0$,
  \begin{align*}
    \alpha>\|X\|_{\hat\varphi}\,\Rightarrow\,
    \hat\varphi\left(|X|/\alpha\right)
    =\lim_n\hat\varphi\left(\frac{|X|}{\alpha+1/n}\right) \leq
    \lim_n\hat\varphi\left(\frac{|X|}{\|X\|_{\hat\varphi}+1/n}\right)
    \leq 1.
  \end{align*}

  For (\ref{eq:NormLSC}), we may suppose $\|X\|_{\hat\varphi}>0$
  (otherwise trivial). Put $Y_n:=\inf_{k\geq n}|X_k|$ and note that
  $0\leq Y_n\uparrow |X|$ by $X_n\rightarrow X$. Then for any
  $\varepsilon\in (0,\|X\|_{\hat\varphi})$,
  \begin{align*}
    1<\hat\varphi\left(\frac{|X|}{\|X\|_{\hat\varphi}-\varepsilon}\right)
    =\lim_n\hat\varphi\left(\frac{Y_n}{\|X\|_{\hat\varphi}-\varepsilon}\right),
  \end{align*}
  which implies in view of the first claim that
  $\|Y_n\|_{\hat\varphi}>\|X\|_{\hat\varphi}-\varepsilon$ for large
  enough $n$, thus we deduce that $\liminf_n\|X_n\|_{\hat\varphi}\geq
  \sup_n\|\inf_{k\geq
    n}|X_k|\|_{\hat\varphi}=\sup_n\|Y_n\|_{\hat\varphi}\geq
  \|X\|_{\hat\varphi}-\varepsilon$. Since $\varepsilon>0$ is
  arbitrary, we have (\ref{eq:NormLSC}).
\end{proof}

The next one is crucial.
\begin{lemma}
  \label{lem:OrderContiSeminorm}
  The lattice seminorm $\|\cdot\|_{\hat\varphi}$ is order-continuous
  on $M^{\hat\varphi}_u$, i.e.,
  \begin{equation}
    \label{eq:OrderContiSeminorm1}
    \|X_n\|_{\hat\varphi}\rightarrow 0 \text{ whenever }    \exists Y\in M^{\hat\varphi}_u\text{ with }
    |X_n|\leq |Y|\,(\forall n)\text{ and } X_n\rightarrow 0\text{, a.s. } 
  \end{equation}

\end{lemma}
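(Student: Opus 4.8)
The plan is to reduce everything to the modular criterion (\ref{eq:SeminormConvergence}): it suffices to prove that $\hat\varphi(\alpha|X_n|)\to 0$ for each fixed $\alpha>0$, and then let $\alpha>0$ range freely. To estimate $\hat\varphi(\alpha|X_n|)$ I would split the mass of $X_n$ according to the \emph{dominating} variable $Y$, cutting at a level $N$ into a ``bounded part'' on $\{|Y|\le N\}$ and a ``tail part'' on $\{|Y|>N\}$, and bound each piece by convexity of $\hat\varphi$ together with $\hat\varphi(0)=0$. Concretely, writing $\alpha|X_n|=\frac12\bigl(2\alpha|X_n|\mathds1_{\{|Y|\le N\}}\bigr)+\frac12\bigl(2\alpha|X_n|\mathds1_{\{|Y|>N\}}\bigr)$ gives
\[
  \hat\varphi(\alpha|X_n|)\le \frac12\hat\varphi\bigl(2\alpha|X_n|\mathds1_{\{|Y|\le N\}}\bigr)+\frac12\hat\varphi\bigl(2\alpha|X_n|\mathds1_{\{|Y|>N\}}\bigr),
\]
exactly as in the derivation of (\ref{eq:HatPhiFinite1}) but with the cut made along $Y$ rather than along $X_n$ itself.

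For the tail term, the domination $|X_n|\le|Y|$ and the monotonicity of $\hat\varphi$ give $\hat\varphi\bigl(2\alpha|X_n|\mathds1_{\{|Y|>N\}}\bigr)\le \hat\varphi\bigl(2\alpha|Y|\mathds1_{\{|Y|>N\}}\bigr)$, a bound \emph{independent of $n$}. Since $Y\in M^{\hat\varphi}_u$, the definition (\ref{eq:Mphiu1}) forces $\hat\varphi\bigl(2\alpha|Y|\mathds1_{\{|Y|>N\}}\bigr)\to 0$ as $N\to\infty$; this is precisely the step where membership in $M^{\hat\varphi}_u$ (and not merely in $M^{\hat\varphi}$) is used. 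For the bounded term, fix $N$: the variables $2\alpha|X_n|\mathds1_{\{|Y|\le N\}}$ lie in $L^\infty$, are uniformly bounded by $2\alpha N$, and converge to $0$ a.s. because $X_n\to 0$ a.s.; hence the Lebesgue property (\ref{eq:LebLinfty1}) of $\varphi_0=\hat\varphi|_{L^\infty}$ yields $\hat\varphi\bigl(2\alpha|X_n|\mathds1_{\{|Y|\le N\}}\bigr)=\varphi_0\bigl(2\alpha|X_n|\mathds1_{\{|Y|\le N\}}\bigr)\to 0$ as $n\to\infty$.

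These two estimates combine through a standard $\varepsilon/2$ argument, and the only point requiring care is that the two regularity inputs must be invoked in the right order: given $\varepsilon>0$, I would \emph{first} choose $N$ so large that the (uniform-in-$n$) tail term is $<\varepsilon$, and \emph{then}, with $N$ frozen, let $n\to\infty$ so that the bounded term is $<\varepsilon$. This gives $\limsup_n\hat\varphi(\alpha|X_n|)\le\varepsilon$; since $\hat\varphi(\alpha|X_n|)\ge\hat\varphi(0)=0$ by monotonicity and $\varepsilon>0$ is arbitrary, we conclude $\hat\varphi(\alpha|X_n|)\to 0$. As $\alpha>0$ was arbitrary, (\ref{eq:SeminormConvergence}) then delivers $\|X_n\|_{\hat\varphi}\to 0$, which is the asserted order-continuity.
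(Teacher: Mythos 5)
Your proof is correct and follows essentially the same route as the paper's: the same cut along $\{|Y|\leq N\}$ versus $\{|Y|>N\}$, with the tail controlled uniformly in $n$ by $Y\in M^{\hat\varphi}_u$ and the bounded part handled by the Lebesgue property of $\varphi_0=\hat\varphi|_{L^\infty}$, finishing with the same two-step limit. The only cosmetic difference is that you perform the split at the modular level via convexity and invoke (\ref{eq:SeminormConvergence}) once at the end, whereas the paper splits $\|X_n\|_{\hat\varphi}$ via the triangle inequality and uses (\ref{eq:SeminormConvergence}) to translate each piece into a modular statement.
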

\begin{proof}
  Let $(X_n)_n\subset M^{\hat\varphi}_u$ be dominated by $Y\in
  M^{\hat\varphi}_u$ and converges a.s. to $0$. Then
  \begin{align*}
    \|X_n\|_{\hat\varphi}&\leq
    \|X_n\mathds1_{\{|Y|>N\}}\|_{\hat\varphi}+\|X_n\mathds1_{\{|Y|\leq
      N\}}\|_{\hat\varphi}\leq
    \|Y\mathds1_{\{|Y|>N\}}\|_{\hat\varphi}+\|X_n\mathds1_{\{|Y|\leq
      N\}}\|_{\hat\varphi}.
  \end{align*}
  We claim that (1)
  $\|Y\mathds1_{\{|Y|>N\}}\|_{\hat\varphi}\stackrel{N}\rightarrow 0$,
  and (2) for each fixed $N$, $\|X_n\mathds1_{\{|Y|\leq
    N\}}\|_{\hat\varphi}\stackrel{n}\rightarrow0$, then
  (\ref{eq:OrderContiSeminorm1}) follows by a diagonal argument.  In
  fact, (1) is equivalent in view of (\ref{eq:SeminormConvergence}) to
  saying that $\hat\varphi(\alpha|Y|\mathds1_{\{|Y|>N\}})\rightarrow
  0$ for all $\alpha>0$, which is nothing but the definition of $Y$
  being an element of $M^{\hat\varphi}_u$. As for (2), we note that
  the sequence $Z_n^N:=X_n\mathds1_{\{|Y|\leq N\}}$ satisfies
  $\sup_n\|Z_n^N\|_\infty\leq N<\infty$ (since $|X_n|\leq |Y|$ by
  assumption) and $Z_n^N\rightarrow 0$ a.s. ($n\rightarrow
  \infty$). Thus the Lebesgue property of
  $\varphi_0=\hat\varphi|_{L^\infty}$ shows that
  $\hat\varphi(\alpha|Z^N_n|)=\varphi_0(\alpha|Z^N_n|)\rightarrow 0$
  for all $\alpha>0$, hence $\|Z_n^N\|_{\hat\varphi}\rightarrow 0$ by
  (\ref{eq:SeminormConvergence}).
\end{proof}

We now characterize the space $M^{\hat\varphi}_u$ in terms of the
gauge seminorm $\|\cdot\|_{\hat\varphi}$.
\begin{lemma}
  \label{lem:ChMphiUNorm1}
  For any $X\in L^0$, the following are equivalent:
  \begin{enumerate}
  \item $X\in M^{\hat\varphi}_u$;
  \item $\lim_N\|X\mathds1_{\{|X|>N\}}\|_{\hat\varphi}=0$;
  \item $\lim_n\|X\mathds1_{A_n}\|_{\hat\varphi}=0$ whenever
      $\PB(A_n)\downarrow 0$.
  \end{enumerate}

\end{lemma}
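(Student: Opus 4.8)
The plan is to organize the proof around the seminorm criterion (\ref{eq:SeminormConvergence}), which already identifies $\|\cdot\|_{\hat\varphi}$-convergence with $\hat\varphi$-convergence. First I would dispose of the two easy implications. The equivalence (1) $\Leftrightarrow$ (2) is essentially a rephrasing: applying (\ref{eq:SeminormConvergence}) to the sequence $X_N:=X\mathds1_{\{|X|>N\}}$ shows that $\|X\mathds1_{\{|X|>N\}}\|_{\hat\varphi}\to 0$ holds if and only if $\hat\varphi(\alpha|X|\mathds1_{\{|X|>N\}})\to 0$ for every $\alpha>0$, and the latter is exactly the defining condition (\ref{eq:Mphiu1}) for membership in $M^{\hat\varphi}_u$. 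For (3) $\Rightarrow$ (2), I would simply feed the particular sequence $A_N:=\{|X|>N\}$ into (3): since $X\in L^0$ is finite a.s., these sets decrease with $\PB(A_N)\downarrow 0$, so (3) yields (2) at once.

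The real content is (2) $\Rightarrow$ (3). Given $A_n$ with $\PB(A_n)\downarrow 0$, I would split off the large values of $X$ and use the lattice seminorm inequality (\ref{eq:Seminorm1}): from $|X\mathds1_{A_n\cap\{|X|>N\}}|\leq |X|\mathds1_{\{|X|>N\}}$ and $|X\mathds1_{A_n\cap\{|X|\leq N\}}|\leq N\mathds1_{A_n}$ one obtains
\[
\|X\mathds1_{A_n}\|_{\hat\varphi}\leq \|X\mathds1_{\{|X|>N\}}\|_{\hat\varphi}+N\|\mathds1_{A_n}\|_{\hat\varphi}.
\]
Given $\varepsilon>0$, condition (2) lets me fix $N$ so that the first term is below $\varepsilon/2$ uniformly in $n$; it then remains to drive the second term to $0$, i.e.\ to prove the key claim that $\|\mathds1_{A_n}\|_{\hat\varphi}\to 0$ whenever $\PB(A_n)\to 0$.

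This key claim is where I expect the main obstacle, and where the Lebesgue property of $\varphi_0$ on $L^\infty$ must be invoked; the difficulty is that $\PB(A_n)\to 0$ gives only convergence in probability, not almost surely, so I cannot apply (\ref{eq:LebLinfty1}) directly to $(\mathds1_{A_n})_n$. I would circumvent this by a contradiction argument through the subsequence principle and Borel--Cantelli: if the claim failed, some subsequence would satisfy $\|\mathds1_{A_{n_k}}\|_{\hat\varphi}\geq\delta>0$, and since $\PB(A_{n_k})\to 0$ I could pass to a further subsequence with $\sum_j\PB(A_{n_{k_j}})<\infty$, forcing $\mathds1_{A_{n_{k_j}}}\to 0$ a.s. As these indicators are uniformly bounded by $1$ and $\hat\varphi|_{L^\infty}=\varphi_0$ with $\varphi_0(0)=0$ (Assumption~\ref{as:Standing}), the Lebesgue property (\ref{eq:LebLinfty1}) would give $\hat\varphi(\alpha\mathds1_{A_{n_{k_j}}})=\varphi_0(\alpha\mathds1_{A_{n_{k_j}}})\to 0$ for all $\alpha>0$, hence $\|\mathds1_{A_{n_{k_j}}}\|_{\hat\varphi}\to 0$ by (\ref{eq:SeminormConvergence}), contradicting $\geq\delta$. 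This establishes the claim; choosing $n$ large enough that $N\|\mathds1_{A_n}\|_{\hat\varphi}<\varepsilon/2$ then closes (2) $\Rightarrow$ (3) and completes the cycle.
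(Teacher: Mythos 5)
Your proof is correct, and it closes the cycle differently from the paper. The paper's own proof is essentially three lines of bookkeeping: (3) $\Rightarrow$ (2) is the same specialization $A_N=\{|X|>N\}$ you use; (2) $\Leftrightarrow$ (1) is, as in your first paragraph, the observation that (\ref{eq:SeminormConvergence}) applied to $X\mathds1_{\{|X|>N\}}$ turns the norm condition into the defining condition of $M^{\hat\varphi}_u$ (the paper notes this was already recorded inside the proof of Lemma~\ref{lem:OrderContiSeminorm}); but for the remaining implication the paper proves (1) $\Rightarrow$ (3) by directly invoking the order-continuity of the seminorm (Lemma~\ref{lem:OrderContiSeminorm}) with $Y_n:=X\mathds1_{A_n}$, dominated by $|X|\in M^{\hat\varphi}_u$ and tending to $0$ a.s. You instead prove (2) $\Rightarrow$ (3) from scratch, and in doing so you essentially re-run the tail/bounded splitting that is the engine of Lemma~\ref{lem:OrderContiSeminorm}'s proof, with the bounded part $N\|\mathds1_{A_n}\|_{\hat\varphi}$ controlled via the Lebesgue property of $\varphi_0$ on $L^\infty$; so the underlying mechanism is the same, but your organization avoids citing the domination lemma. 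What your route buys is worth noting: $\PB(A_n)\downarrow 0$ only gives $\mathds1_{A_n}\to 0$ \emph{in probability} (the $A_n$ need not be nested), so the paper's one-line assertion that $X\mathds1_{A_n}\to 0$ a.s. is strictly a small gloss, repaired precisely by your Borel--Cantelli subsequence reduction (legitimate here because the conclusion $\|X\mathds1_{A_n}\|_{\hat\varphi}\to 0$ is stable under the ``every subsequence has a further subsequence'' principle). What the paper's route buys is brevity and reuse: once Lemma~\ref{lem:OrderContiSeminorm} is available, (1) $\Rightarrow$ (3) is immediate, whereas your argument duplicates part of that lemma's content inside the present proof.
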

\begin{proof}
  (3) $\Rightarrow$ (2) is clear, and (2) $\Rightarrow$ (1) was
  already proved in the proof of Lemma~\ref{lem:OrderContiSeminorm}.
  If $X\in M^{\hat\varphi}_u$, then $Y_n:=X\mathds1_{A_n}\in
  M^{\hat\varphi}_u$, $|Y_n|\leq |X|$ and $Y_n\rightarrow 0$
  a.s. whenever $\PB(A_n)\rightarrow0$. Thus (1) $\Rightarrow$ (3)
  follows from Lemma~\ref{lem:OrderContiSeminorm}.
\end{proof}

Finally, we have the following inequality:
\begin{lemma}
  \label{lem:SeminormEmbedding}
  For any $X\in L^0$ and $Z\in \dom\varphi^*_0$,
  \begin{equation}
    \label{eq:SeminormEmbedding}
    \EB[|X|Z]\leq (1+\varphi^*_0(Z))\|X\|_{\hat\varphi}.
  \end{equation}

\end{lemma}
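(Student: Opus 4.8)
The plan is to read off (\ref{eq:SeminormEmbedding}) from the Young-type inequality (\ref{eq:YoungHat1}) by choosing the scaling parameter $\alpha$ optimally, namely $\alpha=1/\|X\|_{\hat\varphi}$. Before doing so I would record two elementary facts. First, since $\varphi_0(0)=0$ one has $\varphi^*_0(Z)\geq\EB[0\cdot Z]-\varphi_0(0)=0$, so the factor $1+\varphi^*_0(Z)$ is at least $1$; second, $\dom\varphi^*_0\subset L^1_+$ by (\ref{eq:MonotoneZ}), hence $|X|Z\geq0$ and the left-hand side of (\ref{eq:SeminormEmbedding}) is a well-defined element of $[0,\infty]$.

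Next I would dispose of the degenerate values of the gauge. When $\|X\|_{\hat\varphi}=+\infty$ (equivalently $X\notin L^{\hat\varphi}$, by (\ref{eq:SeminormFinite})), the right-hand side is $+\infty$ and there is nothing to prove. In the main case $0<\|X\|_{\hat\varphi}<\infty$, set $\lambda:=\|X\|_{\hat\varphi}$; the first assertion of Lemma~\ref{lem:LatticeSemiNorm1} gives $\hat\varphi(|X|/\lambda)\leq1$, and feeding $\alpha=1/\lambda$ into (\ref{eq:YoungHat1}) yields $\frac{1}{\lambda}\EB[|X|Z]\leq\hat\varphi(|X|/\lambda)+\varphi^*_0(Z)\leq1+\varphi^*_0(Z)$. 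Multiplying through by $\lambda=\|X\|_{\hat\varphi}$ then gives exactly (\ref{eq:SeminormEmbedding}).

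The one step needing a little extra care is the boundary case $\|X\|_{\hat\varphi}=0$, where the gauge cannot be inverted and the claim reduces to $\EB[|X|Z]=0$. Here I would use (\ref{eq:SeminormZero}) to get $\hat\varphi(\alpha|X|)=0$ for every $\alpha>0$, so that (\ref{eq:YoungHat1}) becomes $\alpha\EB[|X|Z]\leq\varphi^*_0(Z)<\infty$ for all $\alpha>0$; letting $\alpha\uparrow\infty$ and invoking $\EB[|X|Z]\geq0$ forces $\EB[|X|Z]=0$, as required. Thus I do not expect a genuine obstacle: the lemma is essentially Young's inequality evaluated at the optimal scaling $\alpha=1/\|X\|_{\hat\varphi}$, and the only subtlety is the degenerate case $\|X\|_{\hat\varphi}=0$, which is handled by the coercivity in $\alpha$ together with the nonnegativity $|X|Z\geq0$.
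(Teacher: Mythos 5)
Your proof is correct and follows essentially the same route as the paper: the Young-type inequality (\ref{eq:YoungHat1}) combined with the gauge normalization. The only cosmetic difference is that the paper avoids your case split by taking $\alpha>\|X\|_{\hat\varphi}$ strictly and letting $\varepsilon\downarrow 0$, which handles $\|X\|_{\hat\varphi}=0$ uniformly, whereas you use Lemma~\ref{lem:LatticeSemiNorm1} to evaluate at the gauge itself and dispose of the zero-gauge case separately via (\ref{eq:SeminormZero}).
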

\begin{proof}
  We may assume $\|X\|_{\hat\varphi}<\infty$ (otherwise trivial).
  Then (\ref{eq:YoungHat1}) shows $1\geq
  \hat\varphi\left(|X|/\alpha\right)\geq
  \EB[|X|Z/\alpha]-\varphi^*_0(Z)$ for any
  $\alpha>\|X\|_{\hat\varphi}$ and $Z\in \dom\varphi^*_0$, thus
  rearranging the terms,
   \begin{align*}
     \EB[|X|Z]\leq
     (1+\varphi^*_0(Z))(\|X\|_{\hat\varphi}+\varepsilon),\,\forall
     \varepsilon>0,
  \end{align*}
  and we have (\ref{eq:SeminormEmbedding}) by letting
  $\varepsilon\downarrow 0$.
\end{proof}

\subsection{Quotient via a Change of Measure}
\label{sec:ChMeas}

We already know that $(M^{\hat\varphi}_u,\|\cdot\|_{\hat\varphi})$ is
a \emph{semi}-normed Riesz space with the \emph{order-continuous}
lattice seminorm, and $\hat\varphi$ is a \emph{finite} monotone convex
function on it. But $\|\cdot\|_{\hat\varphi}$ is not generally a
\emph{norm}, i.e., $\|X\|_{\hat\varphi}=0$ does not imply $X=0$ as an
element of $M^{\hat\varphi}_u$ (or in $L^0$), thus we cannot directly
conclude that $M^{\hat\varphi}_u$ is an order-continuous Banach
lattice. A standard way of tackling this kind of difficulty is to take
the quotient by the relation induced by $\|X\|_{\hat\varphi}=0$. We
shall do this through a suitable change of probability.

\begin{lemma}
  \label{lem:SensitivityMaximal1}
  There exists a $\Zh\in\dom\varphi^*_0$ such that for any $A\in\FC$,
  \begin{equation}
    \label{eq:HalmosSavage1}
    \EB[\Zh\mathds1_A]=0\,\Rightarrow \, \EB[Z\mathds1_A]=0,\,\forall Z\in\dom\varphi^*_0.
  \end{equation}
  Then putting $d\QB/d\PB=\cH\Zh$ (with $\cH=\EB[\Zh]^{-1}$), $\QB$ is
  a probability measure such that
  \begin{equation}
    \label{eq:ProbQ}
    \varphi^*_0(\cH d\QB/d\PB)<\infty,\text{ and }    \QB(|X|>0)=0\,
    \Leftrightarrow\, \hat\varphi(\alpha|X|)=0,\,\forall \alpha>0.
  \end{equation}
\end{lemma}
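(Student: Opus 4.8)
The plan is to produce $\Zh$ by a Halmos--Savage type exhaustion of the supports of the densities in $\dom\varphi^*_0$, and then to verify the two asserted properties of $\QB$ separately. \emph{Construction of $\Zh$.} First I would consider the family of supports $\{\{Z>0\}:Z\in\dom\varphi^*_0\}$, bounded above by $\mathds1$, and let $\mathds1_{G}:=\sup_{Z\in\dom\varphi^*_0}\mathds1_{\{Z>0\}}$ be its lattice supremum in $L^0$; by the countable-sup property of $L^0$ there is a countable subfamily $(Z_n)_n\subset\dom\varphi^*_0$ with $\bigcup_n\{Z_n>0\}=G$ a.s. Choosing strictly positive weights $(c_n)_n$ with $\sum_n c_n=1$ and $\sum_n c_n\bigl(\EB[Z_n]+\varphi^*_0(Z_n)\bigr)<\infty$ (possible for a countable family, e.g.\ $c_n$ proportional to $2^{-n}/(1+\EB[Z_n]+\varphi^*_0(Z_n))$), I would set $\Zh:=\sum_n c_nZ_n$. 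Then $\EB[\Zh]<\infty$, so $\Zh\in L^1_+$ and $\cH=\EB[\Zh]^{-1}\in(0,\infty)$, while $\{\Zh>0\}=\bigcup_n\{Z_n>0\}=G$ a.s., so $\cH\Zh$ is a genuine probability density.

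\emph{Finiteness and maximality.} The step I expect to be the main obstacle is that the \emph{countable} convex combination $\Zh$ still lies in $\dom\varphi^*_0$: unlike finite convex combinations, this is not immediate. I would obtain it from the identity $\varphi_0(X)=\sum_n c_n\varphi_0(X)$ (using $\sum_n c_n=1$) together with an interchange of supremum and sum,
\begin{align*}
\varphi^*_0(\Zh)=\sup_{X\in L^\infty}\sum_n c_n\bigl(\EB[XZ_n]-\varphi_0(X)\bigr)\leq\sum_n c_n\varphi^*_0(Z_n)<\infty,
\end{align*}
which is exactly the finiteness of the conjugate at the (rescaled) density defining $\QB$. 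For (\ref{eq:HalmosSavage1}): since $\mathds1_{G}$ is the supremum over the \emph{whole} family, $\{Z>0\}\subset G=\{\Zh>0\}$ a.s.\ for every $Z\in\dom\varphi^*_0$, so $\EB[\Zh\mathds1_A]=0$ forces $\PB(A\cap\{\Zh>0\})=0$, whence $\PB(A\cap\{Z>0\})=0$ and $\EB[Z\mathds1_A]=0$.

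\emph{The equivalence.} Since $\Zh\geq0$ and $\QB$ has density $\cH\Zh$, the condition $\QB(|X|>0)=0$ is equivalent to $\EB[|X|\Zh]=0$. For the implication $\Leftarrow$, Young's inequality (\ref{eq:YoungHat1}) with $Z=\Zh$ gives $\alpha\,\EB[|X|\Zh]\leq\hat\varphi(\alpha|X|)+\varphi^*_0(\Zh)=\varphi^*_0(\Zh)$ for every $\alpha>0$ whenever $\hat\varphi(\alpha|X|)=0$ for all $\alpha$; since $\varphi^*_0(\Zh)<\infty$, letting $\alpha\to\infty$ forces $\EB[|X|\Zh]=0$, i.e.\ $\QB(|X|>0)=0$. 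For $\Rightarrow$, if $\QB(|X|>0)=0$ then (\ref{eq:HalmosSavage1}) applied to $A=\{|X|>0\}$ yields $|X|Z=0$ a.s.\ for every $Z\in\dom\varphi^*_0$, so $\EB[\alpha|X|Z]=0$ and hence $\hat\varphi(\alpha|X|)=\sup_{Z\in\dom\varphi^*_0}(-\varphi^*_0(Z))=\hat\varphi(0)=\varphi_0(0)=0$ for all $\alpha>0$, using Assumption~\ref{as:Standing}. The essential work is thus confined to the weight-balancing and the convexity bound of the first two paragraphs; the equivalence itself follows directly from Young's inequality and the normalization $\varphi_0(0)=0$.
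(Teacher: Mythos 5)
Your proof is correct, and the construction of $\Zh$ takes a genuinely different route from the paper's. The paper works inside the sublevel set $\Lambda=\{Z\in\dom\varphi^*_0:\varphi^*_0(Z)\leq 1\}$: it shows $\Lambda$ is countably convex (by the same $\sup$/$\sum$ interchange you use), picks $Z_n\in\Lambda$ with $\PB(Z_n>0)\uparrow\sup_{Z\in\Lambda}\PB(Z>0)$, sets $\Zh=\sum_n2^{-n}Z_n$, and then proves (\ref{eq:HalmosSavage1}) by contradiction: if some $Z\in\dom\varphi^*_0$ charged a set where $\Zh$ vanishes, then $\bar Z=(\Zh+\varepsilon Z)/2$ would beat the maximum --- a step that requires pulling the arbitrary $Z$ into $\Lambda$ via ``$\varepsilon Z\in\Lambda$ for small $\varepsilon$, since $\varphi^*_0(0)=0$''. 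You instead take the lattice (essential) supremum of the supports over the \emph{whole} of $\dom\varphi^*_0$, extract a countable subfamily by the countable-sup property of $L^0$, and force the \emph{countable} convex combination to stay in $\dom\varphi^*_0$ by balancing the weights $c_n$ against $\EB[Z_n]+\varphi^*_0(Z_n)$ (the weight-balancing also makes your series absolutely summable termwise, which legitimizes the interchange); then (\ref{eq:HalmosSavage1}) is immediate from $\{Z>0\}\subset\{\Zh>0\}$ a.s.\ for \emph{every} $Z\in\dom\varphi^*_0$, with no contradiction or rescaling. Your route buys robustness: the paper's parenthetical $\varphi^*_0(0)=0$ is in fact not guaranteed under Assumption~\ref{as:Standing}, since $\varphi^*_0(0)=-\inf_{X}\varphi_0(X)$, which is $+\infty$ for instance for any cash-invariant $\varphi_0$ (e.g.\ the entropic risk function, where $\varepsilon Z\notin\dom\varphi^*_0$ for all $\varepsilon\in(0,1)$ because elements of $\dom\varphi^*_0$ must be probability densities); your weighted-combination exhaustion over the full domain is precisely the repair that sidesteps this. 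What the paper's version buys in exchange is the normalization $\varphi^*_0(\Zh)\leq1$, which is used later (Lemma~\ref{lem:LatticeNormQ}) to get the explicit constant $c_\QB=2\EB[\Zh]$; your $\Zh$ has merely $\varphi^*_0(\Zh)<\infty$, which suffices for the present lemma and changes only that constant to $(1+\varphi^*_0(\Zh))\EB[\Zh]$ downstream. Your verification of (\ref{eq:ProbQ}) --- Young's inequality with $\alpha\to\infty$ for one direction, and (\ref{eq:HalmosSavage1}) applied to $A=\{|X|>0\}$ together with $\sup_{Z}(-\varphi^*_0(Z))=\varphi_0(0)=0$ for the other --- coincides with the paper's; you also share with the paper the tacit assumption $\dom\varphi^*_0\neq\{0\}$, without which $\EB[\Zh]=0$ and $\cH$ is undefined.
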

\begin{remark}
  \label{rem:SensitiveFatouEnough}
  As we shall see in the proof, this lemma does not need the Lebesgue
  property of $\varphi^0$; the Fatou property is enough.
\end{remark}
\begin{proof}
  We first construct a $\Zh\in \dom\varphi_0^*\subset L^1$ such that
  \begin{equation}
    \label{eq:SensitivityMaximal}
    \varphi^*_0(\Zh)\leq 1\text{ and }  
    \PB(\Zh>0)=\max\{\PB(Z>0):\, Z\in \dom\varphi_0^*,\,\varphi^*_0(Z)\leq 1\}.
  \end{equation}
  The set $\Lambda:=\{Z\in \dom\varphi_0^*: \varphi_0^*(Z)\leq 1\}$ is
  convex, norm closed in $L^1$ by the lower semicontinuity of
  $\varphi^*_0$, and is norm bounded since $\EB[|Z|]=\EB[Z]\leq
  \varphi_0(1)+1$ for all $Z\in \Lambda$. Thus for any sequences
  $(Z_n)_n\subset\Lambda$ and $(\alpha_n)\subset \RB_+$ with
  $\sum_n\alpha_n=1$, the series $Z:=\sum_n\alpha_nZ_n$ is absolutely
  convergent in $L^1$, and we have in fact $Z\in \Lambda$. Indeed,
 \begin{align*}
   \varphi_0^*(Z)&=\sup_{X\in
     L^\infty}\left(\EB[XZ]-\varphi_0(X)\right)=\sup_{X\in
     L^\infty}\left(\sum_n\alpha_n\EB[XZ_n]-\varphi_0(X)\right)\\
   &\leq\sum_n\alpha_n\sup_{X'\in
     L^\infty}\left(\EB[X'Z_n]-\varphi_0(X')\right)=\sum_n\varphi^*_0(Z_n)\leq
   1.
  \end{align*}
  In other words, $\Lambda$ is countably convex. Then choosing a
  sequence $(Z_n)_n\subset \Lambda$ so that $\PB(Z_n>0)\uparrow
  \sup_{Z\in \Lambda}\PB(Z>0)$, $\Zh:=\sum_n2^{-n}Z_n\in \Lambda$ and
  we have (\ref{eq:SensitivityMaximal}).

  We check that this $\Zh$ satisfies (\ref{eq:HalmosSavage1}). Indeed,
  if there exists a $Z\in \dom\varphi^*_0$ and $A\in\FC$ such that
  $\EB[\Zh\mathds1_A]=0$ and $\EB[Z\mathds1_A]>0$, we see that
  $\PB(\Zh=0,\,Z>0)>0$, $\varepsilon Z\in \Lambda$ for some small
  $\varepsilon>0$ since $\varphi^*_0(0)=0$ and $\bar
  Z:=(\Zh+\varepsilon Z)/2\in \Lambda$ satisfies
  \begin{align*}
    \PB(\bar Z>0)=\PB(\Zh>0)+\PB(Z>0,\,\Zh=0)>\PB(\Zh>0).
  \end{align*}
  This contradicts to (\ref{eq:SensitivityMaximal}).

  Finally, putting $d\QB/d\PB=\Zh/\EB[\Zh]$, the first condition of
  (\ref{eq:ProbQ}) is clear. For the second, if
  $\QB(|X|>0)=\EB[\Zh\mathds1_{\{|X|>0\}}]=0$, then $\EB[|X|Z]=0$ for
  all $Z\in\dom\varphi^*_0$, hence $\hat\varphi(\alpha\mathbb|X|)
  =\sup_{Z\in\dom\varphi^*_0}(\alpha\EB[Z|X|]-\varphi^*_0(Z))=0$ for
  all $\alpha>0$. On the other hand, if $\hat\varphi(\alpha|X|)=0$ for
  all $\alpha>0$, then $\alpha \EB[\Zh|X|]\leq
  \hat\varphi(\alpha|X|)+\varphi^*_0(\Zh)\leq 1$ for all $\alpha>0$,
  thus $\EB[|X|\Zh]=0$, and consequently $\QB(|X|>0)=0$.
\end{proof}

By (\ref{eq:ProbQ}), we see that $\|X\|_{\hat\varphi}=0$ if and only
if $X=0$, $\QB$-a.s. Let
\begin{equation}
  \label{eq:NPQ}
  \NC_{\PB}(\QB):=\{X\in L^0:\, X=0,\,\QB\text{-a.s.}\}=\{X\in L^0:\, \hat\varphi(\alpha|X|)=0,\,\forall \alpha>0\}.
\end{equation}
The quotient space $L^0/\|\cdot\|_{\hat\varphi}=L^0/\NC_\PB(\QB)$ is
(lattice isomorphic to) the space $L^0(\QB)$ of equivalence classes
\emph{modulo $\QB$-a.s. equality} of measurable functions ordered by
the $\QB$-a.s. inequality (remember that $L^0=L^0(\PB)$ also is the
space of classes but \emph{modulo $\PB$-a.s. equality}). All we need
is the following intuitively obvious lemma:
\begin{lemma}
  \label{lem:Sensitive1}
  There exists an onto linear mapping $\pi:L^0(\PB)\rightarrow
  L^0(\QB)$ such that
  \begin{align}
    \label{eq:LatticeHomo1}
    & X\wedge Y=0\text{ in } L^0(\PB)\,\Rightarrow\, \pi(X)\wedge
    \pi(Y)\text{ in }L^0(\QB),\\
    \label{eq:LatticeHomoOrderConti}
    &X_\alpha\downarrow 0 \text{ in }L^0(\PB)\,\Rightarrow\, \pi(X_\alpha)\downarrow 0\text{ in }L^0(\QB);\\
    \label{eq:PiConti1}
    &\begin{cases} %
      \xi_n,\xi,\,\eta\in L^0(\QB),\, |\xi_n|\leq |\eta|
      \text{ in }L^0(\QB) \,(\forall n),\, \xi_n\rightarrow \xi,\,\QB\text{-a.s. }\\
      \Rightarrow\,\exists X_n,X,Y\in L^0\text{ such that
      }\xi_n=\pi(X_n),\, \xi=\pi(X),\,\eta=\pi(Y),\\
      \qquad|X_n|\leq |Y|\text{ in }L^0\text{ and }X_n\rightarrow
      X,\,\PB\text{-a.s.}
    \end{cases}
  \end{align}
\end{lemma}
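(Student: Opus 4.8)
The plan is to realize $\pi$ as the canonical quotient map attached to the change of measure. Since $d\QB/d\PB=\cH\Zh$ with $\Zh\in\dom\varphi_0^*\subset L^1_+$, we have $\QB\ll\PB$ (though possibly $\QB\not\sim\PB$), and writing $S:=\{\Zh>0\}$ for the support of $\QB$, a set $N$ is $\QB$-null if and only if $\PB(N\cap S)=0$. Consequently $\NC_\PB(\QB)=\{X\in L^0:\,X=0 \text{ on } S,\,\PB\text{-a.s.}\}$ and $L^0(\QB)$ is lattice isomorphic to the quotient $L^0(\PB)/\NC_\PB(\QB)$. I would therefore take $\pi$ to send a $\PB$-class $X$ to the $\QB$-class of any of its measurable representatives; this is well defined because $X=X'$ $\PB$-a.s.\ forces $X=X'$ $\QB$-a.s. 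Linearity is clear, and surjectivity is immediate, since any measurable representative of a given $\QB$-class is itself a $\PB$-class mapped by $\pi$ onto it.

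For (\ref{eq:LatticeHomo1}) I would observe that the lattice operations are computed pointwise in both quotients, so $\pi$ is a Riesz homomorphism; in particular $X\wedge Y=0$ $\PB$-a.s.\ implies $X\wedge Y=0$ $\QB$-a.s.\ (as $\QB\ll\PB$), i.e.\ $\pi(X)\wedge\pi(Y)=0$. For the order continuity (\ref{eq:LatticeHomoOrderConti}), the key is the countable-sup property of $L^0(\PB)$ already invoked in Remark~\ref{rem:Lebesgue-OrderConti}: from $X_\alpha\downarrow 0$ I extract a decreasing sequence $X_{\alpha_n}\downarrow 0$, which is exactly monotone $\PB$-a.s.\ convergence $X_{\alpha_n}\to 0$, hence $\QB$-a.s.\ convergence, so $\pi(X_{\alpha_n})\downarrow 0$ in $L^0(\QB)$. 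Since $\pi$ is order preserving, $(\pi(X_\alpha))_\alpha$ is decreasing, and its infimum $\eta$ (which exists by order completeness of $L^0(\QB)$) satisfies $0\le\eta\le\inf_n\pi(X_{\alpha_n})=0$, whence $\eta=0$.

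The hard part will be the lifting property (\ref{eq:PiConti1}), precisely because $\QB$ may be strictly smaller than $\PB$, so $\QB$-a.s.\ information must be upgraded to $\PB$-a.s.\ information by exploiting the freedom of representatives off the support $S$. Here I would fix measurable representatives $f_n,f,g$ of $\xi_n,\xi,\eta$. The hypotheses $|\xi_n|\le|\eta|$ $(\forall n)$ and $\xi_n\to\xi$ hold $\QB$-a.s., i.e.\ on $S$ outside a countable union of $\PB$-null sets; redefining the functions to $0$ on that ($\PB$-null, hence $\QB$-null) set, which alters no class, makes both hold at every point of $S$. I then set $X_n:=f_n\mathds1_S$, $X:=f\mathds1_S$ and $Y:=g$. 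Off $S$ one has $X_n=X=0$, so $|X_n|=0\le|Y|$ and $X_n\to X$ hold trivially there, while on $S$ they hold by construction; and since $\QB$ is carried by $S$ we get $\pi(X_n)=\xi_n$, $\pi(X)=\xi$, $\pi(Y)=\eta$. Thus $|X_n|\le|Y|$ in $L^0$ and $X_n\to X$ $\PB$-a.s., which is exactly (\ref{eq:PiConti1}).
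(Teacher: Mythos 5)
Your proposal is correct and follows essentially the same route as the paper: you construct the identical canonical map sending a $\PB$-class to the $\QB$-class of any of its representatives, and you prove the lifting property (\ref{eq:PiConti1}) by the same good-representative trick (modifying on a $\QB$-null set so that the domination and convergence hold pointwise on the support of $\QB$, hence $\PB$-a.s.\ after zeroing off the support). The only minor divergence is in (\ref{eq:LatticeHomoOrderConti}), where you verify order continuity directly, extracting a decreasing sequence via the countable-sup property and comparing infima, whereas the paper invokes the criterion that an onto lattice homomorphism is normal iff its kernel is a band and then checks that $\NC_\PB(\QB)$ is order closed; both arguments rest on the same ingredients and your direct version is equally valid.
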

In general, a linear map from a Riesz space $E$ to another Riesz space
$F$ satisfying (\ref{eq:LatticeHomo1}) is called a \emph{lattice
  homomorphism}. (\ref{eq:LatticeHomoOrderConti}) says that $\pi$ is
\emph{order-continuous}, and such a lattice homomorphism is called a
\emph{normal homomorphism}. See \citep{aliprantis_Burkinshaw03} for
more information.

\begin{proof}[Proof of Lemma~\ref{lem:Sensitive1}]
  For each $X\in L^0$, let $\pi(X)$ be the $\QB$-equivalence class
  generated by a representative of $X$. This definition makes sense
  and does not depend on the choice of representative.  Indeed, if $f$
  and $g$ are two representatives of $X\in L^0$, then $f=g$
  $\PB$-a.s. by definition, hence $f=g$ $\QB$-a.s. since
  $\QB\ll\PB$. Thus the $\QB$-equivalence classes generated by $f$ and
  that by $g$ coincide. It is clear that $\pi:L^0\rightarrow L^0(\QB)$
  is linear and onto. To see (\ref{eq:LatticeHomo1}), suppose $X, Y\in
  L^0$ and $X\wedge Y=0$ in $L^0$. Then by definition, for any
  representatives $f\in X$ and $g\in Y$, we have $f\geq 0$ and $g\geq
  0$ $\PB$-a.s., hence $\QB$-a.s., and consequently $\pi(X)\geq 0$ and
  $\pi(Y)\geq 0$. Next, if $\xi\in L^0(\QB)$ and if $\xi\leq \pi(X)$,
  $\xi\leq \pi(Y)$ in $L^0(\QB)$, then taking a representative $h\in
  \xi$ in $L^0(\QB)$ with $f,g$ being same as above, we have $h\leq f$
  and $h\leq g$ $\QB$-a.s. Then putting $A=\{h\leq f,\,h\leq g\}$, we
  still have $h\mathds1_A\in \xi$ (since $\QB(A)=1$), and
  $h\mathds1_A\leq f$ and $h\mathds1_A\leq g$ $\PB$-a.s. (since $f\geq
  0$, $g\geq 0$ $\PB$-a.s.). Thus $h\mathds1_A\leq 0$ $\PB$-a.s.,
  hence $\QB$-a.s. Consequently, $\xi\leq 0$ in $L^0(\QB)$ which shows
  that $\pi(X)\wedge \pi(Y)=0$ in $L^0(\QB)$.

  The first line of (\ref{eq:PiConti1}) means that for some (hence
  all) representatives $f_n\in\xi_n$, $f\in \xi$ and $g\in \eta$,
  $|f_n|\leq |g|$ $\QB$-a.s. for all $n$, and $f_n\rightarrow f$
  $\QB$-a.s. Then putting $A=\{|f_n|\leq |g|\, (\forall n),\,
  f_n\rightarrow f\}\in \FC$, we see that $f_n\mathds1_A\in \xi_n$,
  $f\mathds1_A\in \xi$ and $g\mathds1_A\in \eta$ since $\QB(A)=1$,
  while $|f_n\mathds1_A|\leq |g\mathds1_A|$, $f_n\mathds1_A\rightarrow
  f\mathds1_A$ (pointwise). Hence if $X_n$ (resp. $X$, $Y$) denotes
  the $\PB$-class generated by $f_n\mathds1_A$ (resp. $f\mathds1_A$,
  $g\mathds1_A$), we have that $\xi_n=\pi(X_n)$, $\xi=\pi(X)$ and
  $\eta=\pi(Y)$ on the one hand, and on the other hand, $|X_n|\leq
  |Y|$ in $L^0$ and $X_n\rightarrow X$ $\PB$-a.s.

  Finally, for an onto lattice homomorphism $\pi$ to satisfy
  (\ref{eq:LatticeHomoOrderConti}), it is necessary and sufficient
  that the kernel of $\pi$ is a band (order-closed solid subspace) in
  $L^0$. In our case, the kernel of $\pi$ is $\NC_\PB(\QB)$ given by
  (\ref{eq:NPQ}), which is clearly a solid subspace of $L^0$. To prove
  the order-closedness, it suffices to check that for any increasing
  net $(X_\alpha)_\alpha\subset \NC_\PB(\QB)$ with $0\leq
  X_\alpha\uparrow X$ in $L^0$, we have $X\in\NC_\PB(\QB)$. But since
  $L^0$ has the countable sup property, there exists an increasing
  \emph{sequence} of indices $(\alpha_n)_n$ such that
  $X_{\alpha_n}\uparrow X$. Then the monotone convergence theorem
  shows that $\EB[X\Zh]=\lim_n\EB[X_{\alpha_n}\Zh]=0$, which implies
  $X=0$, $\QB$-a.s.
\end{proof}

\begin{remark}
  \label{rem:LatticeHomo}
  Taking $\eta=\sup_n|\xi_n|\in L^0(\QB)$ (if $\xi_n\rightarrow \xi$)
  (resp. $\xi_n\equiv \xi$, $\forall n$) in (\ref{eq:PiConti1}), we
  have also
    \begin{align}\label{eq:PiConti2}
      &\begin{cases}%
        & \xi_n,\xi\in L^0(\QB),\,\xi_n\rightarrow
        \xi,\QB\text{-a.s. }\\
        &\Rightarrow\,\exists X_n,X\in L^0,\,
        \xi_n=\pi(X_n),\,\xi=\pi(X),\,X_n\rightarrow
        X,\,\PB\text{-a.s.}
      \end{cases}
      \\\label{eq:PiSolid} & |\xi|\leq |\eta|\text{ in
      }L^0(\QB)\,\Rightarrow\,\exists X,Y\in L^0,\,
      \xi=\pi(X),\,\eta=\pi(Y),\,|X|\leq |Y|\text{ in }L^0.
    \end{align}
\end{remark}

Since $\pi: L^0\rightarrow L^0(\QB)$ is an onto lattice homomorphism,
we have $|\pi(X)|=\pi(|X|)$, and for any solid subspace $\Xs\subset
L^0$, the image $\Xs(\QB):=\pi(\Xs)$ is a solid subspace of $L^0(\QB)$
(see \citep[][Theorem~1.33]{aliprantis_Burkinshaw03}). If in addition
$\NC_\PB(\QB)\subset \Xs$, we see that $\pi(X)\in \Xs(\QB)$ if
\emph{and only if} $X\in \Xs$ (the if part is always true by
definition). Indeed, $\pi(X)\in \Xs(\QB)$ means $\pi(X)=\pi(X')$ with
$X'\in \Xs$, and then $\pi(X-X')=0$ in $L^0(\QB)$ $\Leftrightarrow$
$X-X'\in \NC_\PB(\QB)$, hence $X=X'+(X-X')\in \Xs+\NC_\PB(\QB)=\Xs$.
Noting that $\NC_\PB(\QB)\subset M^{\hat\varphi}_u$ by definition
(\ref{eq:NPQ}), the following three are all solid subspaces of
$L^0(\QB)$ of this type:
\begin{align*}
  M^{\hat\varphi}_u(\QB):=\pi(M^{\hat\varphi}_u),\quad
  L^{\hat\varphi}(\QB):=\pi(L^{\hat\varphi}),\quad
  M^{\hat\varphi}(\QB):=\pi(M^{\hat\varphi})
\end{align*}

 By (\ref{eq:ProbQ}) and $\|X\|_{\hat\varphi}=0$
  $\Leftrightarrow$ $\hat\varphi(\alpha|X|)=0$, $\forall \alpha>0$,
  the following is well-defined:
\begin{align}\label{eq:LatticeNormQ}
  \|\xi\|_{\hat\varphi,\QB}:=\|X\|_{\hat\varphi}\text{ if } \xi=
  \pi(X)\in L^0(\QB).
\end{align}
Note that $\|\xi\|_{\hat\varphi,\QB}<\infty$ iff $\xi\in
L^{\hat\varphi}(\QB)$ and $\|\xi\|_{\hat\varphi,\QB}=0$ if \emph{and
  only if} $\xi=0$ in $L^0(\QB)$ by construction. Thus
$\|\cdot\|_{\hat\varphi,\QB}$ is a lattice \emph{norm} on
$L^{\hat\varphi}(\QB)$ (hence on $M^{\hat\varphi}_u(\QB)$ and
$M^{\hat\varphi}(\QB)$ as well). The goal of this subsection is the
following:

\begin{theorem}
  \label{thm:MphiUOrderContiBanach}
  $(M^{\hat\varphi}_u(\QB),\|\cdot\|_{\hat\varphi,\QB})$ is an order
  continuous Banach lattice, i.e., $M^{\hat\varphi}_u(\QB)$ is
  complete for $\|\cdot\|_{\hat\varphi,\QB}$ and the norm
  $\|\cdot\|_{\hat\varphi,\QB}$ is order-continuous w.r.t.  the
  $\QB$-a.s. order:
  \begin{equation}
    \label{eq:LatticeNormQLebesgue} %
    |\xi_n|\leq \eta\in M_u^{\hat\varphi}(\QB), \,\xi_n\rightarrow
    0,\, \QB\text{-a.s. } \Rightarrow \, \|\xi_n\|_{\hat\varphi,\QB}
    \rightarrow 0.
  \end{equation}

\end{theorem}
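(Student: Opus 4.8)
The plan is to prove the two assertions of the theorem in turn: the order-continuity (\ref{eq:LatticeNormQLebesgue}) first, since it is the more direct consequence of the machinery already in place, and then the norm-completeness, which will in fact \emph{use} order-continuity as an ingredient. \textbf{Order-continuity.} Given $|\xi_n|\leq \eta\in M^{\hat\varphi}_u(\QB)$ with $\xi_n\rightarrow 0$ $\QB$-a.s., I would lift the situation to $L^0=L^0(\PB)$ by (\ref{eq:PiConti1}) (applied with limit $\xi=0$): choose $X_n,X,Y\in L^0$ with $\pi(X_n)=\xi_n$, $\pi(X)=0$, $\pi(Y)=\eta$, $|X_n|\leq |Y|$ and $X_n\rightarrow X$ $\PB$-a.s. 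Since $\pi(X)=0$ one has $X\in\NC_\PB(\QB)$, so $\|X\|_{\hat\varphi}=0$ and $X\in M^{\hat\varphi}_u$; and since $\NC_\PB(\QB)\subset M^{\hat\varphi}_u$, the relation $\pi(Y)=\eta\in M^{\hat\varphi}_u(\QB)$ forces $Y\in M^{\hat\varphi}_u$. Then $X_n-X\rightarrow 0$ $\PB$-a.s.\ and $|X_n-X|\leq |Y|+|X|\in M^{\hat\varphi}_u$, so Lemma~\ref{lem:OrderContiSeminorm} gives $\|X_n-X\|_{\hat\varphi}\rightarrow 0$ and hence $\|\xi_n\|_{\hat\varphi,\QB}=\|X_n\|_{\hat\varphi}\leq \|X_n-X\|_{\hat\varphi}+\|X\|_{\hat\varphi}\rightarrow 0$.

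\textbf{Two preliminaries for completeness.} First, $M^{\hat\varphi}_u(\QB)$ embeds continuously into $L^1(\QB)$: taking $Z=\Zh$ in Lemma~\ref{lem:SeminormEmbedding} and using $d\QB/d\PB=\cH\Zh$, one gets for $\xi=\pi(X)$ that $\EB_\QB[|\xi|]=\cH\EB[|X|\Zh]\leq \cH(1+\varphi^*_0(\Zh))\|\xi\|_{\hat\varphi,\QB}$, a finite multiple of the gauge norm. Second, the norm enjoys the Fatou property on $L^0(\QB)$: if $\zeta_k\rightarrow \zeta$ $\QB$-a.s.\ then $\|\zeta\|_{\hat\varphi,\QB}\leq \liminf_k\|\zeta_k\|_{\hat\varphi,\QB}$, which I would obtain by lifting the a.s.\ convergence through (\ref{eq:PiConti2}) to $W_k\rightarrow W$ $\PB$-a.s.\ and invoking (\ref{eq:NormLSC}) of Lemma~\ref{lem:LatticeSemiNorm1}.

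\textbf{Completeness.} Let $(\xi_n)$ be Cauchy for $\|\cdot\|_{\hat\varphi,\QB}$. By the embedding it is Cauchy, hence convergent, in $L^1(\QB)$; call the limit $\xi$ and pass to a subsequence with $\xi_{n_k}\rightarrow \xi$ $\QB$-a.s. For each fixed $n$, applying the Fatou property to $\xi_{n_k}-\xi_n\rightarrow \xi-\xi_n$ gives $\|\xi-\xi_n\|_{\hat\varphi,\QB}\leq \liminf_k\|\xi_{n_k}-\xi_n\|_{\hat\varphi,\QB}$, which is below any prescribed $\varepsilon$ once $n$ is large, by the Cauchy property; thus $\|\xi-\xi_n\|_{\hat\varphi,\QB}\rightarrow 0$ and in particular $\xi\in L^{\hat\varphi}(\QB)$. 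It remains to check $\xi\in M^{\hat\varphi}_u(\QB)$, i.e.\ that the space is norm-closed. Writing $\xi=\pi(X)$ and using $\|\xi\mathds1_{\{|\xi|>N\}}\|_{\hat\varphi,\QB}=\|X\mathds1_{\{|X|>N\}}\|_{\hat\varphi}$, membership in $M^{\hat\varphi}_u(\QB)$ is by Lemma~\ref{lem:ChMphiUNorm1}(2) equivalent to $\lim_N\|\xi\mathds1_{\{|\xi|>N\}}\|_{\hat\varphi,\QB}=0$. For fixed $n$, the truncations $\xi_n\mathds1_{\{|\xi|>N\}}$ satisfy $|\xi_n\mathds1_{\{|\xi|>N\}}|\leq |\xi_n|\in M^{\hat\varphi}_u(\QB)$ and tend to $0$ $\QB$-a.s.\ as $N\rightarrow\infty$ (because $|\xi|<\infty$ $\QB$-a.s.), so the already-established order-continuity (\ref{eq:LatticeNormQLebesgue}) yields $\lim_N\|\xi_n\mathds1_{\{|\xi|>N\}}\|_{\hat\varphi,\QB}=0$. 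Since $\|\xi\mathds1_{\{|\xi|>N\}}\|_{\hat\varphi,\QB}\leq \|\xi-\xi_n\|_{\hat\varphi,\QB}+\|\xi_n\mathds1_{\{|\xi|>N\}}\|_{\hat\varphi,\QB}$, taking $\limsup_N$ and then $n\rightarrow\infty$ gives $\lim_N\|\xi\mathds1_{\{|\xi|>N\}}\|_{\hat\varphi,\QB}=0$, i.e.\ $\xi\in M^{\hat\varphi}_u(\QB)$, and $\xi_n\rightarrow \xi$ in norm.

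The hard part will be the completeness, and within it the verification that the limit lies in $M^{\hat\varphi}_u(\QB)$ rather than merely in the larger space $L^{\hat\varphi}(\QB)$: this is exactly where the ``uniform integrability'' characterization (Lemma~\ref{lem:ChMphiUNorm1}) and the order-continuity must cooperate, and where one must be scrupulous in transporting $\PB$-statements to $\QB$-statements through the normal homomorphism $\pi$. The embedding into $L^1(\QB)$ via the maximal density $\Zh$ of Lemma~\ref{lem:SensitivityMaximal1} is what supplies a candidate limit in the first place, and the a.s.-lower semicontinuity (\ref{eq:NormLSC}) is what converts $L^1(\QB)$-convergence back into gauge-norm convergence.
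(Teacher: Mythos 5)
Your proof is correct and takes essentially the same route as the paper: the $L^1(\QB)$-embedding via the maximal density $\Zh$ (Lemma~\ref{lem:SeminormEmbedding}), the a.s.\ lower semicontinuity (\ref{eq:NormLSC}) lifted through $\pi$ by (\ref{eq:PiConti2}), the Cauchy-subsequence argument for completeness, and the lifting through (\ref{eq:PiConti1}) combined with Lemma~\ref{lem:OrderContiSeminorm} for order-continuity are exactly the paper's ingredients. The only, harmless, difference is organizational: you establish (\ref{eq:LatticeNormQLebesgue}) first and invoke it to show the limit lies in $M^{\hat\varphi}_u(\QB)$, whereas the paper runs the same truncation estimate at the $\PB$-level, getting $\|X_n\mathds1_{\{|X|>N\}}\|_{\hat\varphi}\rightarrow 0$ directly from $X_n\in M^{\hat\varphi}_u$ via Lemma~\ref{lem:ChMphiUNorm1} and a diagonal argument.
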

On this occasion, we shall prove also the following at once:
\begin{proposition}
  \label{prop:BanachLatticeL}
  $L^{\hat\varphi}(\QB)$ is a Banach lattice for the lattice norm
  $\|\cdot\|_{\hat\varphi,\QB}$ and $M^{\hat\varphi}(\QB)$ is its
  closed linear subspace (hence itself a Banach lattice).
\end{proposition}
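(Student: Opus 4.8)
The plan is to prove both statements together by first upgrading the seminormed lattice structure on $L^{\hat\varphi}(\QB)$ to a complete one, the order-continuity of the norm on $M^{\hat\varphi}_u(\QB)$ being a largely separate matter handled by transporting Lemma~\ref{lem:OrderContiSeminorm} through $\pi$. Since the lattice-norm properties (\ref{eq:Seminorm1}) are already in hand, the only real content is \emph{completeness}. The decisive tool is the continuous embedding of $L^{\hat\varphi}(\QB)$ into $L^1(\QB)$: applying Lemma~\ref{lem:SeminormEmbedding} with $Z=\Zh$ and recalling $\varphi^*_0(\Zh)\leq 1$ and $d\QB/d\PB=\cH\Zh$, one gets $\|\xi\|_{L^1(\QB)}=\EB[\Zh]^{-1}\EB[|X|\Zh]\leq 2\cH\|\xi\|_{\hat\varphi,\QB}$ for $\xi=\pi(X)$, so norm convergence for $\|\cdot\|_{\hat\varphi,\QB}$ forces convergence in $L^1(\QB)$. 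I would also record the \emph{Fatou property of the gauge} in the $\QB$-picture: by Lemma~\ref{lem:LatticeSemiNorm1} and the lifting (\ref{eq:PiConti2}), whenever $\xi_k\to\xi$ $\QB$-a.s.\ one has $\|\xi\|_{\hat\varphi,\QB}\leq\liminf_k\|\xi_k\|_{\hat\varphi,\QB}$.

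With these two ingredients, completeness of $L^{\hat\varphi}(\QB)$ follows by the standard Riesz--Fischer scheme. Given a $\|\cdot\|_{\hat\varphi,\QB}$-Cauchy sequence $(\xi_n)_n$, the embedding makes it $L^1(\QB)$-Cauchy, hence convergent in $L^1(\QB)$ to some $\xi$ along a subsequence $(\xi_{n_k})_k$ converging $\QB$-a.s.; applying the lower semicontinuity above to the differences $\xi_{n_k}-\xi_n\to\xi-\xi_n$ ($\QB$-a.s., $k\to\infty$) gives $\|\xi-\xi_n\|_{\hat\varphi,\QB}\leq\liminf_k\|\xi_{n_k}-\xi_n\|_{\hat\varphi,\QB}$, which is small for large $n$, whence $\xi\in L^{\hat\varphi}(\QB)$ and $\xi_n\to\xi$ in norm. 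This proves $L^{\hat\varphi}(\QB)$ is a Banach lattice. For Proposition~\ref{prop:BanachLatticeL} it then remains to see $M^{\hat\varphi}(\QB)$ is closed: if $\xi_n=\pi(X_n)\in M^{\hat\varphi}(\QB)$ and $\|\xi-\xi_n\|_{\hat\varphi,\QB}\to 0$ with $\xi=\pi(X)\in L^{\hat\varphi}(\QB)$, fix $\alpha>0$ and pick $n$ with $\|\xi-\xi_n\|_{\hat\varphi,\QB}\leq 1/(2\alpha)$, so that $\hat\varphi(2\alpha|X-X_n|)\leq 1$ by Lemma~\ref{lem:LatticeSemiNorm1}; since $\hat\varphi(2\alpha|X_n|)<\infty$, convexity and monotonicity of $\hat\varphi$ yield $\hat\varphi(\alpha|X|)\leq\frac{1}{2}\hat\varphi(2\alpha|X-X_n|)+\frac{1}{2}\hat\varphi(2\alpha|X_n|)<\infty$ for every $\alpha>0$, i.e.\ $X\in M^{\hat\varphi}$.

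For Theorem~\ref{thm:MphiUOrderContiBanach} I would first establish the order-continuity (\ref{eq:LatticeNormQLebesgue}): given $|\xi_n|\leq\eta\in M^{\hat\varphi}_u(\QB)$ with $\xi_n\to 0$ $\QB$-a.s., the lifting (\ref{eq:PiConti1}) produces $X_n,Y\in L^0$ with $|X_n|\leq|Y|$, $X_n\to 0$ $\PB$-a.s., and $Y\in M^{\hat\varphi}_u$ (because $\NC_\PB(\QB)\subset M^{\hat\varphi}_u$ by (\ref{eq:NPQ})), so Lemma~\ref{lem:OrderContiSeminorm} gives $\|\xi_n\|_{\hat\varphi,\QB}=\|X_n\|_{\hat\varphi}\to 0$. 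Finally, I would show $M^{\hat\varphi}_u(\QB)$ is closed in $L^{\hat\varphi}(\QB)$, hence complete. Here the $\QB$-form of Lemma~\ref{lem:ChMphiUNorm1}(2) is convenient: since $\pi(X\mathds1_{\{|X|>N\}})=\xi\mathds1_{\{|\xi|>N\}}$, membership $\xi\in M^{\hat\varphi}_u(\QB)$ is equivalent to $\lim_N\|\xi\mathds1_{\{|\xi|>N\}}\|_{\hat\varphi,\QB}=0$. For a norm-limit $\xi$ of $\xi_n\in M^{\hat\varphi}_u(\QB)$, a three-$\varepsilon$ estimate bounds $\|\xi\mathds1_{\{|\xi|>N\}}\|_{\hat\varphi,\QB}$ by $\|\xi-\xi_n\|_{\hat\varphi,\QB}+\|\xi_n\mathds1_{\{|\xi|>N\}}\|_{\hat\varphi,\QB}$, where the second term tends to $0$ as $N\to\infty$ by the order-continuity just proved (using $\xi\in L^1(\QB)$, so $\mathds1_{\{|\xi|>N\}}\downarrow 0$ $\QB$-a.s.). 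The main obstacle is the completeness step: everything hinges on pairing the $L^1(\QB)$-domination from Lemma~\ref{lem:SeminormEmbedding} with the a.s.\ lower semicontinuity of the gauge, and thus on having the maximal density $\Zh$ of Lemma~\ref{lem:SensitivityMaximal1} available so that the ambient $L^1(\QB)$ is genuinely a Banach space into which $\|\cdot\|_{\hat\varphi,\QB}$ embeds.
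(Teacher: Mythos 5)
Your proposal is correct and follows essentially the same route as the paper: completeness of $L^{\hat\varphi}(\QB)$ via the $L^1(\QB)$-embedding from Lemma~\ref{lem:SeminormEmbedding} applied with $\Zh$ together with the a.s.\ lower semicontinuity of the gauge transported through $\pi$ (the paper's Lemma~\ref{lem:LatticeNormQ}), followed by the identical convexity estimate $\hat\varphi(\alpha|X|)\leq\frac12\hat\varphi(2\alpha|X-X_n|)+\frac12\hat\varphi(2\alpha|X_n|)$ for the closedness of $M^{\hat\varphi}(\QB)$. The only cosmetic deviations are in the material beyond the stated proposition (for the closedness of $M^{\hat\varphi}_u(\QB)$ you invoke the just-proved order-continuity where the paper cites Lemma~\ref{lem:ChMphiUNorm1}), and your embedding constant $2\cH$ is in fact the correct one, the paper's ``$c_\QB=2\EB[\Zh]$'' being a harmless typo.
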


\begin{lemma}
  \label{lem:LatticeNormQ}
  $\|\cdot\|_{\hat\varphi,\QB}:L^0(\QB)\rightarrow \RB\cup\{+\infty\}$
  satisfies the following:
  \begin{align}
    \label{eq:LatticeNormQLSC}
    &\xi_n,\xi\in L^0(\QB),\,\xi_n\rightarrow
    \xi,\,\QB\text{-a.s. }\Rightarrow\, \|\xi\|_{\hat\varphi,\QB}\leq
    \liminf_n\|\xi_n\|_{\hat\varphi,\QB};
    \\ \label{eq:LatticeNormEmbedding}
    & \|\xi\|_{L^1(\QB)}\leq c_\QB\|\xi\|_{\hat\varphi,\QB},\forall
    \xi\in L^0(\QB)\text{ where } c_\QB=2\EB[\Zh].
  \end{align}
\end{lemma}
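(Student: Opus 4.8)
The plan is to transport both properties from the seminormed space $(L^0,\|\cdot\|_{\hat\varphi})$, where they (or their analogues) are already available from Section~\ref{sec:Gauge}, down to the quotient $L^0(\QB)$ via the normal lattice homomorphism $\pi$ of Lemma~\ref{lem:Sensitive1}. The two facts I rely on are the defining identity (\ref{eq:LatticeNormQ}), $\|\pi(X)\|_{\hat\varphi,\QB}=\|X\|_{\hat\varphi}$, already known to be representative-independent by (\ref{eq:ProbQ}); and the lifting property (\ref{eq:PiConti2}) of Remark~\ref{rem:LatticeHomo}, which converts $\QB$-a.s. convergence downstairs into $\PB$-a.s. convergence of suitable representatives upstairs. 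With these in hand, neither assertion requires new analytic input; the content is a careful translation across $\pi$.

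For the lower semicontinuity (\ref{eq:LatticeNormQLSC}), suppose $\xi_n\to\xi$ $\QB$-a.s. First I would apply (\ref{eq:PiConti2}) to obtain representatives $X_n,X\in L^0$ with $\pi(X_n)=\xi_n$, $\pi(X)=\xi$ and $X_n\to X$ $\PB$-a.s. Then (\ref{eq:NormLSC}) of Lemma~\ref{lem:LatticeSemiNorm1} gives $\|X\|_{\hat\varphi}\le\liminf_n\|X_n\|_{\hat\varphi}$, and rewriting both sides through (\ref{eq:LatticeNormQ}) yields exactly $\|\xi\|_{\hat\varphi,\QB}\le\liminf_n\|\xi_n\|_{\hat\varphi,\QB}$. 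This step is insensitive to the particular lifts chosen, since $\|\cdot\|_{\hat\varphi,\QB}$ is well-defined on $L^0(\QB)$.

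For the embedding (\ref{eq:LatticeNormEmbedding}), fix $\xi\in L^0(\QB)$ and a representative $X\in L^0$ with $\pi(X)=\xi$, so $|\xi|=\pi(|X|)$. Since $d\QB/d\PB=\cH\Zh$ with $\cH=\EB[\Zh]^{-1}$, I would compute $\|\xi\|_{L^1(\QB)}=\EB_\QB[|X|]=\cH\,\EB[\Zh|X|]$. Applying Lemma~\ref{lem:SeminormEmbedding} to the maximal density $Z=\Zh$ and invoking $\varphi^*_0(\Zh)\le 1$ from (\ref{eq:SensitivityMaximal}) gives $\EB[\Zh|X|]\le(1+\varphi^*_0(\Zh))\|X\|_{\hat\varphi}\le 2\|X\|_{\hat\varphi}$, whence $\|\xi\|_{L^1(\QB)}\le 2\cH\,\|\xi\|_{\hat\varphi,\QB}$, establishing (\ref{eq:LatticeNormEmbedding}) with the constant $c_\QB=2\cH$ determined by the normalization of $\QB$ and the bound $1+\varphi^*_0(\Zh)\le 2$.

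I do not anticipate a genuine obstacle once Section~\ref{sec:Gauge} and Lemma~\ref{lem:Sensitive1} are in place. The one point that deserves care is the direction of the lifting in the first part: a.s. convergence is \emph{not} preserved automatically when passing between $\PB$- and $\QB$-equivalence classes unless one selects representatives adapted to the convergence, and this is precisely what (\ref{eq:PiConti2}) supplies. In the second part, the essential ingredient is the \emph{maximality} of $\Zh$ expressed in (\ref{eq:SensitivityMaximal}), which guarantees that a single density, lying in $\dom\varphi^*_0$ with $\varphi^*_0(\Zh)\le 1$, controls the $L^1(\QB)$-norm through the $\hat\varphi$-gauge uniformly in $\xi$.
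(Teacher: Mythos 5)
Your proof is correct and follows the paper's own argument essentially verbatim: you lift the $\QB$-a.s.\ convergent sequence through $\pi$ via (\ref{eq:PiConti2}) and apply (\ref{eq:NormLSC}) for the lower semicontinuity, then use Lemma~\ref{lem:SeminormEmbedding} with $Z=\Zh$ together with $\varphi^*_0(\Zh)\leq 1$ for the embedding. One remark: your constant $2\cH=2\EB[\Zh]^{-1}$ is what the computation (and indeed the paper's own proof, which derives $\cH^{-1}\|\xi\|_{L^1(\QB)}\leq 2\|\xi\|_{\hat\varphi,\QB}$) actually yields, so the value $c_\QB=2\EB[\Zh]$ displayed in the lemma is evidently a slip for $2\EB[\Zh]^{-1}$ --- harmless, since only the existence of a finite constant is used in the sequel.
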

\begin{proof}
  If $\xi_n,\xi\in L^0(\QB)$, $\xi_n\rightarrow \xi$, $\QB$-a.s., then
  by (\ref{eq:PiConti2}), there exist $X_n,X\in L^0$ such that
  $\xi_n=\pi(X_n)$, $\xi=\pi(X)$ and $X_n\rightarrow X$,
  $\PB$-a.s. Thus by (\ref{eq:LatticeNormQ}) and (\ref{eq:NormLSC}),
  $\|\xi\|_{\hat\varphi,\QB}=\|X\|_{\hat\varphi}\leq
  \liminf_n\|X_n\|_{\hat\varphi}=\liminf_n\|\xi_n\|_{\hat\varphi,\QB}$,
  and we have (\ref{eq:LatticeNormQLSC}).  For
  (\ref{eq:LatticeNormEmbedding}), Lemma~\ref{lem:SeminormEmbedding}
  tells us that for each $\xi=\pi(X)\in L^0(\QB)$,
  $\cH^{-1}\|\xi\|_{L^1(\QB)}=\EB[X\Zh]\leq
  (1+\varphi^*_0(\Zh))\|X\|_{\hat\varphi}=
  (1+\varphi^*_0(\Zh))\|\xi\|_{\hat\varphi,\QB}$, thus we have
  (\ref{eq:LatticeNormEmbedding}) since $\varphi^*_0(\Zh)\leq 1$.
\end{proof}

\begin{proof}[Proof of Proposition~\ref{prop:BanachLatticeL} and
  Theorem~\ref{thm:MphiUOrderContiBanach}]
  We already know that
  $(L^{\hat\varphi}(\QB),\|\cdot\|_{\hat\varphi,\QB})$ is a normed
  Riesz space, and $M^{\hat\varphi}_u(\QB)$ and $M^{\hat\varphi}(\QB)$
  are its solid vector subspaces. To see that $L^{\hat\varphi}(\QB)$
  is complete, let $(\xi_n)_n\in L^{\hat\varphi}(\QB)$ be a Cauchy
  sequence for $\|\cdot\|_{\hat\varphi,\QB}$. Then by
  (\ref{eq:LatticeNormEmbedding}), it is also Cauchy in $L^1(\QB)$,
  hence admits the $\|\cdot\|_{L^1(\QB)}$-limit $\xi$ in $L^1(\QB)$, and we can choose a
  subsequence $(\xi_{n_k})_k$ so that $\xi_{n_k}\rightarrow \xi$,
  $\QB$-a.s. Then (\ref{eq:LatticeNormQLSC}) shows that
  $\|\xi-\xi_m\|_{\hat\varphi,\QB}\leq
  \liminf_k\|\xi_{n_k}-\xi_m\|_{\hat\varphi,\QB}$ for all $m$. Since
  the original sequence is Cauchy for $\|\cdot\|_{\hat\varphi,\QB}$,
  this shows that $\|\xi\|_{\hat\varphi,\QB}<\infty$ (hence $\xi\in
  L^{\hat\varphi}(\QB)$) and
  $\|\xi-\xi_n\|_{\hat\varphi,\QB}\rightarrow 0$.

  Suppose in addition that each $\xi_n$ belongs to
  $M^{\hat\varphi}(\QB)$, and write $\xi_n=\pi(X_n)$ with $X_n\in
  M^{\hat\varphi}$ and $\xi=\pi(X)$ with $X\in L^{\hat\varphi}$. Then
  for all $\alpha>0$, there is some large $n$ so that
  $\|X-X_n\|_{\hat\varphi} =\|\xi-\xi_n\|_{\hat\varphi,\QB}<
  1/2\alpha$ for which $\hat\varphi(2\alpha|X-X_n|)\leq 1$ , hence
  $\hat\varphi(\alpha|X|)\leq \frac12\hat\varphi(2\alpha|X-X_n|)
  +\frac12\hat\varphi(2\alpha|X_n|)\leq
  1+\frac12\hat\varphi(2\alpha|X_n|)$.  Consequently, $X\in
  M^{\hat\varphi}$, thus $\xi=\pi(X)\in M^{\hat\varphi}(\QB)$, and we
  deduce that $M^{\hat\varphi}(\QB)$ is closed in
  $L^{\hat\varphi}(\QB)$.

  For Theorem~\ref{thm:MphiUOrderContiBanach}, it remains to show that
  $M^{\hat\varphi}_u(\QB)$ is closed in $L^{\hat\varphi}(\QB)$, and
  $\|\cdot\|_{\hat\varphi,\QB}$ is order-continuous for the
  $\QB$-a.s. order (i.e., (\ref{eq:LatticeNormQLebesgue})). For the
  closedness, let $(\xi_n)_n$ and $\xi$ be as in the first paragraph
  and suppose that $\xi_n\in M^{\hat\varphi}_u(\QB)$ for each
  $n$. Then $\xi_n=\pi(X_n)$ with $X_n\in M^{\hat\varphi}_u$ for each
  $n$, and $\xi=\pi(X)$ with $X\in L^{\hat\varphi}$. Observe that
  \begin{align*}
    \|X\mathds1_{\{|X|>N\}}\|_{\hat\varphi} \leq
    \|X-X_n\|_{\hat\varphi} +\|X_n\mathds1_{\{|X|>N\}}\|_{\hat\varphi}
    =\|\xi- \xi_n\|_{\hat\varphi,\QB}
    +\|X_n\mathds1_{\{|X|>N\}}\|_{\hat\varphi}.
  \end{align*}
  The first term in the right hand side tends to $0$ as $n\rightarrow
  \infty$, while for each $n$, the second term tends to $0$ as
  $N\rightarrow\infty$ since $X\in M^{\hat\varphi}_u$. Taking a
  diagonal, we see that $X\in M^{\hat\varphi}_u$, hence $\xi=\pi(X)\in
  M^{\hat\varphi}_u(\QB)$. Therefore, $M^{\hat\varphi}_u(\QB)$ is
  closed.

  Finally, we show (\ref{eq:LatticeNormQLebesgue}). Let
  $(\xi_n)_n\subset M^{\hat\varphi}_u(\QB)$, $|\xi_n|\leq \eta\in
  M^{\hat\varphi}_u(\QB)$ and $\xi_n\rightarrow 0$ $\QB$-a.s. Then by
  (\ref{eq:PiConti1}), we can choose $X_n,Y$ with $\xi_n=\pi(X_n)$,
  $\eta=\pi(Y)$ (hence $Y\in M^{\hat\varphi}_u$), $|X_n|\leq |Y|$ and
  $X_n\rightarrow 0$ $\PB$-a.s. Then (\ref{eq:OrderContiSeminorm1})
  and (\ref{eq:LatticeNormQ}) show that
  $\|\xi_n\|_{\hat\varphi,\QB}=\|X_n\|_{\hat\varphi}\rightarrow 0$,
  and we deduce (\ref{eq:LatticeNormQLebesgue}).
\end{proof}

\begin{remark}[Sensitivity]
  \label{rem:OrderContiBanach}
  In general, $\QB$ is only absolutely continuous with respect to the
  original reference measure $\PB$ (not equivalent). From
  (\ref{eq:ProbQ}), a necessary and sufficient condition for the
  possibility of choosing an equivalent $\QB$ ($\QB\sim\PB$) is that
  \begin{equation}
    \label{eq:Sensitive}
    \forall A\in \FC\text{ with }\PB(A)>0,\, 
    \exists\varepsilon>0,\, \varphi_0(\varepsilon\mathds1_A)>0.
  \end{equation}
  In financial mathematics, this condition is called the
  \emph{sensitivity} of $\varphi_0$. See \citep[][Ch.~4]{MR2779313}
  for more information.

\end{remark}
\begin{corollary}
  \label{cor:BanachLatticeSensitive}
  If $\varphi_0$ is sensitive in the sense of (\ref{eq:Sensitive}),
  $(M^{\hat\varphi}_u,\|\cdot\|_{\hat\varphi})$ itself is an order
  continuous Banach lattice.
\end{corollary}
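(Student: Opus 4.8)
The plan is to deduce this from Theorem~\ref{thm:MphiUOrderContiBanach} by showing that, under sensitivity, the change of measure carried out in Section~\ref{sec:ChMeas} is vacuous: the probability $\QB$ of Lemma~\ref{lem:SensitivityMaximal1} may be chosen \emph{equivalent} to $\PB$, and then the quotient map $\pi$ of Lemma~\ref{lem:Sensitive1} is a lattice isomorphism under which $(M^{\hat\varphi}_u,\|\cdot\|_{\hat\varphi})$ is \emph{identical} to $(M^{\hat\varphi}_u(\QB),\|\cdot\|_{\hat\varphi,\QB})$, so the conclusion transfers verbatim.

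First I would invoke Remark~\ref{rem:OrderContiBanach}: by the characterisation (\ref{eq:ProbQ}) of the $\hat\varphi$-null sets, $\QB\sim\PB$ is possible \emph{exactly} when (\ref{eq:Sensitive}) holds. Concretely, if the maximiser $\Zh$ of (\ref{eq:SensitivityMaximal}) vanished on a set $A$ with $\PB(A)>0$, then sensitivity would give an $\varepsilon>0$ with $\varphi_0(\varepsilon\mathds1_A)>0$; since $\varphi_0=\hat\varphi|_{L^\infty}$, the supremum defining $\hat\varphi(\varepsilon\mathds1_A)$ would be positive, so some $Z\in\dom\varphi^*_0$ would satisfy $\EB[Z\mathds1_A]>0$, i.e.\ charge $A$, and by convexity together with $\varphi^*_0(0)=0$ one has $\varphi^*_0(\delta Z)\leq 1$ for small $\delta>0$. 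But then $\bar Z:=(\Zh+\delta Z)/2$ would satisfy $\varphi^*_0(\bar Z)\leq 1$ and $\PB(\bar Z>0)>\PB(\Zh>0)$, contradicting the maximality in (\ref{eq:SensitivityMaximal}). Hence under (\ref{eq:Sensitive}) we may take $\Zh>0$ $\PB$-a.s., i.e.\ $d\QB/d\PB>0$ a.s. This is the only substantive point of the argument, and it merely repeats the maximality reasoning already used to verify (\ref{eq:HalmosSavage1}).

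Once $\QB\sim\PB$, the $\PB$-null and $\QB$-null sets coincide, so $L^0(\QB)=L^0(\PB)$ as ordered vector spaces and the kernel $\NC_\PB(\QB)$ of (\ref{eq:NPQ}) reduces to $\{0\}$; consequently $\pi$ is the identity lattice isomorphism. Under this identification $M^{\hat\varphi}_u(\QB)=M^{\hat\varphi}_u$, and by the definition (\ref{eq:LatticeNormQ}) the norm $\|\cdot\|_{\hat\varphi,\QB}$ \emph{equals} $\|\cdot\|_{\hat\varphi}$, which is now a genuine norm by (\ref{eq:SeminormZero}), its zero set being $\NC_\PB(\QB)=\{0\}$. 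Since $\QB$-a.s.\ and $\PB$-a.s.\ convergence also coincide, the completeness and the order-continuity (\ref{eq:LatticeNormQLebesgue}) furnished by Theorem~\ref{thm:MphiUOrderContiBanach} translate directly into the corresponding statements for $(M^{\hat\varphi}_u,\|\cdot\|_{\hat\varphi})$, which is therefore an order-continuous Banach lattice. There is thus no analytic obstacle beyond the equivalence $\QB\sim\PB$ established above.
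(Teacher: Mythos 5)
Your proposal is correct and is essentially the paper's own (implicit) argument: the corollary is stated immediately after Remark~\ref{rem:OrderContiBanach} precisely because sensitivity allows choosing $\QB\sim\PB$, whereupon $\NC_\PB(\QB)=\{0\}$, the quotient map $\pi$ is the identity, and Theorem~\ref{thm:MphiUOrderContiBanach} transfers verbatim to $(M^{\hat\varphi}_u,\|\cdot\|_{\hat\varphi})$. Note only that your middle paragraph redundantly re-runs the maximality argument of Lemma~\ref{lem:SensitivityMaximal1} (including the scaling step via $\varphi^*_0(0)=0$, which is the paper's own device there), whereas the equivalence between $\QB\sim\PB$ and (\ref{eq:Sensitive}) already falls out of (\ref{eq:ProbQ}) applied to indicators, exactly as Remark~\ref{rem:OrderContiBanach} records.
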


\subsection{Proof of Theorem~\ref{thm:MainLebExt1}}
\label{sec:ProofThMax}

We now proceed to Theorem~\ref{thm:MainLebExt1}. Recall that
$\NC_\PB(\QB)\subset M^{\hat\varphi}_u\subset \DC_0\cap (-\DC_0)$
where $\DC_0$ is defined by (\ref{eq:DX}). Thus if $X\in \DC_0$ and
$Y=X$ $\QB$-a.s. ($\Leftrightarrow$ $Y-X\in \NC_\PB(\QB)$), we have
$Y=X+(Y-X)\in \DC_0+\DC_0=\DC_0$ since $\DC_0$ is a convex cone. In
this case, we have also that $\hat\varphi(X)=\hat\varphi(Y)$. Indeed,
$X=Y$ $\QB$-a.s. implies $\EB[|X-Y|Z]=\EB[|X-Y|Z\mathds1_{\{X\neq
  Y\}}]=0$ for all $Z\in\dom\varphi^*_0$ by (\ref{eq:HalmosSavage1}),
hence $\hat\varphi(X)
=\sup_{Z\in\dom\varphi^*_0}(\EB[XZ]-\varphi^*_0(Z))
=\sup_{Z\in\dom\varphi^*_0}(\EB[YZ]-\varphi^*_0(Z))
=\hat\varphi(Y)$. Therefore,
\begin{align}\label{eq:PhiHatQ}
  \hat\varphi_\QB(\xi):=\hat\varphi(X)\text{ if }\xi=\pi(X)\in
  \DC_0(\QB):=\pi(\DC_0)
\end{align}
is well-defined as a function on $\DC_0(\QB):=\pi(\DC_0)$, hence in
particular on $L^{\hat\varphi}(\QB)$, $M^{\hat\varphi}(\QB)$ and on
$M^{\hat\varphi}_u(\QB)$. $\hat\varphi_\QB$ is convex (resp. monotone)
since $\pi$ is linear and $\hat\varphi$ is convex (resp. both $\pi$
and $\hat\varphi$ are monotone), and is finite on
$M^{\hat\varphi}(\QB)$ (hence on $M^{\hat\varphi}_u(\QB)$ in
particular).

\begin{proof}[Proof of Theorem~\ref{thm:MainLebExt1}]
  Recall that any monotone convex function on a Banach lattice is
  norm-continuous on the interior of its effective domain by the
  extended Namioka-Klee theorem \citep[][Theorem~1]{MR2648595}. Thus
  $\hat\varphi_\QB:M^{\hat\varphi}_u(\QB)\rightarrow\RB$ is
  $\|\cdot\|_{\hat\varphi,\QB}$-continuous as a finite valued monotone
  convex function on a Banach lattice $M^{\hat\varphi}_u(\QB)$, while
  since $\|\cdot\|_{\hat\varphi,\QB}$ is $\QB$-order continuous in the
  sense of (\ref{eq:LatticeNormQLebesgue}) by
  Theorem~\ref{thm:MphiUOrderContiBanach}, we deduce that
  $\hat\varphi_\QB:M^{\hat\varphi}_u(\QB)\rightarrow \RB$ is
  $\QB$-order continuous. Thus recalling that
  $\hat\varphi=\hat\varphi_\QB\circ\pi$ and $\pi:L^0(\PB)\rightarrow
  L^0(\QB)$ is order continuous, we obtain that
  $\hat\varphi:M^{\hat\varphi}_u\rightarrow \RB$ is $\PB$-order
  continuous. Consequently, $(\hat\varphi,M^{\hat\varphi}_u)$ is
  indeed a Lebesgue extension of $\varphi_0$.

  If $(\varphi,\Xs)$ is another Lebesgue extension, we must have
  $\Xs\subset M^{\hat\varphi}_u$ by (\ref{eq:ElemObs2}), and for any
  $X\in \Xs\subset M^{\hat\varphi}_u$, the Lebesgue properties of
  $\hat\varphi$ and $\varphi$ on $\Xs$ and
  $\hat\varphi|_{L^\infty}=\varphi|_{L^\infty}$ show that
  $\hat\varphi(X)=\lim_n\hat\varphi(X\mathds1_{\{|X|\leq
    n\}})=\lim_n\varphi_0(X\mathds1_{\{|X|\leq
    n\}})=\lim_n\varphi(X\mathds1_{\{|X|\leq n\}})=\varphi(X)$. Thus
  we have $\varphi=\hat\varphi|_\Xs$.
\end{proof}
  
\begin{remark}
  The three Orlicz-type spaces $M^{\hat\varphi}_u(\QB)$,
  $M^{\hat\varphi}(\QB)$ and $L^{\hat\varphi}(\QB)$ are also expressed
  using $\hat\varphi_\QB$ in forms parallel to those of original
  spaces:
\begin{align*}
  L^{\hat\varphi}(\QB)&=\{\xi\in L^0(\QB):\, \hat\varphi_\QB(\alpha|\xi|)<\infty,\,\exists \alpha>0\},\\
  M^{\hat\varphi}(\QB)&=\{\xi\in L^0(\QB):\, \hat\varphi_\QB(\alpha|\xi|)<\infty,\,\forall \alpha>0\},\\
  M^{\hat\varphi}_u(\QB)&=\{\xi\in L^0(\QB):\, \lim_N
  \hat\varphi_\QB(\alpha|\xi|\mathds1_{\{|\xi|>N\}})=0,\,\forall
  \alpha>0\}.
\end{align*}
For the last identity, we note that $\pi(|X|\mathds1_{\{|X|>N\}})
=\pi(|X|)\pi(\mathds1_{\{|X|>N\}})=|\pi(X)|\mathds1_{\{|\pi(X)|>N\}}$
which is straightforward from the definition of $\pi$ in
Lemma~\ref{lem:Sensitive1}.

\end{remark}

\section{Proof of Theorem~\ref{thm:MuCharact}}
\label{sec:MvarphiU}

\begin{proof}[Proof of Theorem~\ref{thm:MuCharact}: (1) $\Rightarrow$ (2)]
  If $\{XZ:\,\varphi^*_0(Z)\leq c\}$ is not uniformly integrable,
  there exists $\varepsilon>0$ such that for any $n$, there exists
  $A_n\in \FC$ and $Z_n\in L^1$ with $\PB(A_n)\leq 1/n$ and
  $\varphi^*_0(Z_n)\leq c$ and $\EB[|X|Z_n\mathds1_{A_n}] >
  \varepsilon$.  But then $\varepsilon < \EB[|X|Z_n\mathds1_{A_n}]
  \leq (1+c)\|X\mathds1_{A_n}\|_{\hat\varphi}$ for all $n$ by
  Lemma~\ref{lem:SeminormEmbedding}, hence $X\not\in
  M^{\hat\varphi}_u$ by Lemma~\ref{lem:ChMphiUNorm1}.
\end{proof}

Recall that if $X\in M^{\hat\varphi}$ (or more generally
$L^{\hat\varphi}$), $XZ\in L^1$ for any $Z\in\dom\varphi^*_0$ by
(\ref{eq:SeminormEmbedding}).
\begin{lemma}
  \label{lem:ProofUI1}
  Let $U\in M^{\hat\varphi}$ and suppose that $\{UZ:\,
  \varphi^*_0(Z)\leq c\}$ is uniformly integrable for each $c$. Then
  $\Lambda_{\beta,U,Y}:=\{Z:\,Z\in \dom\varphi^*_0,\,
  \EB[UYZ]-\varphi^*_0(Z)\geq -\beta\}$ is weakly compact in $L^1$ for
  all $\beta\in\RB$ and $Y\in L^\infty$.
\end{lemma}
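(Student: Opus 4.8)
The plan is to realize $\Lambda_{\beta,U,Y}$ as a weakly closed subset of a single weakly compact level set of $\varphi_0^*$, and then use that a weakly closed subset of a weakly compact set is weakly compact.

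First I would show that $\Lambda_{\beta,U,Y}$ is contained in a level set $\{Z:\varphi_0^*(Z)\le C\}$ for some finite $C=C(\beta,U,Y)$. For $Z\in\Lambda_{\beta,U,Y}$ one has $\varphi_0^*(Z)\le \EB[UYZ]+\beta\le \|Y\|_\infty\EB[|U|Z]+\beta$, using $Z\ge 0$, and the Young-type inequality (\ref{eq:YoungHat1}) applied to $U$ gives $\EB[|U|Z]\le \alpha^{-1}\bigl(\hat\varphi(\alpha|U|)+\varphi_0^*(Z)\bigr)$ for every $\alpha>0$. Choosing $\alpha=2\|Y\|_\infty$ lets me absorb half of $\varphi_0^*(Z)$ into the left-hand side, yielding $\varphi_0^*(Z)\le \hat\varphi(2\|Y\|_\infty|U|)+2\beta=:C$, which is finite \emph{precisely because} $U\in M^{\hat\varphi}$.

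By Assumption~\ref{as:Standing}, $\varphi_0$ has the Lebesgue property on $L^\infty$, so by Theorem~\ref{thm:JSTLinfty} the level set $K:=\{Z\in L^1:\varphi_0^*(Z)\le C\}$ is weakly compact; in particular it is uniformly integrable and weakly sequentially compact. It then remains to show that $\Lambda_{\beta,U,Y}$ is weakly closed inside $K$. Given $Z_n\in\Lambda_{\beta,U,Y}$ with $Z_n\to Z$ weakly, the weak lower semicontinuity of the conjugate $\varphi_0^*$ gives $\varphi_0^*(Z)\le\liminf_n\varphi_0^*(Z_n)$, so I only need $\EB[UYZ_n]\to\EB[UYZ]$ in order to pass the defining inequality to the limit and conclude $Z\in\Lambda_{\beta,U,Y}$.

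This last convergence is the main obstacle, and it is exactly where the standing hypothesis on $U$ enters. Since $UY\notin L^\infty$ in general, weak convergence alone does not deliver it. Instead I would split $UY=UY\mathds1_{\{|UY|\le M\}}+UY\mathds1_{\{|UY|>M\}}$: the bounded part lies in $L^\infty$, so $\EB[UY\mathds1_{\{|UY|\le M\}}Z_n]\to\EB[UY\mathds1_{\{|UY|\le M\}}Z]$ by weak convergence, while the tail is dominated by $\|Y\|_\infty\EB[|U|Z_n\mathds1_{\{|UY|>M\}}]$. Here the hypothesis that $\{UZ:\varphi_0^*(Z)\le c\}$ is uniformly integrable, together with $Z_n,Z\in K$, forces these tails to tend to $0$ uniformly in $n$ as $M\to\infty$; this upgrades the functional $Z\mapsto\EB[UYZ]$ to being weakly continuous on $K$. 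Combining the two estimates gives $\EB[UYZ_n]\to\EB[UYZ]$, whence $\varphi_0^*(Z)-\EB[UYZ]\le\liminf_n\bigl(\varphi_0^*(Z_n)-\EB[UYZ_n]\bigr)\le\beta$, so $Z\in\Lambda_{\beta,U,Y}$. Thus $\Lambda_{\beta,U,Y}$ is a weakly closed subset of the weakly compact set $K$, hence weakly compact.
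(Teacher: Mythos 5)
Your proof is correct, and it shares the paper's two pillars---containment of $\Lambda_{\beta,U,Y}$ in a single sublevel set of $\varphi^*_0$ (obtained, as you do, by a Young-type estimate and absorbing $\tfrac12\varphi^*_0(Z)$, which is exactly where $U\in M^{\hat\varphi}$ and $Z\in\dom\varphi^*_0$ enter) plus Theorem~\ref{thm:JSTLinfty} to make that sublevel set weakly compact---but the closedness step is executed genuinely differently. The paper exploits convexity of $\Lambda_{\beta,U,Y}$: by Dunford--Pettis and Mazur it suffices to prove uniform integrability together with \emph{norm}-closedness, and the latter is soft, since $Z_n\to Z$ in $L^1$ gives $UYZ_n\to UYZ$ in probability, whence $\EB[UYZ_n]\to\EB[UYZ]$ by the UI hypothesis via Vitali's theorem. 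You instead prove \emph{weak} closedness directly, using the UI hypothesis to show that $Z\mapsto \EB[UYZ]$ is weakly continuous on $K:=\{Z:\varphi^*_0(Z)\le C\}$ by uniform approximation with the bounded kernels $UY\mathds1_{\{|UY|\le M\}}$; this costs you the truncation estimate but makes no use of convexity of $\Lambda_{\beta,U,Y}$, and your scaling choice $\alpha=2\|Y\|_\infty$ in (\ref{eq:YoungHat1}) also dispenses with the paper's auxiliary $Z_0\in\dom\varphi^*_0$ and its midpoint trick $\tfrac12 Z+\tfrac12 Z_0$ (needed there because $Z/2$ need not lie in $\dom\varphi^*_0$). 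One point you should make explicit: you verify closedness along \emph{sequences}, and sequential weak closedness is weaker than weak closedness in general; your argument is nevertheless safe, either because the uniform approximation on $K$ upgrades $Z\mapsto\EB[UYZ]$ to genuine (net) weak continuity there---so that $\Lambda_{\beta,U,Y}$ is the sublevel set in $K$ of the weakly lower semicontinuous function $Z\mapsto \varphi^*_0(Z)-\EB[UYZ]$---or by invoking the Eberlein--\v{S}mulian theorem inside the weakly compact set $K$. (Note also that the weak limit $Z$ stays in $K$, since $K$ is convex and norm-closed hence weakly closed, so your uniform tail bound applies to it as well.)
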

\begin{proof}
  Since $\Lambda_{\beta,U,Y}$ is convex, it suffices to show that it
  is norm-closed and uniformly integrable. For the latter, fix an
  arbitrary $Z_0\in \dom\varphi^*_0$, and observe that
  \begin{align*}
    \EB[UYZ]&\leq \EB[2\|Y\|_\infty|U|(Z/2)]\leq
    \EB\left[2\|Y\|_\infty|U|\left(\frac12Z+\frac12Z_0\right)\right]\\
    &\leq\hat\varphi(2\|Y\|_\infty|U|)+\frac12\varphi^*_0(Z)+\frac12\varphi^*_0(Z_0).
  \end{align*}
  Thus $Z\in \Lambda_{\beta,U,Y}$ implies that
  \begin{align*}
    - \beta&\leq \EB[UYZ]-\varphi^*_0(Z)
    \leq\hat\varphi(2\|Y\|_\infty|U|)+\frac12\varphi^*_0(Z_0)-\frac12\varphi^*_0(Z).
  \end{align*}
  Putting
  $\beta':=2\beta+2\hat\varphi(2\|Y\|_\infty|U|)+\varphi^*_0(Z_0)<\infty$
  (since $U\in M^{\hat\varphi}$), this tells us that
  $\Lambda_{\beta,U,Y}\subset \{Z:\,\varphi^*_0(Z)\leq \beta'\}$ and
  the latter set is uniformly integrable by the fundamental assumption
  that $\varphi_0$ is Lebesgue on $L^\infty$ and
  Theorem~\ref{thm:JSTLinfty}.

  To see the closedness, let $Z_n\in \Lambda_{\beta,U,Y}$ and
  $Z_n\rightarrow Z\in L^1$ in norm.  Then $Z_n\rightarrow Z$ in
  probability, hence $UYZ_n\rightarrow UYZ$ in probability as well. On
  the other hand, from what we just proved and the assumption of
  lemma, $(UZ_n)_n$ is uniformly integrable, thus so is $(UYZ_n)_n$
  since $Y\in L^\infty$. Consequently, $\EB[UYZ]=\lim_n\EB[UYZ_n]$,
  and since $\varphi^*_0$ is lower semicontinuous on $L^1$, we have
  also $\varphi^*_0(Z)\leq \liminf_n\varphi_0^*(Z_n)$. Summing up,
  \begin{align*}
    \EB[UYZ]-\varphi^*_0(Z)&\geq
    \lim_n\EB[UYZ_n]-\liminf_n\varphi^*_0(Z_n)\\
    &\geq \limsup_n(\EB[UYZ_n]-\varphi^*_0(Z_n))\geq -\beta.
  \end{align*}
  Hence $Z\in \Lambda_{\beta,U,Y}$.
\end{proof}

\begin{proof}[Proof of Theorem~\ref{thm:MuCharact}: (2) $\Rightarrow$
  (3) and (\ref{eq:MaxAttained1})]
  For $U\in M^{\hat\varphi}$, $Y\in L^\infty$, we put
  $l_{U,Y}(Z):=\EB[UYZ]-\varphi^*_0(Z)$. Then Lemma~\ref{lem:ProofUI1}
  tells us that if $\{UZ:\,\varphi^*_0(Z)\leq c\}$ is uniformly
  integrable for each $c>0$, $l_{U,Y}$ is weakly upper semicontinuous
  and all its upper level sets are weakly compact for each $Y\in
  L^\infty$, and thus $\sup_{Z\in \dom\varphi^*_0}l_{U,Y}(Z)$ is
  attained. By the condition (2) of Theorem~\ref{thm:MuCharact}, this
  applies to $U=X$ and $Y=1$ (constant), and we obtain
  (\ref{eq:MaxAttained1}). For (3), we note that $|X|\leq |X|\vee1\leq
  |X|+1$ and $\{Z:\, \varphi^*_0(Z)\leq c\}$ is uniformly integrable
  for each $c>0$ by Theorem~\ref{thm:JSTLinfty} and the Lebesgue
  property of $\varphi_0$ on $L^\infty$, hence (2) implies also that
  $\{(|X|\vee 1)Z:\, \varphi^*_0(Z)\leq c\}$ is uniformly integrable
  too. Therefore, the above argument applies to $U=|X|\vee 1\in
  M^{\hat\varphi}$ and arbitrary $Y\in L^\infty$, showing that the
  supremum $\sup_{Z\in \dom\varphi^*_0}(\EB[(|X|\vee
  1)YZ]-\varphi^*_0(Z))=\sup_{Z\in \dom\varphi^*_0}l_{|X|\vee 1,Y}(Z)$
  is attained for each $Y\in L^\infty$. This concludes the proof of
  (2) $\Rightarrow$ (3).
\end{proof}

\begin{proof}[Proof of Theorem~\ref{thm:MuCharact}: (2) $\Rightarrow$
  (1)]
  We apply a version of minimax theorem (Theorem~\ref{thm:AppMinimax})
  to the function $L^\infty\times\dom\varphi^*_0\ni (Y,Z)\mapsto
  f(Y,Z):= \EB[|X|YZ]-\varphi^*_0(Z)$. We already know under (2) that
  for each $Y\in L^\infty$, $Z\mapsto f(Y,Z)$ is concave, weakly upper
  semicontinuous on $\dom\varphi^*_0$ and all its level sets are
  weakly compact by Lemma~\ref{lem:ProofUI1} applied to $U=|X|$. On
  the other hand $Y\mapsto f(Y,Z)$ is affine (hence convex). Thus for
  any convex set $C\subset L^\infty$, we have
  \begin{align}\label{eq:ProofUIMinimax}
    \inf_{Y\in C}\sup_{Z\in
      \dom\varphi^*_0}\left(\EB[|X|YZ]-\varphi^*_0(Z)\right)=\sup_{Z\in\dom\varphi^*_0}\inf_{Y\in
      C}\left(\EB[|X|YZ]-\varphi^*_0(Z)\right).
  \end{align}
  Let $C_1$ be the convex hull $\mathrm{conv}(
  \mathds1_{\{|X|>N\}},N\in\NB)$. Observe that for any $n\in\NB$,
  $\lambda_i\geq 0$, $\lambda_1+\cdots+\lambda_n=1$ and
  $N_1<N_2<\cdots<N_n$, we have $\mathds1_{\{|X|>N_n\}}\leq
  \lambda_1\mathds1_{\{|X|>N_1\}}+\cdots+\lambda_n\mathds1_{\{|X|>N_n\}}\leq
  \mathds1_{\{|X|>N_1\}}$ and every element of $C_1$ is written in the
  form of middle expression. Thus for any $\alpha>0$,
  \begin{align*}
    \lim_N&\hat\varphi(\alpha|X|\mathds1_{\{|X|>N\}})=\inf_{Y\in\alpha
      C_1}\hat\varphi(|X|Y) =\inf_{Y\in\alpha C_1}\sup_{Z\in
      \dom\varphi^*_0}\left(\EB[|X|YZ]-\varphi^*_0(Z)\right)\\
    &\stackrel{\text{~(\ref{eq:ProofUIMinimax})}}=\sup_{Z\in
      \dom\varphi^*_0}\left(\inf_{Y\in \alpha
        C_1}\EB[|X|YZ]-\varphi^*_0(Z)\right)\\
    &=\sup_{Z\in \dom\varphi^*_0}\left(\lim_N\alpha
      \EB[|X|\mathds1_{\{|X|>N\}}Z]-\varphi^*_0(Z)\right)=\sup_{Z\in\dom\varphi^*_0}-\varphi^*_0(Z)=0.
  \end{align*}
  Thus $X\in M^{\hat\varphi}_u$.  
\end{proof}

We proceed to the implication (3) $\Rightarrow$ (2). This will follow
from the following version of \emph{perturbed James's theorem} recently
obtained by \citep{orihuelaRuizGalan12:_james}:
\begin{theorem}[\citep{orihuelaRuizGalan12:_james}, Theorem~2]
  \label{thm:James}
  Let $E$ be a real Banach space and
  $f:E\rightarrow\RB\cup\{+\infty\}$ be a function which is coercive,
  i.e.,
  \begin{equation}
    \label{eq:Coercive}
    \lim_{\|x\|\rightarrow\infty}\frac{f(x)}{\|x\|}=+\infty.
  \end{equation}
  Then if the supremum $\sup_{x\in E}(\langle x,x^*\rangle-f(x))$ is
  attained for every $x^*\in E^*$, the level set $\{x\in E:\, f(x)\leq
  c\}$ is relatively weakly compact for each $c\in\RB$.
\end{theorem}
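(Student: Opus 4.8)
The plan is to deduce the statement from the classical James weak compactness theorem in its \emph{sup-attaining} form: a bounded subset $A$ of a Banach space $E$ is relatively weakly compact provided every $x^*\in E^*$ attains its supremum on $A$. Accordingly, I fix $c\in\RB$ and set $C:=\{x\in E:\, f(x)\leq c\}$, which we may assume nonempty (otherwise there is nothing to prove), and I write $f^*(x^*):=\sup_{x\in E}(\langle x,x^*\rangle-f(x))$ for the conjugate. The argument then splits into three stages: reduce to boundedness, reduce to attainment of a support functional on $C$, and finally extract that attainment from the hypothesis that $f^*$ is everywhere attained.

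First I would use coercivity to bound the level sets. Since $f(x)/\|x\|\to\infty$, there is $R>0$ with $f(x)>c$ whenever $\|x\|>R$, so $C\subset R\,B_E$ is norm-bounded; this is the only place where coercivity feeds the compactness conclusion directly, and it is exactly what permits James's theorem to be invoked. Next, because the closed convex hull $K:=\overline{\conv}(C)$ is bounded, closed and convex and has the same support function as $C$, James's theorem reduces the goal to showing that for every $x^*\in E^*$ the value $\sigma_C(x^*):=\sup_{x\in C}\langle x,x^*\rangle$ is attained at a point of $C\subset K$.

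To produce such a maximizer from the sole assumption that $f^*$ is everywhere attained, I would exploit the hypothesis along the ray $\lambda x^*$ ($\lambda>0$). For each $\lambda$ there is $x_\lambda$ with $\lambda\langle x_\lambda,x^*\rangle-f(x_\lambda)=f^*(\lambda x^*)$, so $x_\lambda$ maximizes $x\mapsto\lambda\langle x,x^*\rangle-f(x)$ (and $f(x_\lambda)<\infty$ since the attained value is finite). Comparing $x_\lambda$ against any competitor $x\in C$ yields
\[
  \langle x_\lambda,x^*\rangle\geq\langle x,x^*\rangle+\frac{f(x_\lambda)-c}{\lambda},
\]
and the aim is to let $\lambda\to\infty$, together with a coercivity estimate keeping $f(x_\lambda)$ in the relevant range, so as to land on a point of $C$ at which $\langle\cdot,x^*\rangle$ reaches $\sigma_C(x^*)$.

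The hard part will be precisely this last extraction, and I expect the naive limiting argument above to be insufficient, exactly because $f$ is \emph{not assumed convex}: one cannot appeal to Fenchel--Lagrange duality to identify $\sigma_C$ with a dual value, and the maximizers $x_\lambda$ may drift or fail to converge. The robust route, and the one I would ultimately commit to, is to run a perturbed Simons inequality (the sup-limsup mechanism underlying modern proofs of James's theorem) directly on the bounded set $C$: the everywhere-attainment of $f^*$ plays the role of the boundary/attainment hypothesis feeding Simons' lemma, while coercivity guarantees that the relevant $w^*$-cluster points in the bidual $E^{**}$ remain inside the canonical image of $E$, yielding weak compactness of $K$ with no convexity of $f$. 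Checking that the conjugate-attainment hypothesis supplies exactly the input required by the Simons machinery, and uniformly enough over $E^*$, is the delicate point on which the whole argument turns.
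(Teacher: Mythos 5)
You should first be aware that the paper contains no proof of this statement to compare against: Theorem~\ref{thm:James} is quoted verbatim from Orihuela and Ruiz Gal\'an (their Theorem~2, the ``coercive James's weak compactness theorem'') and is used in the paper purely as a black box, in the proofs of Theorem~\ref{thm:JSTLinfty} and of the implication (3) $\Rightarrow$ (2) of Theorem~\ref{thm:MuCharact}. Judged on its own, your proposal is not a proof but a plan whose decisive step is missing. The two steps you do carry out are correct but carry almost no weight: coercivity trivially makes each level set $C=\{f\leq c\}$ norm-bounded, and the ``reduction'' to showing that every $x^*\in E^*$ attains its supremum on $\overline{\conv}(C)$ is, by the classical James theorem together with the Krein--\v{S}mulian theorem, literally \emph{equivalent} to the conclusion you want; so no progress has been made, and all of the difficulty sits in extracting attainment of the plain functionals from attainment of the perturbed functionals $x\mapsto\langle x,x^*\rangle-f(x)$. (Note also that you ask for attainment at a point of $C$ itself, which is a further unjustified strengthening: $f$ is not assumed lower semicontinuous, so $C$ need not be weakly closed, and the correct target is $\overline{\conv}(C)$.)

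The extraction step fails exactly where you fear, and your fallback does not repair it. Concretely, take $E$ Hilbert and $f(x)=\|x\|^2$: the maximizer of $\lambda\langle\cdot,x^*\rangle-f$ is $x_\lambda=(\lambda/2)x^*$, so $\|x_\lambda\|\to\infty$ and $\langle x_\lambda,x^*\rangle\to\infty$, bearing no relation to $\sigma_C(x^*)$; in general, coercivity plus the attainment identity bound $\|x_\lambda\|$ only by a quantity that grows with $\lambda$, and with $f$ neither convex nor lower semicontinuous nothing forces the $x_\lambda$ to cluster at all, let alone in $C$. Your final paragraph then simply names the missing ingredient --- ``a perturbed Simons inequality'' --- without stating or proving it; but an unbounded, perturbed Simons/Pryce-type sup--limsup inequality adapted to $f$, run along a sequence $(x_n^*)\subset E^*$ with coercivity controlling the conjugate values, \emph{is} the new technical content of the cited paper, so deferring to it leaves the entire theorem unproved. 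Moreover, the assertion that coercivity ``guarantees that the relevant $w^*$-cluster points in the bidual remain inside the canonical image of $E$'' begs the question: that containment is essentially the weak compactness one is trying to establish (via Goldstine/Eberlein--\v{S}mulian considerations), not a consequence of coercivity. The proposal therefore has a genuine gap at the only step that matters.
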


We shall apply this theorem with $E=L^1$. We first make a ``change of
variable''.  For $U\in M^{\hat\varphi}$ with $U\geq 1$ a.s., we set
\begin{equation}
  \label{eq:GX}
  g_U(Z) :=\varphi^*_0(Z/U) 
  =\sup_{\xi\in L^\infty}\left(\EB[\xi Z/U]-\hat\varphi(\xi)\right),\quad \forall Z\in L^1.
\end{equation}
Note that $\dom g_U\subset L^1_+$ since $\varphi_0$ is monotone (see
(\ref{eq:MonotoneZ})), and that 
\begin{equation}
  \label{eq:GXDom}
  \begin{split}
    \{Z\in L^1:\, g_U(Z)\leq c\}&=\{UZ':\,Z'\in L^1,\, \varphi^*_0(Z')\leq c\},\\
    \dom g_U&=U\dom\varphi^*_0=\{UZ:\, Z\in \dom\varphi^*_0\}.
  \end{split}
\end{equation}
(Remember that $XZ\in L^1$ for any $X\in M^{\hat\varphi}$ and $Z\in
\dom\varphi^*_0$ by (\ref{eq:SeminormEmbedding}).)

\begin{lemma}
  \label{lem:Coercive}
  Let $U\in M^{\hat\varphi}$ with $U\geq 1$. Then $g_U$ is coercive:
  \begin{equation}
    \label{eq:UICoercive}
    \lim_{\|Z\|_1\rightarrow\infty}g_U(Z)/\|Z\|_1=\infty.
  \end{equation}

\end{lemma}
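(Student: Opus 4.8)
The plan is to avoid the obvious route of transporting the coercivity of $\varphi^*_0$ (which was already established in the proof of Theorem~\ref{thm:JSTLinfty} through the test functions $\alpha\,\mathrm{sgn}(\cdot)$) across the substitution $Z\mapsto Z/U$. That route fails because dividing by $U\geq 1$ shrinks the $L^1$-norm, $\|Z/U\|_1\leq\|Z\|_1$, in an uncontrolled way, so a lower bound on $\varphi^*_0(Z/U)$ in terms of $\|Z/U\|_1$ carries no useful information about $\|Z\|_1$. Instead I would exploit directly the defining property of $U\in M^{\hat\varphi}$, namely $\hat\varphi(\alpha U)<\infty$ for every $\alpha>0$, combined with the Fenchel--Young inequality (\ref{eq:YoungHat1}) between $\hat\varphi$ and $\varphi^*_0$.

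First I would dispose of the trivial case. If $g_U(Z)=\varphi^*_0(Z/U)=+\infty$, the quotient $g_U(Z)/\|Z\|_1$ is $+\infty$, so it suffices to treat those $Z$ with $g_U(Z)<\infty$. For such $Z$ one has $Z/U\in\dom\varphi^*_0\subset L^1_+$ by (\ref{eq:MonotoneZ}), and since $U\geq 1>0$ this forces $Z\geq 0$, whence $\|Z\|_1=\EB[Z]$.

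The core of the argument is then a single application of (\ref{eq:YoungHat1}) with $X=U$ (so $|X|=U$) and with the dual variable $Z/U\in\dom\varphi^*_0$:
\[
  \alpha\,\EB[Z]=\EB[\alpha U\cdot(Z/U)]\leq \hat\varphi(\alpha U)+\varphi^*_0(Z/U)=\hat\varphi(\alpha U)+g_U(Z),\qquad\forall\,\alpha>0.
\]
Rearranging gives $g_U(Z)\geq \alpha\|Z\|_1-\hat\varphi(\alpha U)$, where crucially $\hat\varphi(\alpha U)<\infty$ precisely because $U\in M^{\hat\varphi}$. Dividing by $\|Z\|_1$ and letting $\|Z\|_1\to\infty$ then yields $\liminf_{\|Z\|_1\to\infty}g_U(Z)/\|Z\|_1\geq\alpha$; since $\alpha>0$ is arbitrary, (\ref{eq:UICoercive}) follows.

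The only genuine obstacle is the very first observation: recognizing that the correct estimate is \emph{not} the transported coercivity of $\varphi^*_0$ but the Young inequality evaluated at the pair $(U,\,Z/U)$, which converts $\EB[U\cdot(Z/U)]=\EB[Z]=\|Z\|_1$ into a linear-growth lower bound with a finite, $Z$-independent correction $\hat\varphi(\alpha U)$. Once this pairing is chosen, the remaining steps are routine.
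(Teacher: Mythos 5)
Your proof is correct and is essentially the paper's own argument: both reduce to the Young-type lower bound $g_U(Z)\geq\alpha\|Z\|_1-\hat\varphi(\alpha U)$ for $Z\in\dom g_U\subset L^1_+$, with coercivity then following because $\hat\varphi(\alpha U)<\infty$ for every $\alpha>0$ exactly since $U\in M^{\hat\varphi}$. The only (harmless) difference is that you obtain this bound in one stroke from (\ref{eq:YoungHat1}) applied to the pair $(U,\,Z/U)$, whereas the paper tests the supremum defining $g_U$ with the bounded truncations $\alpha U\mathds1_{\{U\leq n\}}$ and then removes the truncation by continuity from below (Lemma~\ref{lem:HatPhiMon1}).
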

\begin{proof}
  For any $n$ and $\alpha>0$ (constant), $\alpha U\mathds1_{\{U\leq
    n\}}\in L^\infty$, hence from the definition of $g_U$,
  \begin{align*}
    g_U(Z)&\geq \EB[\alpha U\mathds1_{\{U\leq
      n\}}(Z/U)]-\hat\varphi(\alpha U\mathds1_{\{U\leq n\}})
    = \alpha\|Z\mathds1_{\{U\leq
      n\}}\|_1-\hat\varphi(\alpha U\mathds1_{\{U\leq n\}})\\
    &\rightarrow \alpha\|Z\|_1-\hat\varphi(\alpha U),\,\forall Z\in L^1_+,
  \end{align*}
  while $g_U(Z)=\infty$ if $Z\in L^1\setminus L^1_+$. Here the last
  convergence follows from $0\leq \alpha U\mathds1_{\{U\leq
    n\}}\uparrow \alpha U$, so $\hat\varphi(\alpha U)=
  \lim_n\hat\varphi(\alpha U\mathds1_{\{ U\leq n\}})$ by
  Lemma~\ref{lem:HatPhiMon1}. Since $\hat\varphi(\alpha U)<\infty$ for
  any $\alpha>0$ by $U\in M^{\hat\varphi}$, this shows
  (\ref{eq:UICoercive}).
\end{proof}

\begin{proof}[Proof of Theorem~\ref{thm:MuCharact}: (3) $\Rightarrow$
  (2)]
  Suppose (3), namely, for some $\varepsilon>0$, the supremum \linebreak
  $\sup_{Z\in \dom\varphi^*_0}(\EB[(|X|\vee
  \varepsilon)YZ]-\varphi^*_0(Z))=\sup_{Z\in
    \dom\varphi^*_0}(\EB[(|X/\varepsilon|\vee 1)(\varepsilon
  Y)Z]-\varphi^*_0(Z))$ is attained for every $Y\in L^\infty$. Putting
  $U=|X/\varepsilon|\vee 1\in M^{\hat\varphi}$, this says that for any
  $Y\in L^\infty$, there exists $Z_{U,Y}\in \dom\varphi^*_0\subset
  L^1$ such that $\hat\varphi(YU) =\EB[YUZ_{U,Y}]-\varphi^*_0(Z_{U,Y})
  =\EB[Y(UZ_{U,Y})]-g_U(UZ_{U,Y})$. On the other hand, for any $Z'\in
  \dom g_U=U\dom\varphi^*_0$,
  \begin{align*}
    \EB[YZ']-g_U(Z')&=\EB\left[YU\frac{Z'}{U}\right]-\varphi^*_0\left(\frac{Z'}{U}\right)\leq
    \hat\varphi(YU).
  \end{align*}
  Thus $\sup_{Z\in L^1}(\EB[YZ]-g_U(Z))$ is attained for all $Y\in
  L^\infty$, hence Theorem~\ref{thm:James} shows that $\{Z'\in L^1:\,
  g_U(Z')\leq c\}=\{UZ:\, \varphi^*_0(Z)\leq c\}$ is relatively weakly
  compact ($\Leftrightarrow$ uniformly integrable) for each
  $c>0$. Since $|X|\leq \varepsilon U$, we deduce that $\{XZ:\,
  \varphi^*_0(Z)\leq c\}$ is uniformly integrable for each
  $c>0$.
\end{proof}

\section{Proof of Theorem~\ref{thm:JSTGeneral}}
\label{sec:JSTProof}

We use the notation of Theorem~\ref{thm:JSTGeneral}, namely,
$\psi:\Xs\rightarrow\RB$ is a finite monotone convex function with the
\emph{Fatou property} (\ref{eq:FatouX2}) on a solid space $\Xs\subset
L^0$ containing the constants, and we put
$\psi_\infty:=\psi|_{L^\infty}$, $\psi^*_\infty(Z)=\sup_{X\in
  L^\infty}(\EB[XZ]-\psi(X))=(\psi|_{L^\infty})^*(Z)$ and
\begin{equation*}
  \hat\psi(X)=\sup_{Z\in\dom\psi^*\infty}(\EB[XZ]-\psi_\infty^*(Z)),
\end{equation*}
on $\DC_0=\{X\in L^0:\, X^-Z\in L^1,\,\forall Z\in
\dom\psi^*_\infty\}$.  Remember that we do not \emph{a priori} assume
the Lebesgue property of $\psi_\infty$ on $L^\infty$ here, but it is
implied by any of conditions (1) - (4) of Theorem~\ref{thm:JSTGeneral}
as we shall see in the proof below.  Note also that we can \emph{and
  do in the sequel} assume that $\psi(0)=0$, replacing $\psi$ by
$\psi-\psi(0)$.

\begin{proof}[Proof of Theorem~\ref{thm:JSTGeneral}: (1) $\Rightarrow$
  (2)]
  If $\psi$ is finite and has the Lebesgue property on $\Xs$,
  $\psi_\infty$ is a finite monotone convex function with the Lebesgue
  property on $L^\infty$. Thus Theorem~\ref{thm:MainLebExt1} applies
  to $\varphi_0=\psi_\infty$ (hence $\hat\varphi=\hat\psi$) implying
  that $(\hat\psi,M^{\hat\psi}_u)$ is the maximum Lebesgue extension
  of $\psi_\infty$. On the other hand, $(\psi,\Xs)$ is another
  Lebesgue extension of $\psi_\infty$, hence we must have $\Xs\subset
  M^{\hat\varphi}_u$. Consequently, (2) follows from
  Theorem~\ref{thm:MuCharact} ((1) $\Rightarrow$ (2)).
\end{proof}
\begin{proof}[Proof of Theorem~\ref{thm:JSTGeneral}: (2) $\Rightarrow$
  (3)]
  Since $\psi$ is supposed to have the Fatou property on $\Xs$,
  $\psi_\infty=\psi|_{L^\infty}$ has the Fatou property on
  $L^\infty$. Then condition (2) of Theorem~\ref{thm:JSTGeneral}
  applied to $X=1$ implies through Theorem~\ref{thm:JSTLinfty} that
  $\psi_\infty$ has the Lebesgue property on $L^\infty$. On the other
  hand, since $\psi$ is finite on $\Xs$ and has the Fatou property
  ($\Leftrightarrow$ continuous from below), we see that
  $\hat\psi(\alpha |X|)=\lim_n\hat\psi(\alpha |X|\wedge
  n)=\lim_n\psi(\alpha |X|\wedge n)=\psi(\alpha|X|)<\infty$, hence
  $\Xs\subset M^{\hat\psi}$.  Consequently, for each $X\in \Xs$, the
  assumption of Theorem~\ref{thm:MuCharact} ((2) $\Rightarrow$
  (\ref{eq:MaxAttained1})) is satisfied with $\varphi_0=\psi_\infty$
  ($\Rightarrow$ $\hat\varphi=\hat\psi$), thus the supremum
  $\sup_{Z\in \dom\psi^*_\infty}(\EB[XZ]-\psi^*_\infty(Z))$ is
  attained.
\end{proof}

The implication (3) $\Rightarrow$ (1) is a little more subtle.  We
first note that condition (3) of Theorem~\ref{thm:JSTGeneral}
restricted to $L^\infty\subset \Xs$ again implies the Lebesgue
property of $\psi_\infty=\psi|_{L^\infty}$ on $L^\infty$. Thus
$\varphi_0=\psi_\infty$ satisfies our standing assumption
(Assumption~\ref{as:Standing}). Let $\QB\ll \PB$ be the probability
measure constructed in Lemma~\ref{lem:SensitivityMaximal1} with
$\varphi_0=\psi_\infty$, i.e., a measure such that $\QB(A)=0$ iff
$\psi_\infty(\alpha\mathds1_A)=0$ for all $\alpha>0$, and we use the
notation (adapted to $\varphi_0=\psi_\infty$, $\hat\varphi=\hat\psi$)
of Section~\ref{sec:ChMeas}: $\pi:L^0\rightarrow L^0(\QB)$ (the
order-continuous lattice homomorphism constructed in
Lemma~\ref{lem:Sensitive1}),
$M^{\hat\psi}_u(\QB)=\pi(M^{\hat\psi}_u)$,
$M^{\hat\psi}(\QB)=\pi(M^{\hat\psi})$ and $\hat\psi_\QB$ (defined by
(\ref{eq:PhiHatQ}) with $\hat\varphi=\hat\psi$). Then
$\Xs(\QB):=\pi(\Xs)$ is a solid subspace of $L^0(\QB)$.

\begin{lemma}
  \label{lem:JSTsensitive2}
  With the notation above and the condition (3) of
  Theorem~\ref{thm:JSTGeneral},
  \begin{equation}
    \label{eq:JSTSensitiveFatou}
    X,Y\in \Xs,\,X=Y,\,\QB\text{-a.s. }\Rightarrow\, \psi(X)=\psi(Y).
  \end{equation}
  In particular,
  \begin{equation}
    \label{eq:PsiQ}
    \psi_\QB(\xi):=\psi(X),\, \xi=\pi(X)\in \Xs(\QB)
  \end{equation}
  is well defined as a monotone convex function on $\Xs(\QB)$, and it
  has the $\QB$-Fatou property on $\Xs(\QB)$, and thus
  $\psi_\QB(\xi)=\hat\psi_\QB(\xi)$ for all $\xi\in\Xs_+(\QB)$.

\end{lemma}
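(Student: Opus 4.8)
The plan is to combine three ingredients: the insensitivity of $\hat\psi$ to $\QB$-null sets (built into the construction of $\QB$), the one-sided comparison $\psi\le\hat\psi$ on $\Xs$ (cheaply available from condition~(3)), and a convexity/dilution trick that lets one dispense with any continuity-from-above of $\psi$. First I would record that $\hat\psi$ does not see $\QB$-null sets: if $X=Y$ $\QB$-a.s.\ and $A:=\{X\neq Y\}$, then $\EB[\Zh\mathds1_A]=0$, so (\ref{eq:HalmosSavage1}) (with $\varphi_0=\psi_\infty$) gives $\EB[Z\mathds1_A]=0$ for every $Z\in\dom\psi_\infty^*$; since such $Z\ge0$, this forces $Z=0$ a.s.\ on $A$, whence $\EB[XZ]=\EB[YZ]$ and therefore $\hat\psi(X)=\hat\psi(Y)$. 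I would also recall the facts already at hand: $\psi=\hat\psi=\psi_\infty$ on $L^\infty$ and $\psi=\hat\psi$ on $\Xs_+$ (from continuity from below of $\psi$ and $\hat\psi$), and in particular $\Xs\subset M^{\hat\psi}$, so $\hat\psi$ is finite on $\Xs$.

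Next I would establish $\psi\le\hat\psi$ on all of $\Xs$. Condition~(3), together with the solidity of $\Xs$ (so that $(|X|\vee1)Y\in\Xs$ for every $Y\in L^\infty$), verifies hypothesis~(3) of Theorem~\ref{thm:MuCharact} for every $X\in\Xs$; hence $\Xs\subset M^{\hat\psi}_u$, and Theorem~\ref{thm:MainLebExt1} shows that $\hat\psi$ is Lebesgue on $\Xs$. Writing $X_k:=(X\vee(-k))\wedge k\in L^\infty$, so that $X_k\to X$ a.s.\ with $|X_k|\le|X|$, the Fatou property of $\psi$, the equality $\psi=\hat\psi$ on $L^\infty$, and the Lebesgue property of $\hat\psi$ give $\psi(X)\le\liminf_k\psi(X_k)=\lim_k\hat\psi(X_k)=\hat\psi(X)$.

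The heart of the argument is the claim that for $V\in\Xs$ and $P\in\Xs_+$ with $P=0$ $\QB$-a.s.\ one has $\psi(V+P)=\psi(V)$. Monotonicity gives $\psi(V+P)\ge\psi(V)$. For the reverse inequality I first treat bounded $Q\in L^\infty_+$ with $Q=0$ $\QB$-a.s.: for $\lambda\in(0,1)$,
\begin{equation*}
  \psi(V+Q)=\psi\Bigl((1-\lambda)V+\lambda\bigl(V+\tfrac1\lambda Q\bigr)\Bigr)
  \le(1-\lambda)\psi(V)+\lambda\,\psi\bigl(V+\tfrac1\lambda Q\bigr)
  \le(1-\lambda)\psi(V)+\lambda\,\hat\psi(V),
\end{equation*}
where the last step uses $\psi\le\hat\psi$ and $\hat\psi(V+\tfrac1\lambda Q)=\hat\psi(V)$ (insensitivity, as $\tfrac1\lambda Q=0$ $\QB$-a.s.); letting $\lambda\downarrow0$ yields $\psi(V+Q)\le\psi(V)$. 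Applying this with $Q=P\wedge n$ and using continuity from below ($V+P\wedge n\uparrow V+P$) gives $\psi(V+P)\le\psi(V)$, proving the claim. Then (\ref{eq:JSTSensitiveFatou}) follows: with $P:=|X-Y|\in\Xs_+$ vanishing $\QB$-a.s., the claim applied to $V=X\wedge Y$ gives $\psi(X\vee Y)=\psi((X\wedge Y)+P)=\psi(X\wedge Y)$, and the sandwich $X\wedge Y\le X,Y\le X\vee Y$ forces $\psi(X)=\psi(Y)$. I would flag this paragraph as the crux: the main obstacle is that $\psi$ need not satisfy Young's inequality against $\dom\psi_\infty^*$ off the positive cone, so one cannot simply assert $\psi=\hat\psi$ (that is condition~(4), not yet available); the dilution by $1/\lambda$ inside the convex combination controls the possibly large $\psi(V+\tfrac1\lambda Q)$ by the insensitive $\hat\psi(V)$, avoiding any appeal to a continuity-from-above (Lebesgue) property of $\psi$ itself.

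Finally, the ``in particular'' assertions are routine. Equation~(\ref{eq:JSTSensitiveFatou}) makes $\psi_\QB$ in (\ref{eq:PsiQ}) well defined, and it is monotone and convex because $\pi$ is a positive linear surjection; the equality $\psi_\QB=\hat\psi_\QB$ on $\Xs_+(\QB)$ is immediate from $\psi=\hat\psi$ on $\Xs_+$ by choosing positive representatives. For the $\QB$-Fatou property, given $\xi_n\to\xi$ $\QB$-a.s.\ with $|\xi_n|\le\eta\in\Xs_+(\QB)$ (so $\xi\in\Xs(\QB)$ by solidity), I would use (\ref{eq:PiConti1}) with $\Xs$-representatives to obtain $X_n,X,Y$ with $\pi(X_n)=\xi_n$, $\pi(X)=\xi$, $\pi(Y)=\eta$, $|X_n|\le|Y|$ and $X_n\to X$ $\PB$-a.s.; these lie in $\Xs$ since they are dominated by $\Xs$-representatives, so the $\PB$-Fatou property of $\psi$ gives $\psi_\QB(\xi)=\psi(X)\le\liminf_n\psi(X_n)=\liminf_n\psi_\QB(\xi_n)$.
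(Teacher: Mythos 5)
Your proof is correct, but its core is genuinely different from the paper's. For (\ref{eq:JSTSensitiveFatou}) the paper proves the intrinsic claim that $\psi(\alpha|X-Y|)=0$ for all $\alpha>0$ already forces $\psi(X)=\psi(Y)$, using only convexity, monotonicity and finiteness: from $X\leq Y+|X-Y|$,
\begin{equation*}
  \psi(X)\leq\frac1\alpha\psi(\alpha|X-Y|)
  +\frac{\alpha-1}{\alpha}\,\psi\Bigl(\frac{\alpha}{\alpha-1}Y\Bigr),
\end{equation*}
and since the finite convex map $\beta\mapsto\psi(\beta Y)$ is continuous at $\beta=1$, the second term tends to $\psi(Y)$ as $\alpha\uparrow\infty$; the hypothesis $\psi(\alpha|X-Y|)=0$ is then supplied by (\ref{eq:ProbQ}) (giving $\hat\psi(\alpha|X-Y|)=0$, hence $\psi(\alpha|X-Y|\wedge n)=\psi_\infty(\alpha|X-Y|\wedge n)=0$ for all $n$) together with the Fatou property applied to the truncations. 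Thus the paper's argument uses only finiteness and Fatou, so the lemma's core is independent of condition (3), which enters only to guarantee Assumption~\ref{as:Standing} for the setup (the construction of $\QB$ needs Fatou alone, cf.\ Remark~\ref{rem:SensitiveFatouEnough}). You instead first invoke Theorem~\ref{thm:MuCharact} ((3) $\Rightarrow$ (1)) and Theorem~\ref{thm:MainLebExt1} to get $\Xs\subset M^{\hat\psi}_u$ and $\psi\leq\hat\psi$ on all of $\Xs$ --- effectively pre-running the opening step of the paper's proof of (3) $\Rightarrow$ (1) --- and then run the dilution $V+Q=(1-\lambda)V+\lambda(V+\tfrac1\lambda Q)$ against the insensitivity of $\hat\psi$. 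This is logically sound: Theorems~\ref{thm:MuCharact} and~\ref{thm:MainLebExt1} do not depend on this lemma, so there is no circularity, and all the auxiliary steps you cite (integrability $XZ\in L^1$ from (\ref{eq:OrderContiDual1}), $\psi=\hat\psi$ on $\Xs_+$ from continuity from below, $\hat\psi(V)<\infty$ from $\Xs\subset M^{\hat\psi}$) check out. But it is heavier than necessary: in your key display you could instead bound $\psi(V+\tfrac1\lambda Q)\leq\psi(|V|+\tfrac1\lambda Q)=\hat\psi(|V|+\tfrac1\lambda Q)=\hat\psi(|V|)$ by monotonicity and the identity $\psi=\hat\psi$ on $\Xs_+$ alone, which eliminates the need for $\psi\leq\hat\psi$ off the positive cone and hence for condition (3) and the Lebesgue machinery altogether; with that change your dilution trick and the paper's dilation of $Y$ become close cousins, each paying for the diluted perturbation with a quantity that is insensitive (your $\hat\psi(|V|)$) or controlled by continuity along a ray (the paper's $\frac{\alpha-1}{\alpha}\psi(\frac{\alpha}{\alpha-1}Y)$). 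The ``in particular'' assertions match the paper --- well-definedness from (\ref{eq:JSTSensitiveFatou}), monotonicity/convexity through $\pi$, and the $\QB$-Fatou property via (\ref{eq:PiConti1}) with representatives dominated in $\Xs$ --- and your derivation of $\psi_\QB=\hat\psi_\QB$ on $\Xs_+(\QB)$ by choosing positive representatives is even slightly more direct than the paper's re-derivation through the $\QB$-Fatou property.
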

\begin{proof}
  We first claim that for any $X,Y\in\Xs$,
  \begin{equation}
    \label{eq:JSTLemmaSensitive1}
    \psi(\alpha|X-Y|)=0,\,\forall \alpha>0\,\Rightarrow \, \psi(X)=\psi(Y).
  \end{equation}
  To see this, we note that
  \begin{align*}
    \psi(X)-\psi(Y)\leq
    \frac1\alpha\psi(\alpha|X-Y|)+\frac{\alpha-1}\alpha\psi\left(\frac\alpha{\alpha-1}Y\right)-\psi(Y),\quad\forall
    \alpha>1.
  \end{align*}
  Since $\psi$ is finite, the finite convex function
  $\beta\mapsto\psi(\beta Y)$ is continuous on $\RB$, thus $f(\beta)=
  \psi(\beta Y)/\beta$ is continuous at $\beta=1$ with
  $f(1)=\psi(Y)$. Therefore, for any $\varepsilon>0$, there exists
  $\alpha_\varepsilon>1$ so that
  $\frac{\alpha_\varepsilon-1}{\alpha_\varepsilon}
  \psi\left(\frac{\alpha_\varepsilon}{\alpha_\varepsilon-1}Y\right)-\psi(Y)
  <\varepsilon$. Combining this with the assumption
  $\psi(\alpha|X-Y|)=0$ for all $\alpha$, we see that
  $\psi(X)-\psi(Y)<\varepsilon$ for all $\varepsilon>0$, hence
  $\psi(X)\geq\psi(Y)$. Changing the roles of $X$ and $Y$, we have
  also $\psi(X)\leq \psi(Y)$, and (\ref{eq:JSTLemmaSensitive1})
  follows.

  If $X=Y$, $\QB$-a.s., then by the construction of $\QB$ (with
  $\varphi_0=\psi_\infty$), we see that $\psi(\alpha|X-Y|\wedge
  n)=\psi_\infty(\alpha|X-Y|\wedge n)=0$ for all $n$, then the Fatou
  property of $\psi$ implies $\psi(\alpha|X-Y|)\leq
  \liminf_n\psi(\alpha|X-Y|\wedge n)=0$. Thus
  (\ref{eq:JSTSensitiveFatou}) follows from
  (\ref{eq:JSTLemmaSensitive1}).

  It is clear from (\ref{eq:JSTSensitiveFatou}) that $\psi_\QB$ of
  (\ref{eq:PsiQ}) is well-defined and finite on $\Xs(\QB)$. To see the
  $\QB$-Fatou property, suppose $|\xi_n|\leq |\eta|$ ($\forall n$) for
  some $\eta\in\Xs(\QB)$ and $\xi_n\rightarrow \xi$ $\QB$-a.s.. Then
  by (\ref{eq:PiConti1}), we can choose $X_n, X\in L^0$ and $Y\in\Xs$
  so that $\xi_n=\pi(X_n)$, $\xi=\pi(X)$, $\eta=\pi(Y)$ with
  $|X_n|\leq |Y|$ in $L^0$ (hence $X_n,X\in \Xs$ by the solidness) and
  that $X_n\rightarrow X$ $\PB$-a.s. Then the $\PB$-Fatou property of
  the original $\psi$ shows that $\psi_\QB(\xi)=\psi(X)\leq
  \liminf_n\psi(X_n)=\liminf_n\psi_n(\xi_n)$.  The final assertion
  follows since if $\xi\geq 0$, then $\QB$-Fatou property shows
  $\psi_\QB(\xi)=\lim_n\psi_\QB(\xi\wedge n)=\lim_n\hat\psi(\xi\wedge
  n)=\hat\psi_\QB(\xi)$.
\end{proof}

Consequently, we have $\psi=\psi_\QB\circ\pi$ and recall that
$\pi:L^0\rightarrow L^0(\QB)$ is order-continuous. Thus $\psi$ is
order-continuous on $\Xs$ as soon as $\psi_\QB$ is $\QB$-order
continuous on $\Xs(\QB)=\pi(\Xs)$ which is a solid subspace of
$M^{\hat\psi}_u(\QB)$. Then if $\Xs(\QB)$ was further norm-closed in
$M^{\hat\psi}_u(\QB)$, we could conclude that
$(\Xs(\QB),\|\cdot\|_{\hat\psi,\QB})$ is an order-continuous Banach
lattice on its own right, hence any finite monotone convex function on
it is order continuous.  But there is no guarantee that $\Xs(\QB)$ is
closed in $M^{\hat\varphi}_u(\QB)$, so we need a trick.

\begin{lemma}
  \label{lem:JST3}
  In addition to the assumption of Lemma~\ref{lem:JSTsensitive2}, we
  suppose that $\Xs\subset M^{\hat\psi}_u$. Then $\psi$ has the
  Lebesgue property on $\Xs$, hence a fortiori $\psi=\hat\psi|_\Xs$.

\end{lemma}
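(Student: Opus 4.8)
The plan is to carry everything over to the quotient $L^0(\QB)$ and to exploit the single fact that, even though $\Xs(\QB)=\pi(\Xs)$ need not be norm-closed in the order-continuous Banach lattice $M^{\hat\psi}_u(\QB)$, the function $\psi_\QB$ is dominated by the well-behaved $\hat\psi_\QB$, and this domination alone forces $\psi_\QB$ to be continuous. Concretely, recall from Lemma~\ref{lem:JSTsensitive2} that $\psi_\QB$ is a finite monotone convex function on $\Xs(\QB)$ with the $\QB$-Fatou property and that $\psi=\psi_\QB\circ\pi$, while Theorem~\ref{thm:MainLebExt1} and Theorem~\ref{thm:MphiUOrderContiBanach} provide that $\hat\psi_\QB$ is a finite monotone convex function on the order-continuous Banach lattice $M^{\hat\psi}_u(\QB)$ which is both norm-continuous (extended Namioka--Klee, \citep{MR2648595}) and Lebesgue.

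First I would show that $\psi_\QB\le\hat\psi_\QB$ on all of $\Xs(\QB)$. On bounded elements $\zeta\in L^\infty(\QB)\cap\Xs(\QB)$, choosing a bounded representative $Y\in L^\infty$ gives $\psi_\QB(\zeta)=\psi_\infty(Y)=\hat\psi(Y)=\hat\psi_\QB(\zeta)$, so $\psi_\QB$ and $\hat\psi_\QB$ agree on bounded elements. Applying the $\QB$-Fatou property of $\psi_\QB$ to the truncations $\xi\mathds1_{\{|\xi|\le n\}}$ (which satisfy $|\xi\mathds1_{\{|\xi|\le n\}}|\le|\xi|\in\Xs(\QB)$ and converge $\QB$-a.s.\ to $\xi$), together with the Lebesgue property of $\hat\psi_\QB$ and $\xi\in\Xs\subset M^{\hat\psi}_u$, yields
\begin{equation*}
  \psi_\QB(\xi)\le\liminf_n\psi_\QB(\xi\mathds1_{\{|\xi|\le n\}})
  =\liminf_n\hat\psi_\QB(\xi\mathds1_{\{|\xi|\le n\}})=\hat\psi_\QB(\xi),
  \qquad\forall\xi\in\Xs(\QB).
\end{equation*}

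The key step is then to deduce norm-continuity of $\psi_\QB$ without completeness of $\Xs(\QB)$. Since $\hat\psi_\QB$ is finite and $\|\cdot\|_{\hat\psi,\QB}$-continuous on $M^{\hat\psi}_u(\QB)$, it is bounded above on $\|\cdot\|_{\hat\psi,\QB}$-bounded sets, hence its restriction to the subspace $\Xs(\QB)$ is bounded above on bounded subsets. By the displayed inequality, $\psi_\QB$ is a finite convex function on the normed space $(\Xs(\QB),\|\cdot\|_{\hat\psi,\QB})$ that is locally bounded above; by the standard fact that a finite convex function locally bounded above is continuous, $\psi_\QB$ is $\|\cdot\|_{\hat\psi,\QB}$-continuous on $\Xs(\QB)$. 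Now if $\xi_n\to\xi$ $\QB$-a.s.\ with $|\xi_n|\le\eta\in\Xs(\QB)$, then $\xi_n-\xi\to0$ $\QB$-a.s.\ with $|\xi_n-\xi|\le2\eta\in M^{\hat\psi}_u(\QB)$, so order-continuity of the norm (\ref{eq:LatticeNormQLebesgue}) gives $\|\xi_n-\xi\|_{\hat\psi,\QB}\to0$, and norm-continuity yields $\psi_\QB(\xi_n)\to\psi_\QB(\xi)$; that is, $\psi_\QB$ is Lebesgue on $\Xs(\QB)$.

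Finally I would transfer back: since $\pi:L^0\to L^0(\QB)$ is order-continuous (Lemma~\ref{lem:Sensitive1}) and $\psi=\psi_\QB\circ\pi$, dominated a.s.\ convergence in $\Xs$ is sent to dominated $\QB$-a.s.\ convergence in $\Xs(\QB)$, so $\psi$ has the Lebesgue property on $\Xs$. The additional assertion $\psi=\hat\psi|_\Xs$ then follows exactly as in the proof of Theorem~\ref{thm:MainLebExt1}: for $X\in\Xs\subset M^{\hat\psi}_u$, $\psi(X)=\lim_n\psi(X\mathds1_{\{|X|\le n\}})=\lim_n\hat\psi(X\mathds1_{\{|X|\le n\}})=\hat\psi(X)$, using the Lebesgue property of $\psi$ and of $\hat\psi$. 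The main obstacle is precisely this domination-to-continuity step: the naive route of making $\Xs(\QB)$ a Banach lattice fails because it may fail to be closed, and the resolution is to observe that being dominated by the continuous $\hat\psi_\QB$ already forces $\psi_\QB$ to be continuous, after which order-continuity of the norm does the rest.
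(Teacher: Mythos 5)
Your overall strategy is sound and genuinely different from the paper's, but one intermediate claim is false as stated: a finite norm-continuous convex function on an infinite-dimensional Banach space need \emph{not} be bounded above on norm-bounded sets, and this failure persists even for finite monotone convex functions on order-continuous Banach lattices. For instance, on $\ell^2$ the function $f(x)=\sum_n c_n\left((x_n^+)^2-1/2\right)^+$ with $c_n\uparrow\infty$ is finite (only finitely many terms are nonzero at each point), monotone and convex, hence norm-continuous by the extended Namioka--Klee theorem, yet $f(e_n)=c_n/2\rightarrow\infty$ along the unit vectors. So your step ``continuous $\Rightarrow$ bounded above on bounded sets'' invokes a wrong principle. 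The repair, however, is immediate and stays entirely within your argument: continuity of $\hat\psi_\QB$ at each point $\xi$ of $M^{\hat\psi}_u(\QB)$ gives a ball $B(\xi,\delta_\xi)$ on which $\hat\psi_\QB$ is bounded above, and your domination $\psi_\QB\leq\hat\psi_\QB$ on $\Xs(\QB)$ (whose derivation via the $\QB$-Fatou property of $\psi_\QB$, the agreement of the two functions on bounded elements, and the Lebesgue property of $\hat\psi_\QB$ applied to the truncations $\xi\mathds1_{\{|\xi|\leq n\}}$ is correct) then bounds $\psi_\QB$ above on $B(\xi,\delta_\xi)\cap\Xs(\QB)$. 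That is exactly the local boundedness above you need, and the classical fact that a finite convex function on a normed space bounded above on a neighborhood of a point is continuous (no completeness required) delivers norm-continuity of $\psi_\QB$ on $\Xs(\QB)$. The rest of your proof---order-continuity of $\|\cdot\|_{\hat\psi,\QB}$ from Theorem~\ref{thm:MphiUOrderContiBanach} converting dominated $\QB$-a.s.\ convergence into norm convergence, the transfer back through $\pi$, and the identity $\psi=\hat\psi|_\Xs$ via truncations---is correct.

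With this one-line fix, your route is a legitimate alternative to the paper's. The paper confronts the possible non-closedness of $\Xs(\QB)$ quite differently: it fixes the dominating element $\eta$, passes to the principal band $B_\eta(\QB)$ generated by $\eta$ in $M^{\hat\psi}_u(\QB)$---which, being a band in an order-continuous Banach lattice, \emph{is} norm-closed and hence itself an order-continuous Banach lattice---and extends $\psi_\QB$ to $B_\eta(\QB)$ by the iterated truncation formula $\psi^\eta_\QB(\xi)=\lim_m\lim_n\psi_\QB((\xi\vee(-m|\eta|))\wedge n|\eta|)$, checking finiteness of this extension so that the extended Namioka--Klee theorem applies on the band; the Lebesgue property then follows because $\psi^\eta_\QB$ agrees with $\psi_\QB$ on the order interval $[-|\eta|,|\eta|]$. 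Your approach replaces the band construction and the ad hoc extension $\psi^\eta_\QB$ by the single observation that domination by the continuous $\hat\psi_\QB$ already forces continuity of $\psi_\QB$, which is shorter and yields a stronger intermediate conclusion (global norm-continuity of $\psi_\QB$ on all of $\Xs(\QB)$, rather than order-continuity band by band); what it costs is reliance on the local-boundedness criterion for convex continuity in place of a second application of Namioka--Klee, and that criterion must be quoted in its correct local form, as above.
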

\begin{proof}
  To see the Lebesgue property of $\psi$ on $\Xs$, it suffices to show
  that $\psi_\QB$ has the $\QB$-Lebesgue property on $\Xs(\QB)$, and
  for the latter, we have to show that for any $\eta\in\Xs(\QB)$,
  \begin{equation}
    \label{eq:JSTLemmaOrderConti2}
    |\xi_n|\leq |\eta|\,(\forall
    n),\,\xi_n\rightarrow \xi\in \Xs(\QB)\,\QB\text{-a.s. }
    \Rightarrow\, \psi_\QB(\xi)=\lim_n\psi_\QB(\xi_n).
  \end{equation}
  Thus in the sequel, we fix an $\eta=\pi(Y)\in \Xs(\QB)$, and note
  that $\Xs(\QB)$ is solid subspace of $M^{\hat\psi}_u(\QB)$ since
  $\Xs$ is a solid subspace of $M^{\hat\psi}_u$ and $\pi$ is an onto
  lattice homomorphism.

  \noindent\textbf{Step~1.} Define
  \begin{equation}
    \label{eq:PrincipalBand1}
    B_\eta(\QB) :=\{\zeta\in M^{\hat\psi}_u(\QB):\, |\zeta|\wedge n|\eta|\uparrow |\zeta|\}.
  \end{equation}
  This is the principal band generated by $\eta$ in
  $M^{\hat\psi}_u(\QB)$, i.e., it is the smallest \emph{order closed}
  solid subspace (band) of $M^{\hat\psi}_u(\QB)$ containing
  $\eta$. Consequently, $B_\eta(\QB)$ is \emph{norm closed}
  (\citep[][Theorem~8.43]{aliprantis_border06}) in the
  order-continuous Banach lattice
  $(M^{\hat\psi}_u(\QB),\|\cdot\|_{\hat\psi,\QB})$, so
  $(B_\eta(\QB),\|\cdot\|_{\hat\psi,\QB})$ is itself an
  order-continuous Banach lattice. Hence the extended Namioka-Klee
  theorem shows that any finite monotone convex function on
  $B_\eta(\QB)$ is order-continuous.

  \noindent\textbf{Step~2.} 
  Define
  \begin{equation}
    \label{eq:BandExtension}
    \psi^\eta_{\QB}(\xi):=\lim_m\lim_n\psi_\QB((\xi\vee (-m|\eta|)\wedge n|\eta|),\quad \xi\in B_\eta(\QB).
  \end{equation}
  Observe that $(\xi\vee (-m|\eta|))\wedge n|\eta|\in \Xs(\QB)$ for
  each $m,n$ since $\eta\in \Xs(\QB)$ and $\Xs(\QB)$ is solid, hence
  $\psi^\eta_\QB$ is well-defined at least as a
  $[-\infty,\infty]$-valued monotone function, and it is
  straightforward to deduce from the monotonicity and convexity of
  $\psi_\QB$ that $\psi^\eta_\QB$ is also monotone and
  convex. Moreover, $\psi^\eta_\QB$ is finite on $B_\eta(\QB)$. To see
  this, note first that for all $\xi\in B_\eta(\QB)\subset
  M^{\hat\psi}_u(\QB)$, Lemma~\ref{lem:JSTsensitive2} shows that
\begin{align*}
  \psi^\eta_{\QB}(|\xi|)=\lim_n\psi_\QB(|\xi|\wedge n|\eta|)
  =\lim_n\hat\psi_\QB(|\xi|\wedge n|\eta|)=\hat\psi_\QB(|\xi|)<\infty.
\end{align*}
On the other hand,
$\psi^\eta_{\QB}(-|\xi|)=\lim_n\psi_\QB(-(|\xi|\wedge n|\eta|))$ by
definition, and
\begin{align*}
  0&=2\psi_\QB(0)\leq \psi_\QB(|\xi|\wedge
  n|\eta|)+\psi_\QB(-(|\xi|\wedge n|\eta|))\\
  &\leq \hat\psi_\QB(|\xi|)+\psi_\QB(-(|\xi|\wedge n|\eta|)),\quad
  \forall n,
\end{align*}
hence $\psi^\eta_\QB(-|\xi|)=\inf_n\psi_\QB(-(|\xi|\wedge n|\eta|)\geq
-\hat\psi(-|\xi|)>-\infty$. Consequently, \textbf{Step~1} tells us
that $\psi^\eta_\QB$ is $\QB$-order continuous on $B_\eta(\QB)$ as a
finite monotone convex function on an order continuous Banach lattice.

\noindent\textbf{Step~3.} 
Though $B_\eta(\QB)$ may not contain the \emph{whole} $\Xs$, we see
that if $|\xi|\leq |\eta|$, then $\xi\in\Xs(\QB)\cap B_\eta(\QB)$ and
$(\xi\vee (-m|\eta|))\wedge n|\eta|=\xi$ for all $m,n$, hence
$\psi^\eta_\QB(\xi)=\psi_\QB(\xi)$. In particular, if $|\xi_n|\leq
|\eta|$ and $\xi_n\rightarrow \xi$, $\QB$-a.s., we have
$\psi_\QB(\xi_n)=\psi^\eta_\QB(\xi_n)
\rightarrow\psi^\eta_\QB(\xi)=\psi_\QB(\xi)$ by \textbf{Step~2}, and
we have (\ref{eq:JSTLemmaOrderConti2}).
\end{proof}

\begin{proof}[Proof of Theorem~\ref{thm:JSTGeneral}: (3) $\Rightarrow$
  (1) and (4)]
  Remember that (3) restricted to $L^\infty$ implies that
  $\psi_\infty=\psi|_{L^\infty}$ has the Lebesgue property on
  $L^\infty$.  Also, since $\Xs$ is supposed to be solid, we have
  $(|X|\vee 1)Y\in \Xs$ for all $X\in\Xs$ and $Y\in L^\infty$, hence
  the condition (3) of Theorem~\ref{thm:JSTGeneral} already implies
  that the supremum $\sup_{Z\in \dom\psi^*_\infty}(\EB[(|X|\vee
  1)YZ]-\psi^*_\infty(Z))$ is attained for any $X\in \Xs$, $Y\in
  L^\infty$ and $Z\in \dom\psi^*_\infty$. Hence we see from
  Theorem~\ref{thm:MuCharact} ((3) $\Rightarrow$ (1)) that $\Xs\subset
  M^{\hat\psi}_u$. Thus by Lemma~\ref{lem:JST3}, $\psi$ has the
  Lebesgue property on $\Xs$ (thus (1)), and
  Theorem~\ref{thm:MainLebExt1} shows that
  $\psi(X)=\hat\psi(X)=\sup_{Z\in
    \dom\psi^*_\infty}(\EB[XZ]-\psi^*_\infty(Z))$, hence we have (4)
  since the supremum is supposed to be attained.
\end{proof}

\section{Convex Risk Measures}
\label{sec:ConvRiskFunc}

Here we consider convex risk measures as our motivating class of
monotone convex functions.  In mathematical finance, a convex risk
measure on a solid space $\Xs$ is a proper convex function $\rho$
which is monotone \emph{decreasing} in the a.s. order and satisfies
the \emph{cash-invariance}: $\rho(X+c)=\rho(X)-c$ if $X\in \Xs$ and
$c\in\RB$. Making a change of sign, we call a proper monotone
(increasing) convex function $\varphi$ on $\Xs$ a \emph{convex risk
  function} if
\begin{equation}
  \label{eq:CashAdd}
  \varphi(X+c)=\varphi(X)+c,\,\forall X\in \Xs,\,c\in\RB.
\end{equation}
The relation between the two notions is obvious; if $\varphi$ is a
convex risk function, then $\rho(X)=\varphi(-X)$ is a convex risk
measure, and also $-\varphi(-X)$ is called a \emph{concave monetary
  utility function}. Though it is just a matter of notation, we prefer
monotone \emph{increasing} and \emph{convex} functions which fit to
our and standard notation of convex analysis, and it is also less
confusing. Also, a convex risk function $\varphi$ is called
\emph{coherent} if it is positively homogeneous: $\varphi(\alpha
X)=\alpha\varphi(X)$ if $\alpha\geq 0$. We refer the reader to
\citep[][Ch.~4]{MR2779313} for a comprehensive account.

When $\Xs=L^\infty$, condition (\ref{eq:CashAdd}) for a monotone
convex function $\varphi$ is equivalent to 
\begin{equation}
  \label{eq:PenaltyDomRiskFunc}
  \varphi^*(Z)<\infty\,\Rightarrow\, Z\geq 0\text{ and }\EB[Z]=1,
\end{equation}
i.e., $\varphi^*(Z)$ is finite only if $Z$ is a Radon-Nikodým density
of a probability measure, say $Q$, absolutely continuous w.r.t. $\PB$.
Adopting the usual convention of identifying a probability measure
$Q\ll\PB$ with its density $dQ/d\PB$, the representation
(\ref{eq:SigmaAddDual1}) is written as
\begin{equation}
  \label{eq:RobRepLinfty1}
  \varphi(X)=\sup_{Q\in\QC_\varphi}(\EB_Q[X]-\varphi^*(Q)),
  \, X\in L^\infty,
\end{equation}
where $\QC_{\varphi^*}:=\{Q\ll\PB:\, \text{probability, }dQ/d\PB\in
\dom\varphi^*\}$. Another consequence of cash-invariance
(\ref{eq:CashAdd}) is that it implies $\varphi$ is finite on
$L^\infty$, since then $-\|X\|_\infty=\varphi(-\|X\|_\infty)\leq
\varphi(X)\leq \varphi(\|X\|_\infty)=\|X\|_\infty$ for all $X\in
L^\infty$ by the monotonicity and (\ref{eq:CashAdd}).  Thus all of our
main results apply to any Lebesgue convex risk functions on
$L^\infty$. Note also that any Lebesgue extension of a convex risk
function $\varphi_0$ on $L^\infty$ retains the cash-invariance
(\ref{eq:CashAdd}) since if $(\varphi,\Xs)$ is a Lebesgue extension of
such $\varphi_0$,
\begin{align*}
  \varphi(X+c)&=\lim_n\varphi(X\mathds1_{\{|X|\leq n\}}+c)
  =\lim_n\varphi_0(X\mathds1_{\{|X|\leq n\}}+c)\\
  & =\lim_n\varphi_0(X\mathds1_{\{|X|\leq n\}})+c
  =\lim_n\varphi(X\mathds1_{\{|X|\leq n\}})+c
\end{align*}
Consequently we have the following as a paraphrasing of
Theorem~\ref{thm:MainLebExt1}:
\begin{corollary}
  \label{cor:ConvRiskFunc}
  Let $\varphi_0$ be a convex risk function on $L^\infty$ with the
  Lebesgue property and $\varphi_0(0)=0$, $\varphi^*_0$ its conjugate,
  $\QC_0:=\QC_{\varphi_0}$ and $\DC_0:=\{X\in L^0:\, X^-\in
  L^1(Q),\,\forall Q\in \QC_0\}$. Then we have:
  \begin{enumerate}
  \item The following are well-defined
    \begin{align}
      \label{eq:ConvRiskFuncHat}
      \hat\varphi(X)&=\sup_{Q\in \QC_0}(\EB_Q[X]-\varphi_0^*(Q)),\,X\in \DC_0\\
      M^{\hat\varphi}_u&=\{X\in L^0:\,
      \lim_N\hat\varphi(\alpha|X|\mathds1_{\{|X|>N\}})=0,\,\forall
      \alpha>0\} \subset \DC_0\cap (-\DC_0).
    \end{align}
  \item $\hat\varphi$ is a finite convex risk function on
    $M^{\hat\varphi}_u$ with the Lebesgue property and
    $\hat\varphi|_{L^\infty}=\varphi_0$, and for any other pair
    $(\varphi,\Xs)$ of a solid space $\Xs\subset L^0$ and a convex
    risk function on $\Xs$ with the Lebesgue property and
    $\varphi|_{L^\infty}=\varphi_0$, we have $\Xs\subset
    M^{\hat\varphi}_u$ and $\varphi=\hat\varphi|_{\Xs}$.
  \end{enumerate}
\end{corollary}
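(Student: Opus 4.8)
The plan is to deduce the corollary directly from Theorem~\ref{thm:MainLebExt1} by translating cash-invariance into the dual language. First I would check that Assumption~\ref{as:Standing} is in force: a convex risk function $\varphi_0$ is monotone and convex by hypothesis, satisfies $\varphi_0(0)=0$ and the Lebesgue property, and is automatically \emph{finite} on $L^\infty$, since monotonicity together with (\ref{eq:CashAdd}) gives $-\|X\|_\infty=\varphi_0(-\|X\|_\infty)\leq\varphi_0(X)\leq\varphi_0(\|X\|_\infty)=\|X\|_\infty$ for every $X\in L^\infty$. Hence $\varphi_0$ meets all the standing hypotheses and Theorem~\ref{thm:MainLebExt1} applies verbatim.

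Next I would identify the objects appearing in the corollary with those of the main text. Cash-invariance (\ref{eq:CashAdd}) is equivalent to the dual condition (\ref{eq:PenaltyDomRiskFunc}), so $\dom\varphi_0^*$ consists precisely of the densities $dQ/d\PB$ of probability measures $Q\in\QC_0$. Under this identification $\EB[XZ]=\EB_Q[X]$ and $X^-Z\in L^1\Leftrightarrow X^-\in L^1(Q)$, whence the cone $\DC_0$ of (\ref{eq:DX}), the function $\hat\varphi$ of (\ref{eq:phihat1}), and the space $M^{\hat\varphi}_u$ of (\ref{eq:Mphiu1}) coincide with the expressions in (\ref{eq:ConvRiskFuncHat}) and the displayed formula for $M^{\hat\varphi}_u$ in the corollary. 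This yields item~(1).

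It remains to upgrade item~(2) of Theorem~\ref{thm:MainLebExt1} by recording that cash-invariance (\ref{eq:CashAdd}) is \emph{inherited} by every Lebesgue extension, in particular by the maximal one $\hat\varphi$. This is precisely the computation in the paragraph preceding the corollary: for a Lebesgue extension $(\varphi,\Xs)$ and any $X\in\Xs$, $c\in\RB$, one truncates as $X\mathds1_{\{|X|\leq n\}}+c$, which is dominated by $|X|+|c|\in\Xs$ and converges a.s.\ to $X+c$, so the Lebesgue property, $\varphi|_{L^\infty}=\varphi_0$, and the cash-invariance of $\varphi_0$ combine to give $\varphi(X+c)=\varphi(X)+c$. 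Applying this to $\hat\varphi$ on $M^{\hat\varphi}_u$ and to any competitor $(\varphi,\Xs)$ shows both are convex risk functions, while the inclusion $\Xs\subset M^{\hat\varphi}_u$ and the identity $\varphi=\hat\varphi|_\Xs$ come straight from Theorem~\ref{thm:MainLebExt1}. I expect no genuine obstacle here; the only point needing (trivial) care is the domination $|X\mathds1_{\{|X|\leq n\}}+c|\leq|X|+|c|$, which places the truncated-and-shifted sequence within the scope of the Lebesgue property.
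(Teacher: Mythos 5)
Your proposal is correct and matches the paper's own treatment: the paper presents this corollary as a direct paraphrase of Theorem~\ref{thm:MainLebExt1}, having already recorded in the immediately preceding paragraphs the same three ingredients you use --- finiteness of $\varphi_0$ from monotonicity plus cash-invariance (\ref{eq:CashAdd}), the dual identification of $\dom\varphi_0^*$ with densities of measures in $\QC_0$ via (\ref{eq:PenaltyDomRiskFunc}), and the inheritance of cash-invariance by any Lebesgue extension through the truncation computation $\varphi(X+c)=\lim_n\varphi(X\mathds1_{\{|X|\leq n\}}+c)=\lim_n\varphi(X\mathds1_{\{|X|\leq n\}})+c=\varphi(X)+c$. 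Your only addition, making explicit the domination $|X\mathds1_{\{|X|\leq n\}}+c|\leq|X|+|c|$ that justifies invoking the Lebesgue property, is a harmless (and correct) piece of care left implicit in the paper.
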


Note that the assumption $\varphi_0(0)=0$ is just for notational
simplicity; without this assumption,
$(\hat\varphi,M^{\hat\varphi-\varphi_0(0)}_u)$ is the maximum Lebesgue
extension of $\varphi_0$.

Here we examine some typical risk functions deriving the explicit
forms of the space $M^{\hat\varphi}_u$.  We begin with a simple
remark. Though we defined $\hat\varphi$ using the dual representation
of $\varphi_0$ on $L^\infty$, it may be more convenient to use other
more explicit formula for $\varphi_0$ if available. By
Lemma~\ref{lem:HatPhiMon1}, we know that $\hat\varphi$ is continuous
from below on $\DC_0\supset L^0_+$. In particular,
\begin{equation}
  \label{eq:ExampleFormulaExplicitGeneral}
  \hat\varphi(X)=\lim_n\varphi_0(X \wedge n),\quad \forall X\in L^0_+.
\end{equation}
Note that this formula may not be true for $X\in \DC_0\setminus
L^0_+$, but we need only consider $|X|$ with $X\in L^0$ to derive the
spaces $M^{\hat\varphi}_u$ and $M^{\hat\varphi}$.

\begin{example}[Entropic Risk Function]
  \label{ex:entropy1}
  Let
  \begin{equation}
    \label{eq:Entropic1}
    \varphi_{\mathrm{ent}}(X):=\log \EB[\exp(X)],\quad X\in L^\infty.    
  \end{equation}
  This is called the \emph{entropic risk function}. It is
  straightforward from the dominated convergence theorem that
  $\varphi_{\mathrm{ent}}$ has the Lebesgue property on
  $L^\infty$. Its conjugate $\varphi_{\mathrm{ent}}^*$ is given as
  $\varphi^*_{\mathrm{ent}}(Q)=\HC(Q|\PB):=\EB[(dQ/d\PB)\log(dQ/d\PB)]$,
  the \emph{relative entropy} (thus \emph{entropic}), hence we have
  \begin{align*}
    \hat \varphi_{\mathrm{ent}}(X)=\sup_{Q\ll\PB,
      \HC(Q|\PB)<\infty}(\EB_Q[X]-\HC(Q|\PB)),
  \end{align*}
  and the identity $\hat\varphi_{\mathrm{ent}}(X)=\log\EB[\exp(X)]$
  remains true for all $X\in L^0_+$.  In particular,
  $M^{\hat\varphi_{\mathrm{ent}}}=M^{\Phi_{\exp}} \subsetneq
  L^{\Phi_{\exp}}=L^{\hat\varphi_{\mathrm{ent}}}$ if
  $(\Omega,\FC,\PB)$ is atomless, where $\Phi_{{\exp}}(x)=e^x-1$
  ($x\geq 0$) and $M^{\Phi_{\exp}}$ (resp. $L^{\Phi_{\exp}}$) is the
  associated Orlicz heart (resp. space). Further, we see that
  $M^{\hat\varphi_{\mathrm{ent}}}_u
  =M^{\hat\varphi_{\mathrm{ent}}}(=M^{\Phi_{\exp}})$, since if $X\in
  M^{\Phi_{\exp}}$,
  $\exp(\varphi_{\mathrm{ent}}(\lambda|X|\mathds1_{\{|X|>N\}}))=\EB[\exp(\lambda
  |X|\mathds1_{\{|X|>N\}})]=\EB[\exp(\lambda|X|)\mathds1_{\{|X|>N\}}]+\PB(|X|\leq
  N)\rightarrow 1$ by the dominated convergence for every $\lambda>0$.

\end{example}

\subsection{Utility Based Shortfall Risk}
\label{sec:Shortfall}

Let $l:\RB\rightarrow\RB$ be a (finite) increasing convex function
with $l(0)>\inf_xl(x)$ (thus not identically constant). Then its
conjugate $l^*(y)=\sup_x(xy-l(x))$ is a convex function with
\begin{equation}
  \label{eq:LConj1}
  \mathrm{dom}l^*\subset \RB_+\quad\text{and}\quad \lim_{y\uparrow \infty}\frac{l^*(y)}y=+\infty.
\end{equation}
The second one shows also that for any $c>0$, there exist
$\underline{\Lambda}(c), \overline\Lambda(c)\in(0,\infty)$ such that
\begin{equation}
  \label{eq:LambdaBounds}
  \frac{l(0)+l^*(y)}{y}\leq c+1\,\Rightarrow\, y\in [\underline\Lambda(c),\overline\Lambda(c)]
\end{equation}
Indeed, if $I_c:=\{y>0:\, (l(0)+l^*(y))/y\leq c+1\}$ is empty, put
$\underline\Lambda(c)=\overline\Lambda(c)=1$. Otherwise,
$\overline\Lambda(c):=\sup I_c$ is finite by (\ref{eq:LConj1}), and
picking $x_0<0$ with $l(x_0)<l(0)$ (by assumption),
\begin{align*}
  \frac{l(0)+l^*(y)}y=\sup_x\left(x+\frac{l(0)-l(x)}y\right)\geq
  x_0+\frac{l(0)-l(x_0)}y,
\end{align*}
hence $\underline\Lambda(c)=\frac{l(0)-l(x_0)}{c+1-x_0}>0$ does the
job.

Now we define the associated \emph{shortfall risk function} by
\begin{equation}
  \label{eq:ShortfallRisk1}
  \varphi_l(X):=\inf\{x\in\RB:\, \EB[l(X-x)]\leq l(0)\},\quad \forall X\in L^\infty.
\end{equation}
This is a convex risk function with the Lebesgue property
(\ref{eq:LebLinfty1}) and its conjugate is
\begin{equation}
  \label{eq:PenaltyShortfall}
  \varphi_l^*(Q):=\varphi^*_l(dQ/d\PB)
  =\inf_{\lambda>0}\frac1\lambda \left(l(0)+\EB\left[l^*\left(\lambda\frac{dQ}{d\PB}\right)\right]\right).
\end{equation}
(See \citep[Ch.4]{MR2779313}).  Also,
(\ref{eq:ExampleFormulaExplicitGeneral}) implies that
\begin{align*}
  \hat\varphi_l(|X|)&=\sup_n\inf\{x:\, \EB[l(|X|\wedge n-x)]\leq
  l(0)\} \leq \inf\{x:\, \EB[l(|X|-x)]\leq l(0)\},
\end{align*}
while if $\hat\varphi_l(|X|)<\infty$, then
$\EB[l(|X|-\hat\varphi_l(|X|))] \leq \lim_n\EB[l(|X|\wedge
n-\hat\varphi_l(|X|))] \leq \limsup_n\EB[l(|X|\wedge
n-\varphi_l(|X|\wedge n))] \leq l(0)$ by monotone convergence and
$\varphi_l(|X|\wedge n)\leq \hat\varphi_l(|X|)$, thus
\begin{align}\label{eq:Shortfall2} \hat\varphi_l(|X|)=\inf\{x:\,
  \EB[l(|X|-x)]\leq l(0)\}, X\in L^0.
\end{align}

In this case, two spaces $M^{\hat\varphi_l}_u$ and $M^{\hat\varphi_l}$
coincide and equal to the Orlicz heart associated to the Young
function $\Phi_l(x):=l(|x|)-l(0)$, i.e.,
\begin{proposition}
  \label{prop:ExShortfall1}
  $M^{\hat\varphi_l}_u=M^{\hat\varphi_l}=M^{\Phi_l}$.
\end{proposition}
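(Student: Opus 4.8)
The plan is to establish the chain of inclusions
$$M^{\Phi_l}\subset M^{\hat\varphi_l}_u\subset M^{\hat\varphi_l}\subset M^{\Phi_l},$$
of which the middle one is immediate from the definitions. Throughout I would work with the explicit formula (\ref{eq:Shortfall2}), which for any $Y\in L^0$ and $\alpha>0$ reads $\hat\varphi_l(\alpha|Y|)=\inf\{x\in\RB:\,\EB[l(\alpha|Y|-x)]\leq l(0)\}$, so that all three spaces are described purely through integrability of the compositions $l(\alpha|Y|-x)$. The one structural fact about the loss function I will need is an elementary consequence of convexity: since $l$ is convex, increasing and $l(0)>\inf_{t\in\RB}l(t)$, one has $l(t)<l(0)$ for every $t<0$ and $l(t)>l(0)$ for every $t>0$. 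Indeed, $\{l=l(0)\}$ is an interval (a convex function attaining $l(0)$ at two points is constant in between, by the chord inequality and monotonicity); were it to contain a point $t_0\neq0$, then $l$ would be constant on the segment joining $0$ and $t_0$, and convexity together with monotonicity would force $l$ constant on the whole half-line to the left, giving $\inf_{t\in\RB}l(t)=l(0)$, a contradiction. Hence $\{l=l(0)\}=\{0\}$ and monotonicity yields the two strict inequalities; in particular $\varphi_l(0)=\hat\varphi_l(0)=0$.

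For $M^{\hat\varphi_l}\subset M^{\Phi_l}$ I would use a growth comparison. Fix $X\in M^{\hat\varphi_l}$ and $\beta>0$; applying the definition with $\alpha=2\beta$ gives some $x^*\in\RB$ with $\EB[l(2\beta|X|-x^*)]\leq l(0)<\infty$, and since $l(2\beta|X|-x^*)\geq\inf_{t\in\RB}l(t)>-\infty$ this composition lies in $L^1$. On $\{|X|\geq x^*/\beta\}$ one has $\beta|X|\leq 2\beta|X|-x^*$, hence $l(\beta|X|)\leq l(2\beta|X|-x^*)$ there by monotonicity, while on the complement $|X|$ is bounded and $l(\beta|X|)\leq l(x^*\vee0)<\infty$. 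Integrating gives $\EB[\Phi_l(\beta|X|)]=\EB[l(\beta|X|)]-l(0)<\infty$, and since $\beta>0$ is arbitrary, $X\in M^{\Phi_l}$.

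The heart of the matter, and the step I expect to be the main obstacle, is $M^{\Phi_l}\subset M^{\hat\varphi_l}_u$: for $X\in M^{\Phi_l}$ and $\alpha>0$ I must show $\lim_N\hat\varphi_l(X_N)=0$, where $X_N:=\alpha|X|\mathds1_{\{|X|>N\}}$. Here $X_N\downarrow0$ a.s.\ and $0\leq X_N\leq\alpha|X|$ with $l(\alpha|X|)\in L^1$, so for any fixed $x\geq0$ the dominated convergence theorem gives $\EB[l(X_N-x)]\to l(-x)$, the integrands being squeezed between the constant $\inf_{t\in\RB}l(t)$ and $l(\alpha|X|)$. Now the structural lemma does the work: for $x>0$ we have $l(-x)<l(0)$, so $\EB[l(X_N-x)]\leq l(0)$ for all large $N$, whence $\hat\varphi_l(X_N)\leq x$ eventually and $\limsup_N\hat\varphi_l(X_N)\leq x$; letting $x\downarrow0$ gives $\limsup_N\hat\varphi_l(X_N)\leq0$. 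For the matching lower bound, for $x<0$ one has $X_N-x\geq|x|>0$, so $l(X_N-x)\geq l(|x|)>l(0)$ pointwise and thus $\EB[l(X_N-x)]>l(0)$; hence no $x<0$ lies in the defining set, and $\hat\varphi_l(X_N)\geq0$. Combining the two bounds, $\hat\varphi_l(X_N)\to0$, so $X\in M^{\hat\varphi_l}_u$ and the chain closes.

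The subtle point—and the reason the hypothesis $l(0)>\inf_{t\in\RB}l(t)$ is indispensable—is precisely the strict inequality $l(-x)<l(0)$: without it $\hat\varphi_l(X_N)$ could stabilize at a strictly positive value although $X_N\downarrow0$, and $M^{\hat\varphi_l}_u$ would then be strictly smaller than $M^{\hat\varphi_l}$. The two dominated-convergence passages and the growth comparison are all routine once the elementary convexity lemma is in hand, so I would isolate that lemma first and treat it as the crux of the proof.
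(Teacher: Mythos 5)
Your proposal is correct in substance, but it takes a genuinely different route from the paper's for the hard inclusion $M^{\Phi_l}\subset M^{\hat\varphi_l}_u$. The paper argues \emph{dually}: it invokes the uniform-integrability characterization of $M^{\hat\varphi}_u$ (Theorem~\ref{thm:MuCharact}, (2) $\Rightarrow$ (1)) and verifies that $\{X\,dQ/d\PB:\,\varphi^*_l(Q)\leq c\}$ is uniformly integrable, using the explicit conjugate (\ref{eq:PenaltyShortfall}), Jensen's inequality to confine the minimizing $\lambda_Q$ to $[\underline\Lambda(c),\overline\Lambda(c)]$, Young's inequality for the pair $(\Phi_l,l^*)$, and a diagonal argument. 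You argue \emph{primally}: from the formula (\ref{eq:Shortfall2}) and dominated convergence you show $\hat\varphi_l(\alpha|X|\mathds1_{\{|X|>N\}})\rightarrow 0$ directly, the crux being the strict separation $l(-x)<l(0)<l(x)$ for $x>0$, which you correctly derive from convexity, monotonicity and $l(0)>\inf_t l(t)$ (your structural lemma and its proof are sound). Your route is more elementary and self-contained---no dual machinery at all---whereas the paper's route exhibits the dual condition (2) of Theorem~\ref{thm:MuCharact} explicitly (hence also the attainment (\ref{eq:MaxAttained1}) along the way) and serves verbatim as the template for the robust case (Proposition~\ref{prop:RobShortfall}), where the supremum over $\PC$ makes a pointwise dominated-convergence argument less direct. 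For the easy inclusion $M^{\hat\varphi_l}\subset M^{\Phi_l}$ the paper uses the convexity estimate $l(\alpha|X|/2)\leq\frac12 l(\alpha|X|-x)+\frac12 l(x)$, while you split the domain at $\{|X|\geq x^*/\beta\}$ and use monotonicity; both are routine and correct. Your treatment of the middle inclusion as ``immediate'' is acceptable, since $M^{\hat\varphi}_u\subset M^{\hat\varphi}$ is established in general in the paper via (\ref{eq:HatPhiFinite1}).

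One small repair is needed: twice you assert $\inf_{t\in\RB}l(t)>-\infty$---as the lower half of your dominated-convergence squeeze, and to conclude $l(2\beta|X|-x^*)\in L^1$. This does not follow from the standing hypotheses: $l(x)=x$ is a finite increasing convex function with $l(0)>\inf_x l(x)=-\infty$ that satisfies (\ref{eq:LConj1}). The fix is immediate in both places, because the arguments you feed into $l$ are bounded below by a constant: $X_N-x\geq -x$ gives the finite lower bound $l(-x)$, and $2\beta|X|-x^*\geq -x^*$ gives $l(-x^*)$; with these finite constants replacing $\inf_t l(t)$, your dominations are by integrable functions and the proof goes through unchanged.
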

\begin{proof}
  To see $M^{\Phi_l}\subset M^{\hat\varphi_l}_u$, it suffices that
  $\{XdQ/d\PB:\, \varphi^*_l(Q)\leq c\}$ is uniformly integrable for
  any $c>0$ and $X\in M^{\Phi_l}$ by Theorem~\ref{thm:MuCharact} . So
  let us fix $c>0$ and $X\in M^{\Phi_l}$. Observe that if
  $\varphi^*_l(Q)\leq c$, then there exists a $\lambda_Q>0$ such that
  \begin{equation}
    \label{eq:ProofShortfall1}
    c+1 \geq \frac{1}{\lambda_Q} 
    \left(l(0) +\EB\left[l^*\left(\lambda_Q\frac{dQ}{d\PB}\right)\right]\right) 
    \geq \frac{l(0)+l^*(\lambda_Q)}{\lambda_Q}
  \end{equation}
  by (\ref{eq:PenaltyShortfall}) and Jensen's inequality, and then
  $\lambda_Q\in [\underline\Lambda(c),\overline\Lambda(c)]$ by
  (\ref{eq:LambdaBounds}). Since $l(\alpha|X|\mathds1_A)
  =\Phi_l(\alpha|X|)\mathds1_A+l(0)$, Young's inequality shows for any
  $A\in\FC$, $\alpha>0$ and $Q$ with $\varphi^*_l(Q)\leq c$,
  \begin{align*}
    \EB_Q[|X|\mathds1_A] &\leq \frac1{\alpha \lambda_Q}\left(
      \EB[\Phi_l(\alpha|X|)\mathds1_A]+\left(l(0)
        +\EB\left[l^*\left(\lambda_Q\frac{dQ}{d\PB}\right)\right]\right)\right)\\
    &\leq\frac1{\alpha\lambda_Q}\EB[\Phi_l(\alpha|X|)\mathds1_A]+\frac{c+1}\alpha
    \leq
    \frac1{\alpha\underline\Lambda(c)}\EB[\Phi_l(\alpha|X|)\mathds1_A]+\frac{c+1}\alpha.
  \end{align*}
  Since $X\in M^{\Phi_l}$, the desired uniform integrability follows
  from a diagonal argument.

  On the other hand, note that $l(\alpha|X|/2)\leq\frac{1}{2}l(\alpha
  |X|-x)+\frac12 l(x)$ by convexity, hence $M^{\hat\varphi_l}\subset
  M^{\Phi_l}$ follows from (\ref{eq:Shortfall2}), and we deduce that
  the three spaces agree.
\end{proof}




\begin{remark}
  In definition (\ref{eq:ShortfallRisk1}), we have chosen $l(0)$ for
  the acceptance level so that $\varphi_l(0)=0$. If $\varphi_l$ is
  defined with other acceptance level $\delta$ instead of $l(0)$, we
  can normalize it by adding the constant
  $a^l(\delta):=\sup\{x:l(x)\leq \delta\}$ or equivalently replacing
  the function $l$ by $x\mapsto l(x+a^l(\delta))$. The case
  $l(0)=\inf_xl(x)$ corresponds to the worst case risk function
  $\varphi^{\text{worst}}(X)=\esssup X$. Also, if $l(x)=e^x$, then
  $\varphi_l=\varphi_{\mathrm{ent}}$.
\end{remark}

\subsection{Robust Shortfall Risk}
\label{sec:ExRobShortfall}

Let $l$ be as above and fix a set $\PC$ of probabilities $P\ll \PB$
such that
\begin{equation}
  \label{eq:PCompact}
  \PC\text{ is convex and weakly compact in }L^1.
\end{equation}
Then we consider a \emph{robust shortfall risk function}
\begin{equation}
  \label{eq:RobustSF1}
  \varphi_{l,\PC}(X):=\inf\{x\in\RB:\, \sup_{P\in\PC}\EB_P[l(X-x)]\leq l(0)\}, \quad X\in L^\infty.
\end{equation}
The function $\varphi_{l,\PC}$ on $L^\infty$ is a convex risk function
whose conjugate is given by
\begin{equation}
  \label{eq:RobShortfall1}
  \varphi^*_{l,\PC}(Q) :=\inf_{\lambda>0} 
  \frac1\lambda\left(l(0)+\inf_{P\in\PC}\EB_P\left[l^*\left(\lambda \frac{dQ}{dP}\right)\right]\right)
\end{equation}
with the convention $l^*(\infty):=\infty$ and $ \frac{dQ}{dP}
:=\frac{dQ/d\PB}{dP/d\PB}\mathds1_{\{dP/d\PB>0\}}
+\infty\cdot\mathds1_{\{dQ/d\PB>0,dP/d\PB=0\}}$ (see
\citep[Corollary~4.119]{MR2779313}). Slightly modifying the argument
for (\ref{eq:Shortfall2}), we still have
\begin{equation}
  \label{eq:RobustSF2}
  \hat\varphi_{l,\PC}(|X|)=\inf\{x\in\RB:\, \sup_{P\in\PC}\EB_P[l(|X|-x)]\leq l(0)\}, \quad X\in L^0.
\end{equation}


We introduce a couple of ``robust analogues'' of $M^{\Phi_l}$:
\begin{align*}
  M^{\Phi_l}(\PC) &:=\{X\in L^0:\,
  \sup_{P\in\PC}\EB_P[\Phi_l(\lambda|X|)]<\infty,\,\forall
  \lambda>0\}\\
  M^{\Phi_l}_u(\PC)&:=\{X\in L^0:\,
  \lim_{N\rightarrow\infty}\sup_{P\in\PC}\EB_P[\Phi_l(\lambda|X|)\mathds1_{\{|X|>N\}}]=0,\,\forall
  \lambda>0\}.
\end{align*}
When $\PC=\{\PB\}$, the two spaces coincide with $M^{\Phi_l}$. Now we
have:
\begin{proposition}
  \label{prop:RobShortfall}
  Assume (\ref{eq:PCompact}). Then $\varphi_{l,\PC}$ is Lebesgue on
  $L^\infty$ and
  \begin{align*}
    M^{\Phi_l}_u(\PC)= M^{\hat\varphi_{l,\PC}}_u\subset
    M^{\hat\varphi_{l,\PC}}\subset M^{\Phi_l}(\PC).
  \end{align*}
\end{proposition}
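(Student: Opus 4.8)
The plan is to mirror the proof of Proposition~\ref{prop:ExShortfall1}, upgrading every estimate so that it holds \emph{uniformly over} $P\in\PC$, the uniform integrability of the weakly compact family $\PC$ in (\ref{eq:PCompact}) being exactly what makes this uniformity available. I would first record two elementary facts forced by the hypotheses on $l$. Since $l$ is convex, increasing and $l(0)>\inf_x l(x)$, its left derivative at $0$ is strictly positive (otherwise $l'\equiv 0$ on $(-\infty,0]$ and $l(0)=\inf_x l(x)$), so that $l(-x)<l(0)$ for every $x>0$; and the only flat stretch of $l$ is a half-line $\{t\le t_0\}$ with $t_0<0$, on which $l\equiv\inf_x l(x)$. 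For the \emph{Lebesgue property on $L^\infty$}, let $X_n\to X$ a.s.\ with $\sup_n\|X_n\|_\infty<\infty$. Fatou's lemma (legitimate as $l\ge\inf_x l(x)$) applied to $\EB_P[l(X_n-\beta-\varepsilon)]\le l(0)$ along a subsequence realizing $\beta:=\liminf_n\varphi_{l,\PC}(X_n)$ gives $\sup_{P}\EB_P[l(X-\beta-\varepsilon)]\le l(0)$, whence $\varphi_{l,\PC}(X)\le\beta$. For the reverse inequality put $x=\varphi_{l,\PC}(X)$, so $\sup_P\EB_P[l(X-x)]\le l(0)$, and fix $\varepsilon>0$. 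Because $l(X-x-\varepsilon)\in L^\infty$, weak compactness of $\PC$ makes $P\mapsto\EB_P[l(X-x-\varepsilon)]$ attain its supremum at some $P_\varepsilon$; were this supremum equal to $l(0)$, comparison with $\EB_{P_\varepsilon}[l(X-x)]\le l(0)$ would force $l(X-x)=l(X-x-\varepsilon)$ $P_\varepsilon$-a.s., hence $X-x\le t_0$ $P_\varepsilon$-a.s.\ and the common value would be $\inf_x l(x)<l(0)$, a contradiction. Thus $\sup_P\EB_P[l(X-x-\varepsilon)]<l(0)$, and the uniform dominated convergence $\sup_P\EB_P\big[|l(X_n-x-\varepsilon)-l(X-x-\varepsilon)|\big]\to0$ (Egorov plus uniform integrability of $\PC$) yields $\varphi_{l,\PC}(X_n)\le x+\varepsilon$ eventually; letting $\varepsilon\downarrow0$ gives the Lebesgue property.

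Turning to the two inclusions involving $M^{\hat\varphi_{l,\PC}}$, the containment $M^{\hat\varphi_{l,\PC}}_u\subset M^{\hat\varphi_{l,\PC}}$ is the general fact established in Section~\ref{sec:MaxLebExt}. For $M^{\hat\varphi_{l,\PC}}\subset M^{\Phi_l}(\PC)$ I would copy the convexity trick of Proposition~\ref{prop:ExShortfall1}: if $\hat\varphi_{l,\PC}(\alpha|X|)<\infty$, choose $x$ with $\sup_P\EB_P[l(\alpha|X|-x)]\le l(0)$; then $l(\tfrac{\alpha}{2}|X|)\le\tfrac12 l(\alpha|X|-x)+\tfrac12 l(x)$ gives $\sup_P\EB_P[\Phi_l(\tfrac{\alpha}{2}|X|)]<\infty$, and since $\alpha>0$ is arbitrary, $X\in M^{\Phi_l}(\PC)$ by (\ref{eq:RobustSF2}).

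It remains to prove the equality $M^{\Phi_l}_u(\PC)=M^{\hat\varphi_{l,\PC}}_u$; write $Y_N:=\lambda|X|\mathds1_{\{|X|>N\}}$, so that $\Phi_l(Y_N)=\Phi_l(\lambda|X|)\mathds1_{\{|X|>N\}}$. For $\subset$, let $X\in M^{\Phi_l}_u(\PC)$. Splitting $l(Y_N-x)$ over $\{|X|>N\}$ and its complement (where $Y_N=0$) and using $l(Y_N-x)\le l(Y_N)$ gives, for every $x>0$,
\[
  \sup_{P\in\PC}\EB_P[l(Y_N-x)]
  \le \sup_{P\in\PC}\EB_P\big[\Phi_l(\lambda|X|)\mathds1_{\{|X|>N\}}\big]
  + l(-x)+(l(0)-l(-x))\sup_{P\in\PC}P(|X|>N).
\]
As $N\to\infty$ the first term vanishes by hypothesis and $\sup_{P\in\PC}P(|X|>N)\to0$ by uniform integrability of $\PC$, so the right-hand side tends to $l(-x)<l(0)$; hence $\sup_P\EB_P[l(Y_N-x)]\le l(0)$ for large $N$, i.e.\ $\hat\varphi_{l,\PC}(Y_N)\le x$ by (\ref{eq:RobustSF2}). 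Since $x>0$ is arbitrary and $\hat\varphi_{l,\PC}(Y_N)\ge0$, we conclude $\hat\varphi_{l,\PC}(\lambda|X|\mathds1_{\{|X|>N\}})\to0$ for every $\lambda$, i.e.\ $X\in M^{\hat\varphi_{l,\PC}}_u$. For $\supset$, let $X\in M^{\hat\varphi_{l,\PC}}_u$ and set $h_N:=\hat\varphi_{l,\PC}(Y_N)\to0$, so that $\sup_P\EB_P[l(Y_N-h_N)]\le l(0)$; then $l(\tfrac12 Y_N)\le\tfrac12 l(Y_N-h_N)+\tfrac12 l(h_N)$ gives
\[
  \sup_{P\in\PC}\EB_P\big[\Phi_l(\tfrac{\lambda}{2}|X|)\mathds1_{\{|X|>N\}}\big]
  =\sup_{P\in\PC}\EB_P[\Phi_l(\tfrac12 Y_N)]\le\tfrac12\big(l(h_N)-l(0)\big)\to0,
\]
and, $\lambda>0$ being arbitrary, $X\in M^{\Phi_l}_u(\PC)$.

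The main obstacle is the Lebesgue property on $L^\infty$. One cannot argue $\varphi_{l,\PC}$ is Lebesgue simply because each $\varphi_{l,\{P\}}$ is, nor by a naive continuity of the implicit level $x$, because the map $x\mapsto\sup_P\EB_P[l(X-x)]$ may be flat at the level $l(0)$. The decisive point is that \emph{weak compactness of $\PC$} supplies a maximizing measure $P_\varepsilon$, reducing the strict inequality $\sup_P\EB_P[l(X-x-\varepsilon)]<l(0)$ to a single-measure statement controlled by the fact that $l$'s only flat stretch sits strictly below $l(0)$; the same uniformity over $\PC$ resurfaces in the $u$-space identity through the term $\sup_{P\in\PC}P(|X|>N)$. (As an alternative to the direct argument, the inclusion $M^{\Phi_l}_u(\PC)\subset M^{\hat\varphi_{l,\PC}}_u$ can be routed through Theorem~\ref{thm:MuCharact} once the Lebesgue property is in hand, via uniform integrability of $\{X\,dQ/d\PB:\varphi^*_{l,\PC}(Q)\le c\}$ using the conjugate (\ref{eq:RobShortfall1}), but the present self-contained estimate is cleaner.)
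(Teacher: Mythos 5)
Your proposal is correct, but it takes a genuinely different route from the paper's in both substantive components. For the Lebesgue property on $L^\infty$, the paper argues dually: using the conjugate formula (\ref{eq:RobShortfall1}) and Jensen's inequality it confines the optimizing $\lambda_Q$ to $[\underline\Lambda(c),\overline\Lambda(c)]$ for $Q$ in a sublevel set $\QC_c$ of $\varphi^*_{l,\PC}$, deduces uniform integrability of $\QC_c$ from the robust de la Vall\'ee-Poussin lemma of F\"ollmer--Gundel, and concludes via the JST characterization (Theorem~\ref{thm:JSTLinfty}); you instead work primally with (\ref{eq:RobustSF1}), using the weak compactness of $\PC$ only to produce a maximizer $P_\varepsilon$ (whence the strict inequality $\sup_P\EB_P[l(X-x-\varepsilon)]<l(0)$, exploiting that any flat stretch of $l$ is a half-line sitting strictly below level $l(0)$) and to supply the uniform absolute continuity needed in the Egorov step. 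Similarly, for $M^{\Phi_l}_u(\PC)\subset M^{\hat\varphi_{l,\PC}}_u$ the paper recycles the uniform integrability of $\QC_c$ through Young's inequality and Theorem~\ref{thm:MuCharact}, whereas your direct estimate on $\sup_P\EB_P[l(Y_N-x)]$ bypasses duality entirely --- a small bonus, since the standing hypothesis $X\in M^{\hat\varphi}$ in Theorem~\ref{thm:MuCharact} then never needs separate verification; your reverse inclusion is the paper's convex-combination trick in a tidier form (weights $\tfrac12,\tfrac12$ with attained levels $h_N\to0$ and continuity of $l$, instead of the paper's $1/n$-weights, levels $2^{-n}$ and a diagonal sequence $N_n$), and the inclusion $M^{\hat\varphi_{l,\PC}}\subset M^{\Phi_l}(\PC)$ coincides with the paper's. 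In sum, the paper's route is shorter given its general machinery and yields the uniform integrability of $\QC_c$ as a reusable by-product, while yours is self-contained and elementary at the cost of redoing compactness arguments by hand. Two cosmetic repairs: your justification of Fatou's lemma (``$l\ge\inf_x l(x)$'') fails verbatim when $\inf_x l(x)=-\infty$ --- the correct reason is that the arguments are uniformly bounded below, so the integrands are bounded below by the finite value $l(-C-\beta-\varepsilon)$; and the contradiction step should record the trivial sub-case where $l$ has no flat stretch, in which $l(X-x)=l(X-x-\varepsilon)$ $P_\varepsilon$-a.s.\ is impossible outright.
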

\begin{proof}
  With a similar reasoning as Proposition~\ref{prop:ExShortfall1}, if
  $Q\in \QC_c:=\{Q\ll\PB:\varphi^*_{l,\PC}(Q)\leq c\}$, there exist
  $\lambda_Q\in [\underline\Lambda(c), \overline\Lambda(c)]$ and
  $P_Q\in\PC$ such that
  \begin{align*}
    \frac1{\lambda_Q}\left(l(0)+\inf_{P\in\PC}\EB_P\left[l^*\left(\lambda_Q\frac{dQ}{dP}\right)\right]\right)
    & \leq
    \frac1{\lambda_Q}\left(l(0)+\EB_{P_Q}\left[l^*\left(\lambda_Q\frac{dQ}{dP_Q}\right)\right]\right)\leq
    c+1.
  \end{align*}
  In particular, $\inf_{P\in\PC}\EB_P[l^*(\lambda_Q dQ/dP)]\leq
  \overline\Lambda(c)(c+1)-l(0)$ whenever $Q\in\QC_c$. In view of
  (\ref{eq:LConj1}), this shows that $\{\lambda_QdQ/d\PB:Q\in\QC_c\}$
  is uniformly integrable thanks to the robust version of de la
  Vallée-Poussin theorem \citep[][Lemma~2.12]{MR2247836} (which is
  stated there for sets of probabilities, but the exactly same proof
  works for sets of positive finite measures), hence so is $\QC_c$
  since $\lambda_Q\geq \underline\Lambda(c)$ for each
  $Q\in\QC_c$. Consequently, $\varphi_{l,\PC}$ is Lebesgue on
  $L^\infty$ by the JST theorem (Theorem~\ref{thm:JSTLinfty}).

  From the same inequality, we see also that
  \begin{align*}
    \EB_Q[|X|\mathds1_A]&\leq \frac1{\alpha\lambda_Q}
    \left(\EB_{P_Q}[\Phi(\alpha|X|)\mathds1_A]+l(0)
      +\EB_{P_Q}\left[l^*\left(\lambda_Q\frac{dQ}{dP_Q}\right)\right]\right)\\
    &\leq\frac1{\alpha\underline\Lambda(c)}\sup_{P\in\PC}\EB_P[\Phi(\alpha|X|)\mathds1_A]
    +\frac{c+1}\alpha
  \end{align*}
  for any $\alpha>0$, $A\in\FC$ and $Q\in\QC_c$. Hence if $X\in
  M^{\Phi_l}_u(\PC)$, a diagonal argument shows that $\{XdQ/d\PB:
  Q\in\QC_c\}$ is uniformly integrable, hence
  $M^{\Phi_l}_u(\PC)\subset M^{\hat\varphi_{l,\PC}}_u$ by
  Theorem~\ref{thm:MuCharact}.



  To see $M^{\Phi_l}_u(\PC)\supset M^{\hat\varphi_{l,\PC}}_u$, let
  $X\in M^{\hat\varphi_{l,\PC}}_u$ and $\alpha>0$. By the definition
  of $M^{\hat\varphi_{l,\PC}}_u$, there is a sequence $(N_n)_n\subset
  \NB$ such that $\hat\varphi_{l,\PC}(n
  \alpha|X|\mathds1_{\{|X|>N_n\}})< 2^{-n}$. Then by
  (\ref{eq:RobustSF2}),
  \begin{align*}
    \sup_{P\in\PC}\EB_P[l(n\alpha|X|\mathds1_{\{|X|>N_n\}}-2^{-n})]\leq
    l(0).
  \end{align*}
  Noting that
  $\Phi_l(\alpha|X|\mathds1_{A_n})=l(\alpha|X|\mathds1_{A_n})-l(0)\leq
  n^{-1}l(n\alpha|X|\mathds1_{A_n}-2^{-n})+\frac{n-1}nl(\frac{2^{-n}}{n-1})-l(0)$
  with $A_n:=\{|X|>N_n\}$ by the convexity, we have
  \begin{align*}
    \sup_{P\in\PC}\EB_P[\Phi_l(\alpha|X|)\mathds1_{A_{n}}]
    &\leq\frac{1}{n}\sup_{P\in\PC}
    \EB_P[l(n\alpha|X|\mathds1_{A_{n}}-2^{-n})]+\frac{n-1}{n}l\left(\frac{2^{-n}}{n-1}\right)-l(0) \\
    &\leq
    \frac{l(0)}{n}+\frac{n-1}{n}l\left(\frac{2^{-n}}{n-1}\right)-l(0)
    \rightarrow 0+l(0)-l(0)=0.
  \end{align*}
  Since $\alpha>0$ is arbitrary, we have $X\in M^{\Phi_l}_u(\PC)$.

  Finally, we show $M^{\hat\varphi_{l,\PC}}\subset M^{\Phi_l}(\PC)$.
  If $X\in M^{\hat\varphi_{l,\PC}}$, then for every $\alpha>0$,
  \begin{align*}
    \sup_{P\in\PC}\EB_P[\Phi_l(\alpha|X|)]
    &=\sup_{P\in\PC}\EB_P[l(\alpha|X|)]-l(0)\\
    & \leq
    \frac{1}{2}\sup_{P\in\PC}\EB_P[l(2\alpha|X|-x)]+\frac{1}{2}l(x)-l(0)<\infty.
  \end{align*}
  for $x>\hat\varphi_{l,\PC}(\alpha|X|)$ by (\ref{eq:RobustSF2}). Thus
  $M^{\hat\varphi_{l,\PC}}\subset M^{\Phi_l}(\PC)$.
\end{proof}

\begin{example}[Robust Entropic Risk Functions]
  Let $l(x)=e^x$. Then $\varphi_{l,\PC}$ is the entropic one, and
  the associated Young function is $\Phi_{\exp}(x)=e^{x}-1$. In this
  case, we have $M^{\Phi_{\exp}}_u(\PC)=M^{\Phi_{\exp}}(\PC)$,
  thus
  $M^{\hat\varphi_{l,\PC}}_u=M^{\hat\varphi_{l,\PC}}$. Indeed, by
  Hölder's inequality,
  \begin{align*}
    \sup_{P\in\PC}\EB_P[e^{\alpha|X|}\mathds1_{\{|X|>N\}}] &\leq
    \sup_{P\in\PC}\left(\EB_P[e^{2\alpha|X|}]^{1/2}P(|X|>N)^{1/2}\right)\\
    &\leq
    \sup_{P\in\PC}\EB_P[e^{2\alpha|X|}]^{1/2}\sup_{P\in\PC}P(|X|>N)^{1/2}.
  \end{align*}
  This and the uniform integrability of $\PC$ show that
  $\lim_N\sup_{P\in\PC}\EB_P[e^{\alpha|X|}\mathds1_{\{|X|>N\}}]=0$ for
  every $\alpha>0$ as soon as $X\in M^{\Phi_{\exp}}(\PC)$, hence
  $M^{\Phi_{\exp}}_u(\PC)=M^{\Phi_{\exp}}(\PC)$.
\end{example}

\subsection{Law-Invariant Case}
\label{sec:LawInv}

Recall that a convex risk function $\varphi_0$ on $L^\infty$ is called
\emph{law-invariant} if $\varphi_0(X)=\varphi_0(Y)$ whenever $X$ and
$Y$ have the same distribution. Any law-invariant convex risk function
on $L^\infty$ has the following \emph{Kusuoka representation}
(\citep{MR1886557}, \citep{MR2766847}):
\begin{align}\label{eq:Kusuoka1}
  \varphi_0(X)=\sup_{\mu\in\MC_1((0,1])}\left(\int_{(0,1]}v_\lambda
    (X)\mu(d\lambda)-\beta(\mu)\right)
\end{align}
where $v_\lambda(X):=\frac{1}{\lambda}\int_0^\lambda q_X(1-t)dt$, the
\emph{average value at risk at level $\lambda$} (up to change of
sign), $q_X(t):=\inf\{x: \PB(X\leq x)>t\}$, $\MC_1((0,1])$ is the set
of all Borel probability measures on $(0,1]$ and $\beta$ is a lower
semi-continuous penalty function.  Then $\varphi_0$ has the Lebesgue
property on $L^\infty$ if and only if all the level sets
$\{\mu:\,\beta(\mu)\leq c\}$ are relatively weak* compact in
$\MC_1((0,1])$ or equivalently tight
(\citep[Ch.~5]{delbaen12:_monet_utilit_funct} or
\citep{jouini06:_law_fatou}). In particular, for any relatively weak*
compact convex set $\MC\subset \MC_1((0,1])$,
\begin{align*}
  \varphi_\MC(X):=\sup_{\mu\in\MC}\int_{(0,1]}v_\lambda(X)\mu(d\lambda)
\end{align*}
is a law-invariant coherent risk function on $L^\infty$ satisfying the
Lebesgue property.

\begin{example}[AV@R]\label{ex:AVAR}
  For every $\lambda \in (0,1]$, $v_\lambda$ admits the
  representation:
  \begin{equation}
    \label{eq:AVaR2}
    v_\lambda(X)=\sup\{\EB_Q[X]:\,Q\in\PC,\, dQ/d\PB\leq 1/\lambda\},
  \end{equation}
  for all $X\in L^\infty$, and since $\hat
  v_\lambda(|X|)=\sup_{n}v_\lambda(|X|\wedge n)$,
  \begin{align*}
    \|X\|_{L^1}\leq \hat v_\lambda(|X|)=\|X\|_{\hat v_\lambda}\leq
    \frac1\lambda \|X\|_{L^1},\quad X\geq 0.
  \end{align*}
  Hence we have $M^{\hat v_\lambda}_u=M^{\hat v_\lambda}=L^1$ for
  every $\lambda\in (0,1]$, and the representation (\ref{eq:AVaR2})
  extends to $L^1$. In particular, $\hat v_\lambda$ has the Lebesgue
  property on $L^1$.
    
\end{example}

\begin{example}[Concave Distortions]
  Let $\mu\in \MC_1((0,1])$ and define
  \begin{align*}
    \varphi_\mu(X):=\int_{(0,1]}v_t(X)\mu(dt).
  \end{align*}
  This type of risk functions are called concave distortion, and it is
  known that if the probability space $(\Omega,\FC,\PB)$ is atomless,
  every law-invariant \emph{comonotonic} risk function is written in
  this form (see \citep[][Theorem~4.93]{MR2779313}). For
  $\varphi_\mu$, two spaces $M^{\hat\varphi_\mu}_u$ and
  $M^{\hat\varphi_\mu}$ coincide. Indeed, if
  $\hat\varphi_\mu(|X|)<\infty$ ($\Leftrightarrow$ $\hat v_\cdot
  (|X|)\in L^1((0,1],\mu)$), then from Example~\ref{ex:AVAR}, we see
  that $v_t(|X|\mathds1_{\{|X|>N\}})\leq v_t(|X|)$ and
  $\lim_Nv_t(|X|\mathds1_{\{|X|>N\}})=0$ for ($\mu$-a.e., hence) all
  $t\in(0,1]$. Thus the dominated convergence theorem implies that
  \begin{align*}
    \lim_N\int_{(0,1]}\hat v_t(|X|\mathds1_{\{|X|>N\}})\mu(dt)=
    \int_{(0,1]}\lim_N\hat v_t(|X|\mathds1_{\{|X|>N\}})\mu(dt)=0.
  \end{align*}
  Repeating the same argument for $\alpha|X|$ ($\alpha>0$) instead of
  $X$, we have $M^{\hat\varphi_\mu}_u=M^{\hat\varphi_\mu}$.
\end{example}

Recall that any \emph{finite-valued} convex risk function on a solid
and rearrangement-invariant space strictly bigger than $L^\infty$ has
the Lebesgue property \emph{restricted to} $L^\infty$
(\citep[][Theorem~3]{MR2509290} or see the comment after
Theorem~\ref{thm:JSTLinfty}). The next example concerns how is the
Lebesgue property on the whole space. In our context, both
$M^{\hat\varphi}_u$ and $M^{\hat\varphi}$ are (solid and)
rearrangement-invariant if the $\varphi_0$ is law-invariant, and
$M^{\hat\varphi}$ is the maximum solid vector space on which
$\hat\varphi$ is finite-valued. Then the question is translated as:
does it hold $M^{\hat\varphi}_u=M^{\hat\varphi}$ \emph{as soon as
  $\varphi_0$ is law-invariant}?  The answer is generally no.

\begin{example}[A law-invariant risk function with
  $M^{\hat\varphi}_u\subsetneq
  M^{\hat\varphi}$]\label{ex:LawInvariantNonUI}
  Let $(\Omega,\FC,\PB)$ be atomless and for each $n$, we define a
  Borel probability measure on $(0,1]$ by
  \begin{align}\label{eq:ExLawinvCountFam1}
    \mu_n(dt)
    :=\left(1-\frac{1}{n}\right)\frac{e}{e-1}\mathds1_{(e^{-1},1]}(t)dt
    +\frac{1}{n}\frac{e^n}{e-1}\mathds1_{(e^{-n},e^{-n+1}]}(t)dt.
  \end{align}
  Then $(\mu_n)_n$ (and hence $\overline{\mathrm{conv}}(\mu_n;
  n\in\NB)$) is uniformly integrable in $L^1((0,1],dt)$
  ($\Leftrightarrow$ weak* compact in $\MC_1((0,1])$). Hence the
  law-invariant coherent risk function
  \begin{align*}
    \varphi_0(X):=\sup_n\int_{(0,1]}v_t(X)\mu_n(dt)\quad \left(
    \Rightarrow\, \hat\varphi(|X|)=\sup_n\int_{(0,1]}\hat v_\lambda
    (|X|)\mu_n(d\lambda)\right)
  \end{align*}
  has the Lebesgue property on $L^\infty$. In this case,
  $M^{\hat\varphi}_u\subsetneq M^{\hat\varphi}$. Indeed, let $X$ be an
  exponential random variable with parameter 1, i.e.,
  $F_X(x):=\PB(X\leq x)=1-e^{-x}$ $\Leftrightarrow$
  $q_X(t)=-\log(1-t)$. Then
  \begin{align*}
    \hat v_\lambda(X)&=\frac1\lambda\int_0^\lambda (-\log
    t)dt=1-\log\lambda.
  \end{align*}
  For each $n$, $\int_{(0,1]}\hat v_t(X)\mu_n(dt)
  =4-\frac{e}{e-1}-\frac1n$, so $\hat\varphi(X)= \sup_n\int_{(0,1]}\hat
  v_t(X)\mu_n(dt)=4-\frac{e}{e-1}<\infty$. This shows that $X\in
  M^{\hat\varphi}$. We next compute $\lim_N\varphi(X\mathds1_{\{X>N\}})$.
  Since $q_{X\mathds1_{\{X>N\}}}(t)=q_X\mathds1_{\{q_X(t)>N\}}$ and
  $q_X(1-t)>N$ $\Leftrightarrow$ $t<1-F_X(N)=e^{-N}$,
  \begin{align*}
    \hat v_\lambda(X\mathds1_{\{X>N\}}) &=\frac1\lambda
    \int_0^\lambda q_X(1-t)\mathds1_{\{q_X(1-t)>N\}}dt\\
    & =\{\lambda\wedge e^{-N}-(\lambda\wedge e^{-N})\log
    (\lambda\wedge e^{-N}))\}/\lambda.
  \end{align*}
  Thus for $n>N+1$,
  \begin{align*}
    &       \int_{(0,1]}\hat v_t(X\mathds1_{\{X>N\}})\mu_n(dt)\\
    &\quad =\left(1-\frac1n\right)\frac{e}{e-1}\left(e^{-N}-e^{-N}\log
      e^{-N}\right)+\frac1n \left(2+n-\frac{e}{e-1}\right)\\
    &\quad =1+\frac{e}{e-1}\left(e^{-N}-e^{-N}\log e^{-N}\right)
    +\frac1n\left\{2-\frac{e}{e-1}\left(1+e^{-N}-e^{-N}\log
        e^{-N}\right)\right\}
  \end{align*}
  Hence $\hat\varphi(X\mathds1_{\{X>N\}}) =\sup_n\int_{(0,1]}\hat
  v_t(X\mathds1_{\{X>N\}})\mu_n(dt) =
  1+\frac{e}{e-1}\left(e^{-N}-e^{-N}\log e^{-N}\right)$. Consequently,
  $\lim_{N\rightarrow \infty}\varphi(X\mathds1_{\{X>N\}})\geq
  1+\lim_N\frac{e}{e-1}\left(e^{-N}-e^{-N}\log e^{-N}\right)=1$.  Thus
  $X\not\in M^{\hat\varphi}_u$.
\end{example}

\section*{Acknowledgements}

The author warmly thanks Takuji Arai for numerous discussions and
comments. He also thank an anonymous referee for carefully reading the
manuscript. The financial support of the Center for Advanced Research
in Finance (CARF) at the Graduate School of Economics of the
University of Tokyo is gratefully acknowledged.

\appendix
\addcontentsline{toc}{section}{Appendix}
\section*{Appendix}
\label{sec:Omitted}

We have used the following version of minimax theorem which should be
known as it is an immediate corollary to \citep[][Theorems~1 and
2]{MR764631}. But we could not find an appropriate reference, so we
include here a simple proof.
\begin{theorem}
  \label{thm:AppMinimax}
  Let $C$ be a convex subset of a Hausdorff topological vector space,
  and $D$ an arbitrary convex set. Suppose we are given a function
  $f:C\times D\rightarrow\RB$ such that
  \begin{enumerate}
  \item for any $y\in D$, $x\mapsto f(x,y)$ is convex and
    $\{x\in C:\, f(x,y)\leq c\}$
    is compact for each $c\in\RB$;
  \item for any $x\in C$, $y\mapsto f(x,y)$ is concave on $D$.
  \end{enumerate}
  Then we have
  \begin{equation}
    \label{eq:Minimax1}
    \inf_{x\in C}\sup_{y\in D}f(x,y)=\sup_{y\in D}\inf_{x\in C}f(x,y).
  \end{equation}

\end{theorem}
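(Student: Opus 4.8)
The plan is to establish the nontrivial inequality $\inf_{x\in C}\sup_{y\in D}f(x,y)\le \sup_{y\in D}\inf_{x\in C}f(x,y)$, the reverse inequality $\sup_y\inf_x f\le\inf_x\sup_y f$ being the elementary ``max--min $\le$ min--max'' valid for any function. Write $v^*:=\inf_{x\in C}\sup_{y\in D}f(x,y)$ and $v_*:=\sup_{y\in D}\inf_{x\in C}f(x,y)$, and argue by contradiction: suppose $v_*<v^*$ and fix a real $c$ with $v_*<c<v^*$. For each $y\in D$ set $A_y:=\{x\in C:\,f(x,y)\le c\}$, which is compact by hypothesis~(1), hence closed since the ambient space is Hausdorff. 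As $c<v^*$, for every $x\in C$ there is some $y$ with $f(x,y)>c$, that is, $\bigcap_{y\in D}A_y=\emptyset$. Fix any $y_0$ with $A_{y_0}\neq\emptyset$ (if some $A_y=\emptyset$ then $\inf_xf(x,y)\ge c>v_*$, an immediate contradiction). The sets $A_{y_0}\cap A_y$ are closed in the compact set $A_{y_0}$ with empty intersection, so by the finite intersection property there is a finite subfamily $y_1,\dots,y_n\in D$ (we may include $y_0$ among them) with $\bigcap_{i=1}^nA_{y_i}=\emptyset$. Equivalently $\max_{1\le i\le n}f(x,y_i)>c$ for every $x\in C$, whence
\[
\alpha:=\inf_{x\in C}\max_{1\le i\le n}f(x,y_i)\ge c,
\]
and $\alpha$ is finite since $\alpha\le\max_if(x,y_i)<\infty$ for any fixed $x$.

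The \emph{core} of the argument is a finite-dimensional separation producing convex weights. Consider
\[
S:=\bigl\{t\in\RB^n:\,\exists\,x\in C\text{ with }f(x,y_i)\le t_i,\ i=1,\dots,n\bigr\}.
\]
Using the convexity of $x\mapsto f(x,y_i)$ I would check that $S$ is convex (a single convex combination of the witnessing points serves all coordinates at once) and upward closed, $S+\RB^n_+\subset S$. The open convex set $U:=\{t\in\RB^n:\,t_i<\alpha,\ \forall i\}$ is disjoint from $S$, since $t\in S\cap U$ would produce an $x$ with $\max_if(x,y_i)<\alpha$. Separating $U$ and $S$ in $\RB^n$ by Hahn--Banach yields a nonzero $\lambda$ and a constant $\beta$ with $\langle\lambda,u\rangle\le\beta\le\langle\lambda,s\rangle$ for all $u\in U$, $s\in S$; the upward closedness of $S$ forces $\lambda\ge0$, so after normalising $\sum_i\lambda_i=1$ the vector $\lambda$ lies in the simplex $\Delta_n:=\{\lambda\in\RB^n_+:\sum_i\lambda_i=1\}$. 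Taking the supremum over $U$ gives $\alpha=\sup_{u\in U}\langle\lambda,u\rangle\le\beta$, while testing $S$ on the points $(f(x,y_i))_i$ for $x\in C$ gives $\sum_i\lambda_if(x,y_i)\ge\beta\ge\alpha$ for every $x\in C$.

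It then remains to fold this finite convex combination back into a single element of $D$. Since $D$ is convex, $y_\lambda:=\sum_i\lambda_iy_i\in D$, and the concavity of $y\mapsto f(x,y)$ from hypothesis~(2) gives $\sum_i\lambda_if(x,y_i)\le f(x,y_\lambda)$ for each $x$. Combining, $\inf_{x\in C}f(x,y_\lambda)\ge\inf_{x\in C}\sum_i\lambda_if(x,y_i)\ge\alpha\ge c$, so $v_*\ge\inf_xf(x,y_\lambda)\ge c$, contradicting $c>v_*$. This settles $v^*\le v_*$ and hence (\ref{eq:Minimax1}); the degenerate cases $v^*=\pm\infty$ or $v_*=+\infty$ give equality directly since $v_*\le v^*$ always holds. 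The step I expect to be most delicate is the finite-dimensional separation: one must verify carefully that $S$ is convex and upward closed and that the separating functional can be normalised to lie in $\Delta_n$, for this is precisely where convexity in $x$, the compactness-driven reduction to finitely many $y_i$, and the normalisation conspire. The concavity-plus-convexity folding at the end is then routine.
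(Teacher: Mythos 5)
Your proof is correct, but it takes a genuinely different route from the paper's. The paper's own proof is deliberately short and citation-based: writing $\alpha=\sup_{y\in D}\inf_{x\in C}f(x,y)$, it invokes Theorem~1 of Jo{\'o} \citep{MR764631} to get the \emph{finite intersection property} of the level sets $A_y^{\alpha+\varepsilon}=\{x\in C:\,f(x,y)\leq\alpha+\varepsilon\}$ (this is where the convex/concave structure of $f$ enters, inside the cited result), uses the assumed compactness to conclude $\bigcap_{y\in D}A_y^{\alpha+\varepsilon}\neq\emptyset$ for every $\varepsilon>0$, and then invokes Jo{\'o}'s Theorem~2, which says this non-emptiness is equivalent to the minimax identity. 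You instead prove everything from scratch: arguing by contradiction with $v_*<c<v^*$, you use compactness to extract a finite subfamily $y_1,\dots,y_n$ whose level sets $A_{y_i}$ have empty intersection (precisely the negation of the FIP the paper cites), and you then settle the resulting finite minimax step by hand, via Hahn--Banach separation in $\RB^n$ between the convex, upward-closed set $S$ of vectors dominating some $\bigl(f(x,y_i)\bigr)_i$ and the open set $\{t:\,t_i<\alpha\ \forall i\}$; the normalised separating functional gives simplex weights $\lambda$, and concavity in $y$ folds $\sum_i\lambda_if(x,y_i)\geq\alpha\geq c$ into $\inf_{x\in C}f(x,y_\lambda)\geq c$ for $y_\lambda=\sum_i\lambda_iy_i\in D$, contradicting $c>v_*$. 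In effect, your separation argument is a self-contained proof of the content of Jo{\'o}'s Theorem~1 in this convex/concave setting. What each buys: the paper's route is two lines once the reference is granted; yours is longer but makes the appendix self-contained and shows exactly where each hypothesis acts (convexity of $C$ and of $f(\cdot,y_i)$ for convexity of $S$; compactness plus Hausdorff for closedness and the finite reduction; convexity of $D$ and concavity in $y$ for the folding). The steps you flag as delicate all check out, including $\lambda\geq0$ from upward-closedness and $\sup_{u}\langle\lambda,u\rangle=\alpha$ after normalisation. One small remark: your closing sentence about degenerate infinite values is superfluous, since whenever $v_*<v^*$ a real $c$ strictly between them exists and your contradiction argument applies verbatim, infinite values included.
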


\begin{proof}
  Note first that ``$\geq$'' is always true whatever $C$, $D$ and $f$
  are. Thus there is nothing to prove if $\alpha:=\sup_{y\in
    D}\inf_{x\in C}f(x,y)=\infty$, hence we assume
  $\alpha<\infty$. 

  For any $y\in D$ and $\beta\in\RB$, we set $A_y^\beta:=\{x\in C:\,
  f(x,y)\leq \beta\}$. Then \citep[][Theorem~1]{MR764631} implies that
  the family $\{A^{\alpha+\varepsilon}_y\}_{y\in D}$ has the finite
  intersection property for every $\varepsilon>0$. Noting that each
  $A^{\alpha+\varepsilon}_y$ is compact by assumption made on $f$, we
  have $\bigcap_{y\in D}A_y^{\alpha+\varepsilon}\neq\emptyset$
  (indeed, fixing arbitrary $y_0\in D$, we have
  $A_{y_0}^{\alpha+\varepsilon}$ is compact,
  $A_y^{\alpha+\varepsilon}\cap A_{y_0}^{\alpha+\varepsilon}$ is its
  non-empty closed subset for each $y\in D$, and $\bigcap_{y\in
    D}A_y^{\alpha+\varepsilon}=\bigcap_{y\in
    D}(A_y^{\alpha+\varepsilon}\cap A_{y_0}^{\alpha+\varepsilon})\neq
  \emptyset$). But this is a necessary and sufficient condition for
  the equality (\ref{eq:Minimax1}) by \citep[][Theorem~2]{MR764631}.
\end{proof}

  \begin{proposition}
    \label{prop:Truncation}
    For a finite monotone convex function $\varphi$ with the Fatou
    property on a solid space $\Xs$ containing the constants, the
    Lebesgue property is equivalent to: for any countable net
    $(X_\alpha)_\alpha$,
    \begin{equation}
      \tag{\ref{eq:AproxBDD}}
      X_\alpha\in L^\infty,\, |X_\alpha|\leq |X|,\,\forall \alpha,\text{ and }\,X_\alpha\rightarrow X
      \text{ a.s. } \Rightarrow \, \varphi(X_\alpha)\rightarrow\varphi(X).
    \end{equation}
  \end{proposition}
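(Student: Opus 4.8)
The forward implication is immediate and uses nothing but order continuity: if $\varphi$ is Lebesgue and $(X_\alpha)_\alpha$ is a countable net with $X_\alpha\in L^\infty$, $|X_\alpha|\leq|X|$ and $X_\alpha\to X$ a.s., then $X_\alpha\to X$ in order (Remark~\ref{rem:Lebesgue-OrderConti}, the net being countable and dominated by $|X|\in\Xs_+$), whence $\varphi(X_\alpha)\to\varphi(X)$. So the content is entirely in the converse, and I would first record a simplification: condition (\ref{eq:AproxBDD}) is \emph{automatic when $X\geq0$}. Indeed then $|X_\alpha|\leq|X|=X$ forces $X_\alpha\leq X$, so monotonicity gives $\varphi(X_\alpha)\leq\varphi(X)$ while the Fatou property gives $\varphi(X)\leq\liminf_\alpha\varphi(X_\alpha)$. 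Thus the genuine information in (\ref{eq:AproxBDD}) concerns only \emph{signed} $X$, where the approximants are allowed to flip sign inside the band $|X|$; Example~\ref{ex:BadEx} shows this is sharp, the failure of the Lebesgue property there being detected precisely by such a signed configuration.

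To prove the converse I would assume Fatou and (\ref{eq:AproxBDD}), take $(X_n)_n\subset\Xs$ with $|X_n|\leq Y\in\Xs_+$ and $X_n\to X$ a.s., and pass to the order envelopes $\underline X_m:=\inf_{k\geq m}X_k$ and $\bar X_m:=\sup_{k\geq m}X_k$, which lie in $\Xs$ by solidity and order completeness and satisfy $\underline X_m\uparrow X\leq\bar X_m\downarrow X$ with $\underline X_m\leq X_n\leq\bar X_m$ for $n\geq m$. The increasing side is free: $\underline X_m\uparrow X$ dominated by $Y$ gives $\varphi(\underline X_m)\to\varphi(X)$ by Fatou and monotonicity, so $\liminf_n\varphi(X_n)\geq\varphi(X)$. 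Everything therefore reduces to the \emph{decreasing} side $\bar X_m\downarrow X\Rightarrow\varphi(\bar X_m)\to\varphi(X)$, i.e. to continuity from above, equivalently $\limsup_n\varphi(X_n)\leq\varphi(X)$.

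For this decreasing side I would route through the paper's dual machinery rather than argue by hand. First, since the Lebesgue property on $\Xs$ restricts to $L^\infty$, I would check that (\ref{eq:AproxBDD}) already forces $\varphi_\infty:=\varphi|_{L^\infty}$ to be Lebesgue: if some level set $\{\varphi_\infty^*\leq c\}$ failed to be uniformly integrable, the escaping densities $Z_k$ (with $\EB[Z_k\mathds1_{A_k}]\geq\varepsilon$ and $\PB(A_k)\to0$) would, after a Borel--Cantelli subsequence, feed a signed bounded limit together with a sign-reflecting net violating (\ref{eq:AproxBDD}); by Theorem~\ref{thm:JSTLinfty} this gives the Lebesgue property of $\varphi_\infty$. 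With Assumption~\ref{as:Standing} now available for $\varphi_0:=\varphi_\infty$, the objects $\hat\varphi$ and $M^{\hat\varphi}_u$ are defined, and Fatou together with continuity from below of $\hat\varphi$ (Lemma~\ref{lem:HatPhiMon1}) gives $\hat\varphi=\varphi$ on $\Xs_+$ and $\Xs\subset M^{\hat\varphi}$. It then remains to show $\Xs\subset M^{\hat\varphi}_u$; by Theorem~\ref{thm:MuCharact} this is equivalent to uniform integrability of $\{XZ:\varphi_\infty^*(Z)\leq c\}$ for every $X\in\Xs$ and $c>0$, which I would again extract from (\ref{eq:AproxBDD}) by the same reflection mechanism, now applied to the (possibly unbounded) $X\in\Xs$. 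Once $\Xs\subset M^{\hat\varphi}_u$ is known, Theorem~\ref{thm:MainLebExt1} identifies $\varphi$ with $\hat\varphi|_\Xs$ and transfers the Lebesgue property, settling the decreasing side.

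The main obstacle is exactly this conversion of the one-sided truncation continuity (\ref{eq:AproxBDD}) into uniform integrability, i.e. into continuity from above. Purely convex-analytic splittings are too lossy: decomposing $\bar X_m$ across a level set $\{Y\leq N\}$ reintroduces a positive tail term bounded by $\varphi(\alpha Y\mathds1_{\{Y>N\}})$, and the vanishing of this term as $N\to\infty$ is itself continuity from above for a \emph{positive} decreasing sequence, which is \emph{not} implied by Fatou alone (it fails in Example~\ref{ex:BadEx}). Since the only leverage of (\ref{eq:AproxBDD}) is through signed configurations, the quantitative reflection argument --- turning escaping dual mass into a sign-flipped net whose $\varphi$-values remain bounded away from $\varphi$ of the limit --- is where the real difficulty sits, and it is in essence the Jouini--Schachermayer--Touzi mechanism underlying Theorem~\ref{thm:JSTLinfty}.
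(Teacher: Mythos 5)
Your forward direction, the envelope reduction to continuity from above, and the observation that (\ref{eq:AproxBDD}) is vacuous for $X\geq 0$ are all correct, but the substance of your converse rests on two steps that you never actually carry out: (i) that (\ref{eq:AproxBDD}) forces $\varphi_\infty=\varphi|_{L^\infty}$ to be Lebesgue on $L^\infty$, and (ii) that (\ref{eq:AproxBDD}) forces $\Xs\subset M^{\hat\varphi}_u$, both delegated to an unspecified ``reflection mechanism''. This is not a routine verification: the constraint $|X_\alpha|\leq|X|$ ties the height of any sign-flipped spike to the depth of the bulk, so the natural configuration $X=-K$, $X_k=-K+2K\mathds1_{B_k}$ with $B_k=\bigcup_{j\geq k}A_k$ only yields $\varphi(X_k)\geq 2K\varepsilon-K\,\EB[Z_k]-c$; this closes when $\varphi$ is cash-invariant (so $\EB[Z_k]=1$ and one takes $K$ large), but for a general monotone convex $\varphi$ the masses $\EB[Z]$ vary over $\dom\varphi^*_\infty$ and $\varphi(-K)$ can decrease linearly in $K$ at a competing rate, so the inequality does not close and a genuinely new argument would be needed --- you yourself flag this as ``where the real difficulty sits'', which amounts to conceding the proof is missing. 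In addition, your concluding citation is circular: Theorem~\ref{thm:MainLebExt1}(2) identifies $\varphi$ with $\hat\varphi|_\Xs$ only for a pair $(\varphi,\Xs)$ already known to be a Lebesgue extension, which is exactly what you are trying to prove. (That particular step is repairable within your scheme: the truncations $(X\vee(-m))\wedge n$ \emph{do} satisfy $|(X\vee(-m))\wedge n|\leq|X|$, so (\ref{eq:AproxBDD}) together with the order continuity of $\hat\varphi$ on $M^{\hat\varphi}_u$ from Theorem~\ref{thm:MainLebExt1}(1) would give $\varphi(X)=\hat\varphi(X)$ on $\Xs$; this identification is the analogue of what the paper needs Lemma~\ref{lem:JST3} for in Theorem~\ref{thm:JSTGeneral}. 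But steps (i) and (ii) remain unproven.)

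The paper's own proof is, by contrast, completely elementary and uses none of the dual machinery. By Remark~\ref{rem:Lebesgue-OrderConti} the Lebesgue property reduces to sequential continuity from above; given $X_n\downarrow X$ in $\Xs$, one forms the countable net $X_{n,m}:=(X_n\vee(-n))\wedge m\in L^\infty$, checks directly that $X_{n,m}\stackrel{o}\rightarrow X$, applies (\ref{eq:AproxBDD}) to get $\lim_{(n,m)}\varphi(X_{n,m})=\varphi(X)$, and combines this with $\varphi(X_n)\leq\varphi(X_n\vee(-n))=\sup_m\varphi(X_{n,m})$ (continuity from below, i.e.\ Fatou plus monotonicity) to conclude $\inf_n\varphi(X_n)\leq\varphi(X)$. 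So your detour through Theorems~\ref{thm:JSTLinfty}, \ref{thm:MuCharact} and \ref{thm:MainLebExt1} is unnecessary even where it works. One caveat is worth recording: the paper's net is dominated by $|X_1|\vee|X|\in\Xs_+$ rather than by $|X|$, so the paper itself invokes (\ref{eq:AproxBDD}) in its order-convergence reading (approximating bounded nets order-converging to $X$, as in Remark~\ref{rem:Truncation}) rather than under the literal band constraint $|X_\alpha|\leq|X|$; it is your insistence on the literal constraint that pushed you onto the much harder dual route, and under that literal reading your proposal, as it stands, does not constitute a proof.
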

  \begin{proof}
    The necessity is clear from
    Remark~\ref{rem:Lebesgue-OrderConti}. Recall that the Lebesgue
    property of $\varphi$ is equivalent to the sequential continuity
    from above.  For a sequence $(X_n)_n\subset \Xs$ with
    $X_n\downarrow X\in\Xs$, consider a net
    $X_{n,m}:=(X_n\vee(-n))\wedge m$ with indices $(n,m)$ directed by
    $(n,m)\preceq(n',m')$ iff $n\leq n'$ and $m\leq m'$. Then
    $X_{n,m}\in L^\infty$ for each $(n,m)$ and
    $X_{n,m}\stackrel{o}\rightarrow X$ in $\Xs$. Indeed,
    $\limsup_{(n,m)}X_{n,m}=\inf_{(n,m)}\sup_{n'\geq n, m'\geq
      m}(X_{n'}\vee (-n'))\wedge m'=\inf_{(n,m)} X_n\vee (-n)=X$, and
    $\liminf_{(n,m)}X_{n,m}=\sup_{(n,m)}\inf_{n'\geq n,m'\geq m}
    (X_{n'}\vee (-n'))\wedge m'=\sup_{(n,m)}X\wedge m=X$. Therefore
    $\varphi(X)=\lim_{(n,m)}\varphi(X_{n,m})$ by
    (\ref{eq:AproxBDD}). On the other hand,
    $\varphi(X_n)\leq\varphi(X_n\vee (-n)) =\sup_m\varphi((X_n\vee
    -n)\wedge m)$ by Fatou and monotonicity, thus
  \begin{align*}
    \inf_n\varphi(X_n)&\leq \inf_n\sup_m \varphi((X_n\vee -n)\wedge
    m)=\lim_n\lim_m\varphi((X_n\vee -n)\wedge m)\\
    &=\lim_{(n,m)}\varphi(X_{n,m})=\varphi(X).
  \end{align*}
  Hence $\varphi$ has the Lebesgue property.
\end{proof}


\begin{thebibliography}{28}
\providecommand{\natexlab}[1]{#1}
\providecommand{\url}[1]{\texttt{#1}}
\providecommand{\urlprefix}{URL }
\providecommand{\eprint}[2][]{\url{#2}}

\bibitem[{Acciaio and Goldammer(2013)}]{MR3035989}
Acciaio, B. and V.~Goldammer (2013): Optimal portfolio selection via
  conditional convex risk measures on {$L^p$}.
\newblock \textit{Decis. Econ. Finance} \textbf{36}, 1--21.

\bibitem[{Aliprantis and Border(2006)}]{aliprantis_border06}
Aliprantis, C.~D. and K.~C. Border (2006): Infinite dimensional analysis: A
  hitchhiker's guide.
\newblock Springer, Berlin, 3rd ed.

\bibitem[{Aliprantis and Burkinshaw(2003)}]{aliprantis_Burkinshaw03}
Aliprantis, C.~D. and O.~Burkinshaw (2003): Locally solid Riesz spaces with
  applications to economics, \textit{Mathematical Surveys and Monographs}, vol.
  105.
\newblock American Mathematical Society, Providence, RI, 2nd ed.
\newblock {xii}+344 pp.

\bibitem[{Arai(2010)}]{MR2659479}
Arai, T. (2010): Convex risk measures on {O}rlicz spaces: inf-convolution and
  shortfall.
\newblock \textit{Math. Financ. Econ.} \textbf{3}, 73--88.

\bibitem[{Arai(2011)}]{MR2756015}
Arai, T. (2011): Good deal bounds induced by shortfall risk.
\newblock \textit{SIAM J. Financial Math.} \textbf{2}, 1--21.

\bibitem[{Artzner et~al.(1999)Artzner, Delbaen, Eber and Heath}]{MR1850791}
Artzner, P., F.~Delbaen, J.-M. Eber and D.~Heath (1999): Coherent measures of
  risk.
\newblock \textit{Math. Finance} \textbf{9}, 203--228.

\bibitem[{Biagini and Frittelli(2009)}]{MR2648595}
Biagini, S. and M.~Frittelli (2009): On the extension of the {N}amioka-{K}lee
  theorem and on the {F}atou property for risk measures.
\newblock In: Optimality and risk---modern trends in mathematical finance,
  Springer, Berlin, pp. 1--28.

\bibitem[{Cheridito et~al.(2005)Cheridito, Delbaen and Kupper}]{MR2211713}
Cheridito, P., F.~Delbaen and M.~Kupper (2005): Coherent and convex monetary
  risk measures for unbounded c\`adl\`ag processes.
\newblock \textit{Finance Stoch.} \textbf{9}, 369--387.

\bibitem[{Cheridito and Li(2009)}]{MR2509268}
Cheridito, P. and T.~Li (2009): Risk measures on {O}rlicz hearts.
\newblock \textit{Math. Finance} \textbf{19}, 189--214.

\bibitem[{Delbaen(2009{\natexlab{a}})}]{MR2648597}
Delbaen, F. (2009{\natexlab{a}}): Differentiability properties of utility
  functions.
\newblock In: Optimality and risk---modern trends in mathematical finance,
  Springer, Berlin, pp. 39--48.

\bibitem[{Delbaen(2009{\natexlab{b}})}]{MR2509290}
Delbaen, F. (2009{\natexlab{b}}): Risk measures for non-integrable random
  variables.
\newblock \textit{Math. Finance} \textbf{19}, 329--333.

\bibitem[{Delbaen(2012)}]{delbaen12:_monet_utilit_funct}
Delbaen, F. (2012): Monetary Utility Functions, \textit{Osaka University CSFI
  Lecture Notes Series}, vol.~3.
\newblock Osaka University Press.

\bibitem[{Filipovi{\'c} and Svindland(2012)}]{MR2943186}
Filipovi{\'c}, D. and G.~Svindland (2012): The canonical model space for
  law-invariant convex risk measures is {$L^1$}.
\newblock \textit{Math. Finance} \textbf{22}, 585--589.

\bibitem[{F{\"o}llmer and Gundel(2006)}]{MR2247836}
F{\"o}llmer, H. and A.~Gundel (2006): Robust projections in the class of
  martingale measures.
\newblock \textit{Illinois J. Math.} \textbf{50}, 439--472 (electronic).

\bibitem[{F{\"o}llmer and Schied(2002)}]{MR1932379}
F{\"o}llmer, H. and A.~Schied (2002): Convex measures of risk and trading
  constraints.
\newblock \textit{Finance Stoch.} \textbf{6}, 429--447.

\bibitem[{F{\"o}llmer and Schied(2011)}]{MR2779313}
F{\"o}llmer, H. and A.~Schied (2011): Stochastic finance.
\newblock Walter de Gruyter \& Co., Berlin, 3rd ed.
\newblock An introduction in discrete time.

\bibitem[{Frittelli and Rosazza~Gianin(2002)}]{FrittelliRosazza20021473}
Frittelli, M. and E.~Rosazza~Gianin (2002): Putting order in risk measures.
\newblock \textit{Journal of Banking \& Finance} \textbf{26}, 1473 -- 1486.

\bibitem[{Frittelli and Rosazza~Gianin(2005)}]{MR2766847} Frittelli,
  M. and E.~Rosazza~Gianin (2005): Law invariant convex risk measures.
  \newblock \textit{Adv.  Math. Econ.}, \textbf{7}, 33--46.

\bibitem[{Grothendieck(1973)}]{MR0372565} Grothendieck, A. (1973):
  Topological vector spaces.  \newblock Gordon and Breach Science
  Publishers, New York.  \newblock Translated from French by Orlando
  Chaljub, Notes on Mathematics and its Applications.

\bibitem[{Jo{\'o}(1984)}]{MR764631}
Jo{\'o}, I. (1984): Note on my paper: ``{A} simple proof for von {N}eumann's
  minimax theorem'' [{A}cta {S}ci. {M}ath. ({S}zeged) {\bf 42} (1980), no. 1-2,
  91--94; {MR}0576940 (81i:49008)].
\newblock \textit{Acta Math. Hungar.} \textbf{44}, 363--365.

\bibitem[{Jouini et~al.(2006)Jouini, Schachermayer and
  Touzi}]{jouini06:_law_fatou}
Jouini, E., W.~Schachermayer and N.~Touzi (2006): Law invariant risk measures
  have the fatou property.
\newblock \textit{Adv. Math. Econ.} \textbf{9}, 49--71.

\bibitem[{Kaina and R{\"u}schendorf(2009)}]{MR2507760}
Kaina, M. and L.~R{\"u}schendorf (2009): On convex risk measures on
  {$L^p$}-spaces.
\newblock \textit{Math. Methods Oper. Res.} \textbf{69}, 475--495.

\bibitem[{Kupper and Svindland(2011)}]{MR2823052}
Kupper, M. and G.~Svindland (2011): Dual representation of monotone convex
  functions on {$L^{0}$}.
\newblock \textit{Proc. Amer. Math. Soc.} \textbf{139}, 4073--4086.

\bibitem[{Kusuoka(2001)}]{MR1886557} Kusuoka, S. (2001): On law
  invariant coherent risk measures.  \newblock \textit{Adv. Math.
    Econ.}, \textbf{3}, 83--95.

\bibitem[{Moreau(1964)}]{MR0160093}
Moreau, J.-J. (1964): Sur la fonction polaire d'une fonction semi-continue
  sup\'erieurement.
\newblock \textit{C. R. Acad. Sci. Paris} \textbf{258}, 1128--1130.

\bibitem[{Orihuela and {Ruiz
  Gal\'an}(2012{\natexlab{a}})}]{orihuelaRuizGalan12:_james}
Orihuela, J. and M.~{Ruiz Gal\'an} (2012{\natexlab{a}}): {A coercive James's
  weak compactness theorem and nonlinear variational problems.}
\newblock \textit{Nonlinear Anal., Theory Methods Appl., Ser. A, Theory
  Methods} \textbf{75}, 598--611.

\bibitem[{Orihuela and {Ruiz
  Gal\'an}(2012{\natexlab{b}})}]{orihuela_ruiz12:_lebes_orlic}
Orihuela, J. and M.~{Ruiz Gal\'an} (2012{\natexlab{b}}): Lebesgue property of
  convex risk measures on {Orlicz} spaces.
\newblock \textit{Math. Financ. Econ.} \textbf{6}, 15--35.

\bibitem[{Owari(2013)}]{owari13:_lebes_monot_convex_funct}
Owari, K. (2013): On the {Lebesgue} property of monotone convex functions.
\newblock Preprint.

\end{thebibliography}
\end{document}